\documentclass[11pt]{article}

\usepackage{amsmath,amssymb,amsfonts}
\usepackage{amsthm}
\usepackage[pagewise]{lineno}

\usepackage{lmodern}
\numberwithin{equation}{section}
\newtheorem{theorem}{Theorem}[section]
\newtheorem{definition}{Definition}[section]
\newtheorem{proposition}{Proposition}[section]
\newtheorem{lemma}{Lemma}[section]
\newtheorem{remark}{Remark}[section]

\begin{document}

\author{Qiang Gao$^{1}$, Xiaoyan Zhang$^{1,}$\footnote{\small  Corresponding Author. Email: zxysd@sdu.edu.cn} \\
{\small $^1$ School of Mathematics, Shandong University, Jinan 250100, P.R. China}
}

\title{Normalized solutions of quasilinear Schr\"odinger equations in the general $L^2$-supercritical case}
\date{}
\maketitle

\begin{abstract}
This paper is devoted to studying the existence of normalized solutions for the following quasilinear Schr\"odinger equation
\begin{equation*}
\begin{aligned}
   -\Delta u-u\Delta u^2 +\lambda u=h(u)   \quad\mathrm{in}\ \mathbb{R}^{3},
\end{aligned}
\end{equation*}
where $\lambda$ appears as a Lagrange multiplier, $h$ is a $L^2$-supercritical and Sobolev subcritical nonlinearity. The solutions correspond to critical points of the energy functional subject to  the $L^2$-norm constraint $\int_{\mathbb{R}^3}|u|^2dx=a^2>0$. Taking into account the Pohozaev manifold and
perturbation method, we obtain the existence of ground state normalized solutions and infinitely many normalized solutions. Moreover, our results cover several relevant existing results in \cite{LZ2023}. And in the end, we get the asymptotic properties of energy as $a$ tends to $+\infty$ and $a$ tends to $0^+$.
\end{abstract}

 \bigskip
{\bf Keywords:}  {\small Variational methods; Quasilinear Schr\"odinger equation; Ground state; Multiple solutions; Normalized solution}

 \bigskip
{\bf Mathematics Subject Classification.} {\small 30C70, 35J10, 35J20, 35Q55.}

\section{Introduction}\label{intro}
In this paper, we are concerned with the following quasilinear Schr\"odinger equation
\begin{equation}\label{eq01}
 -\Delta u-u\Delta u^2 +\lambda u=h(u)   \quad\mathrm{in}\ \mathbb{R}^{3},
\end{equation}
under the constraint
\begin{equation}\label{eq02}
\int_{\mathbb{R}^3}|u|^2dx=a^2,
\end{equation}
where $a>0$, $\lambda\in \mathbb{R}$ is a Lagrange multiplier.

The interest in studying \eqref{eq01}-\eqref{eq02} comes from seeking the standing wave of the following time-dependent quasilinear Schr\"odinger equation
\begin{equation}\label{eq03}
i\partial_{t}\psi=\Delta\psi+ \psi \Delta(|\psi|^2)+ g(|\psi|)\psi,
\end{equation}
where $i$ denotes the imaginary unit, $\psi=\psi(t,x) \in \mathbb{C}$ is the wave function, $g$ is an approprite nonlinearity. Equations of the form \eqref{eq03} have been involved in models of superfluid films in fluid mechanics and plasma physics \cite{K1981, LS1978, PG1976}. And the equations play an important role in dissipative quantum mechanics, condensed matter theory, the theory of Heisenberg ferromagnets and magnons. As for more details on physical background of \eqref{eq03}, we refer the readers to \cite{BN1990,H1980,KIK1990,MF1984,QC1982}.

Throughout the paper, we consider standing  wave solutions to \eqref{eq03}, which are solutions of the form
$$\psi(t,x)=e^{-i\lambda t}u(x),$$
where $u \in H^1(\mathbb{R}^3)$, the frequency $\lambda \in \mathbb{R}$, then \eqref{eq03} can be transformed to \eqref{eq01} with $h(u)=g(|u|)u$.

Generally speaking, there are two  different points  concerning the form of standing wave solutions: one can either prescribe the frequency $\lambda$ or the $L^2$-norm $\|u\|_2$. In contrast with the search of solutions to \eqref{eq03} when the frequency $\lambda \in \mathbb{R}$ is a prior given, the search of normalized solutions becomes more complex. In the normalized setting, it is natural to prescribe the value of the mass $\int_{\mathbb{R}^N}|u|^2dx$ so that $\lambda$ can be interpreted as the Lagrange multiplier. Naturally, for $h(t)=|t|^{p-2}t$, a new critical exponent appears, the $L^2$-critical exponent (also named mass-critical exponent): $p:=4+\frac{4}{N}$. It is the threshold exponent for many dynamical properties, such as global existence vs. blow-up, and the stability or instability of ground states. For further clarification, we agree that $L^2$-subcritical case and $L^2$-supercritical case mean that $p<4+\frac{4}{N}$ and $p>4+\frac{4}{N}$, respectively. Alternatively, the mass often admits a clear physical meaning: it represents the power supply in nonlinear optics, or the total number of atoms in Bose-Einstein condensation. They are two main fields of application of the NLS.

It is well known that the solutions for \eqref{eq01} admitting prescribed $L^2$-norm \eqref{eq02} are the critical points of the energy functional
$$I(u):=\frac{1}{2} \int_{\mathbb{R}^3} |\nabla u|^2 + \int_{\mathbb{R}^3} |u|^2 |\nabla u|^2 -\int_{\mathbb{R}^3} H(u) $$
restricted on $\mathcal{S}'(a):=\left\{u \in H^1(\mathbb{R}^3)\;| \;\int_{\mathbb{R}^3}|u|^2|\nabla u|^2 <+\infty, \int_{\mathbb{R}^3}|u|^2=a^2\right\}$, where $H(t):=\int_0^th(s)ds$. At this time, the frequency $\lambda$ is an unknown number that can be determined as the Lagrange multiplier associated with the constraint $\mathcal{S}'(a)$. In addition, it is quite meaningful to study normalized solutions. This is not only because mass is conserved along the trajectories of \eqref{eq03}, i.e.,
$$\int_{\mathbb{R}^3}|\psi(t,x)|^2 dx= \int_{\mathbb{R}^3}|\psi(0,x)|^2 dx$$
for all $t>0$, but also it can provide a good insight into the dynamical properties (such as, orbital stability and instability) of solutions to the equation \eqref{eq03}.

We introduce some results about the existence of normalized solutions. Jeanjean \cite{J1997} considered the semilinear Schr\"odinger equation
\begin{equation}\label{eq1.4}
-\triangle u +\lambda u =g(u),\;\;\; x \in \mathbb{R}^N,
\end{equation}
where $N \geq 1$, $\lambda \in \mathbb{R}$. By using a minimax procedure, Jeanjean showed that for each $a>0$, \eqref{eq1.4} possesses at least one couple $(u_a,\lambda_a)$ of weak
solution with $\|u_a\|_2=a$ for $N\geq 2$. At the same time, he obtained the existence of ground states for $N\geq 1$. But, afterward, there was little progress about the study of normalized solutions for
a long time. One of the main reasons is that it is hard to prove the boundedness of
the constrained Palais-Smale sequence when the functional is unbounded from below on
the constraint manifold. More recently, problems of such type begun to receive much
attention. By virtue of a fountain theorem type argument, Bartsch and de Valeriola \cite{Bd2013} got a multiplicity result of \eqref{eq1.4} with $\|u\|_2=a>0$. Soave \cite{S2020} studied the existence and
properties of ground states to the nonlinear Schr\"odinger equation with combined
power nonlinearities. For more results, we refer the readers to \cite{LZ2023-2, LZ2024, ZhangZhitao, N2020} and their references therein.

Compared to \eqref{eq1.4} where the term $u\Delta u^2$ is not present, the search of solutions of \eqref{eq01} and \eqref{eq02} presents a major difficulty. The functional associated with the quasilinear term $\int_{\mathbb{R}^3} |u|^2 |\nabla u|^2 $
is nondifferentiable in $\{u \in H^1(\mathbb{R}^3)\;| \;\int_{\mathbb{R}^3}|u|^2|\nabla u|^2 <+\infty \}$. Since the parameter $\lambda$ is unknown and $\|u\|_2$ is equal to a constant, we find that Nehari manifold approach \cite{LWW2004, LLW2013} and changing variables \cite{CJ2004, LWW2003} are no longer applicable. To overcome this difficulty, various arguments have been developed. One of the most important tasks is that Jeanjean, Luo and Wang \cite{JLW2015} introduced the perturbation method to deal with the normalized solutions of quasilinear Schr\"odinger equation.

As far as I know, there are relatively few results on \eqref{eq01} and \eqref{eq02}. In  \cite{CJS2010, JL2013}, the authors studied the minimization problem $\tilde{m}(a)=\inf_{u \in \mathcal{S}'(a)}\left\{\frac{1}{2} \int_{\mathbb{R}^N} |\nabla u|^2 + \int_{\mathbb{R}^N} |u|^2 |\nabla u|^2 -\frac{1}{p}\int_{\mathbb{R}^N} |u|^p\right\}$ with $2<p\leq 4+\frac{4}{N}$. In addition, the authors \cite{ZZ2018} considered the existence and asymptotic behavior of the minimizers to $\tilde{m}(a)=\inf_{u \in \mathcal{S}'(a)}\left\{\frac{1}{2} \int_{\mathbb{R}^N} \left(|\nabla u|^2 + V(x)|u|^2\right)+\int_{\mathbb{R}^N} |u|^2 |\nabla u|^2 -\frac{1}{p}\int_{\mathbb{R}^N} |u|^p\right\}$ with $2<p\leq 4+\frac{4}{N}$, where $V(x)$ is an infinite potential well. In a word, most of the results on the normalized solution of \eqref{eq01} and \eqref{eq02} have been related to $L^2$-subcritical case and $L^2$-critical case. Specially, Li and Zou \cite{LZ2023} have considered  $L^2$-supcritical case with $h(t)=|t|^{p-2}t, \ p>4+\frac{4}{N}$, Mao and Lu \cite{2024New} have studied combination case of $L^2$-subcritical and $L^2$-supercritical
nonlinearities with $h(t)=\tau|t|^{q-2}t+|t|^{p-2}t$, $\tau>0$, $2<q<2+\frac{4}{N}$ and $p>4+\frac{4}{N}$. Recently, Jeanjean, Zhang and Zhong \cite{ZZJ2025} have given a Schwarz symmetric ground state solution for $h(t)=|t|^{p-2}t$ and $p>4+\frac{4}{N}$. For more results, we refer the readers to \cite{HW2026, JY2026} and their references therein.

In this paper, we set $N=3$, take the general $h$ and make the following assumptions on $h$:   \\
$(h_1)\;  h \in \mathcal{C}(\mathbb{R},\mathbb{R}),h(-t)=-h(t)$;   \\
$(h_2)\;  \lim_{t \rightarrow 0} \frac{h(t)}{|t|^{\frac{13}{3}}} =0$;  \\
$(h_3)\;  \lim_{|t| \rightarrow \infty} \frac{h(t)}{|t|^{5}} =0$;   \\
$(h_4)\;  \lim_{|t| \rightarrow \infty} \frac{H(t)}{|t|^{\frac{16}{3}}} =+\infty $;    \\
$(h_5) \; \tilde{h}(t)t>\frac{16}{3}\tilde{H}(t)$ for any $t\neq0$, where $\tilde{H}(t):=h(t)t-2H(t) \in \mathcal{C}^1(\mathbb{R},\mathbb{R})$, $\tilde{h}(t):=\tilde{H}'(t)$;   \\
$(h_6)\;  h(t)t < 6H(t)$ for any $t\neq 0$;   \\
$(h_7)\;  \lim_{|t| \rightarrow 0} \frac{h(t)t}{|t|^{6}} =+\infty.$

\begin{remark}
   During the proof, we find that $\tilde{h}(t)t>p\tilde{H}(t)$ with $p \in \left(\frac{16}{3},6\right)$ is easy for us to handle. However, the coerciveness of energy functional becomes more cumbersome  for $p=\frac{16}{3}$ in assumption $(h_5)$.
\end{remark}

\begin{remark}
(1) It follows from $(h_5)$ and
$$\left[\frac{\tilde{H}(t)}{|t|^{\frac{16}{3}}}\right]'=\frac{\tilde{h}(t)t-\frac{16}{3}\tilde{H}(t)} {|t|^{\frac{19}{3}}} {\rm sgn}(t)$$
that:

$(h_5')\;  t \mapsto \frac{\tilde{H}(t)}{|t|^{\frac{16}{3}}}$ is strictly decreasing on $(-\infty,0)$ and strictly increasing on $(0,+\infty) $.

(2) We point out that the function $h(t)=|t|^{p-2}t$ with $\frac{16}{3} <p<6$ satisfies the above conditions $(h_1)-(h_7)$.
\end{remark}
We are now in a position to state our main results.

\begin{theorem}\label{Thm1.1}
Under the hypotheses $(h_1)-(h_6)$, equations \eqref{eq01} and \eqref{eq02} admit a radially symmetric positive ground state normalized solution $u \in H^1(\mathbb{R}^3)\cap L^{\infty}(\mathbb{R}^3)$.
\end{theorem}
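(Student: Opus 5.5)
We will follow the perturbation approach of Jeanjean, Luo and Wang \cite{JLW2015}, combined with a Pohozaev-manifold minimization. For $\mu\in(0,1]$ and some fixed $\theta\in(6,\frac{20}{3})$ (so that the perturbation dominates the quasilinear term at large scales but is still $L^2$-supercritical in the scaling below), we consider the perturbed functional
\[
I_\mu(u)=\frac{\mu}{\theta}\int_{\mathbb{R}^3}|\nabla u|^\theta+\frac12\int_{\mathbb{R}^3}|\nabla u|^2+\int_{\mathbb{R}^3}|u|^2|\nabla u|^2-\int_{\mathbb{R}^3}H(u)
\]
on $\mathcal{S}(a)=\{u\in X:=W^{1,\theta}(\mathbb{R}^3)\cap H^1(\mathbb{R}^3):\|u\|_2=a\}$. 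This functional is $C^1$ on $X$. We use the mass-preserving dilation $(t\star u)(x)=t^{3/2}u(tx)$. Under it the gradient term scales like $t^2$, the quasilinear term like $t^5$, and the perturbation like $t^{\theta(1+3/2)-3}$. We define the Pohozaev functional $P_\mu(u)=\frac{d}{dt}I_\mu(t\star u)|_{t=1}$, the manifold $\mathcal{P}_\mu(a)=\{u\in\mathcal{S}(a):P_\mu(u)=0\}$, and $m_\mu(a)=\inf_{\mathcal{P}_\mu(a)}I_\mu$.

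\textbf{Step 1: fibering structure.} We show that for each $u\in\mathcal{S}(a)$ the map $t\mapsto I_\mu(t\star u)$ has a unique critical point $t_u$, which is a strict global maximum. The ingredients are as follows. By $(h_2)$ the map is positive for small $t$. By $(h_4)$ it tends to $-\infty$: indeed $H(t^{3/2}s)t^{-3}$ grows faster than $t^{5}$, the exponent $\frac{16}{3}$ being exactly the $L^2$-critical exponent for the $t^5$ scaling. Uniqueness comes from $(h_5')$, i.e. $\tilde H(s)/|s|^{16/3}$ is increasing, which makes $t^{-5}\int\tilde H(t^{3/2}u)t^{-3}$ increasing. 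The borderline exponent $\frac{16}{3}$ in $(h_5)$ is precisely what forces the quasilinear term to be matched exactly, while the perturbation exponent, which exceeds 5 after scaling, must be controlled separately. We then deduce that $m_\mu(a)=\inf_u\max_t I_\mu(t\star u)>0$ and that it is bounded below uniformly in $\mu$. The lower bound uses Gagliardo–Nirenberg for $\int|u|^{2}|\nabla u|^2=\frac14\int|\nabla u^2|^2$, applied together with $(h_2)$ and $(h_3)$.

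\textbf{Step 2: existence for $\mu>0$.} We restrict to radial functions, using $(h_1)$ and symmetrization if needed. We build a Palais–Smale sequence for $I_\mu|_{\mathcal{S}(a)}$ at level $m_\mu(a)$ with $P_\mu(u_n)\to0$, via Jeanjean's augmented functional $(t,u)\mapsto I_\mu(t\star u)$. We then prove boundedness in $X$. This is where $(h_5)$ with the critical exponent $\frac{16}{3}$ enters, and it is the main obstacle: we must combine $I_\mu-\frac{3}{16}\cdot(\text{suitable multiple})P_\mu$ so that the $H$-terms form $\int(\tilde h u-\frac{16}{3}\tilde H)\ge0$ while all gradient terms keep positive coefficients. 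Since the $\frac{16}{3}$-combination exactly cancels the quasilinear term, we plan to recover control of $\int|u|^2|\nabla u|^2$ from the leftover $\int|\nabla u|^2$ and $\mu\int|\nabla u|^\theta$ pieces, together with the energy bound and a contradiction argument (assume the norm blows up, rescale, and use $(h_3)$). Compactness then follows from radial Strauss embeddings and $(h_2)$–$(h_3)$. We show that the Lagrange multiplier satisfies $\lambda_\mu>0$ by using $(h_6)$ in the Pohozaev identity combined with the Nehari identity; this also rules out vanishing of the mass in the limit. The result is a minimizer $u_\mu\ge0$ (replace $u$ by $|u|$), with $I_\mu(u_\mu)=m_\mu(a)$.

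\textbf{Step 3: $\mu\to0^+$.} We have $m_\mu(a)\to m(a)$, monotone in $\mu$ and bounded above by testing with fixed smooth functions. We derive bounds independent of $\mu$ on $\|u_\mu\|_{H^1}$, on $\int u_\mu^2|\nabla u_\mu|^2$, on $\mu\|\nabla u_\mu\|_\theta^\theta$, and on $\lambda_\mu$ (bounded above and away from $0$). A Moser iteration, as in \cite{JLW2015}, then gives $\|u_\mu\|_\infty\le C$ uniformly. We pass to the limit in the weak formulation with test functions $\varphi$ and $u_\mu\varphi$, obtaining $u$ solving \eqref{eq01} with $\lambda>0$. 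We get $\|u\|_2=a$ by radial compactness and $\lambda>0$, and $I(u)=m(a)=\inf\{I(v):v\in\mathcal{S}'(a)\text{ solves \eqref{eq01}}\}$, since every solution lies on the limiting Pohozaev set. Finally, positivity follows from the strong maximum principle applied to the equation with bounded coefficients.
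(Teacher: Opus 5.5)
Your choice $\theta\in(6,\frac{20}{3})$ reverses the geometry the whole argument relies on. Under $t\star u=t^{3/2}u(t\,\cdot)$ the perturbation scales like $t^{\frac{5\theta}{2}-3}$, and for your $\theta$ this exponent lies in $(12,\frac{41}{3})$. By $(h_2)$--$(h_3)$, however, $|H(s)|\le\varepsilon|s|^6+C_\varepsilon|s|^{16/3}$, so $t^{-3}\int_{\mathbb{R}^3}|H(t^{3/2}u)|\le\varepsilon t^6\|u\|_6^6+C_\varepsilon t^5\|u\|_{16/3}^{16/3}$. Hence for every $\mu>0$ and $u\neq0$ you get $I_\mu(t\star u)\to+\infty$ as $t\to\infty$. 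Condition $(h_4)$ beats the $t^5$ quasilinear term but never the perturbation, so this cannot be ``controlled separately''.

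The consequences are fatal for Step 1. First, $\max_t I_\mu(t\star u)=+\infty$, so your minimax level $\inf_u\max_t I_\mu(t\star u)$ is $+\infty$. Second, for fixed $u$ and small $\mu$ the fiber dips below zero (because $I(t\star u)\to-\infty$) and then turns back up, so it has both a local maximum and a local minimum at negative level. Uniqueness via $(h_5')$ fails because in $t^{-5}P_\mu(t\star u)$ the perturbation contributes a positive multiple of $\mu\,t^{\frac{5\theta}{2}-8}\int|\nabla u|^\theta$, which is increasing in $t$. Third, $\inf_{\mathcal{P}_\mu(a)}I_\mu$ becomes negative and tends to $-\infty$ as $\mu\to0^+$.

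Step 2 breaks as well. In $I_\mu-\frac15P_\mu$, the combination that cancels $\int|u|^2|\nabla u|^2$, the coefficient of $\mu\int|\nabla u|^\theta$ is $\frac{16-5\theta}{10\theta}<0$.

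What is needed is the opposite of what you aimed for: the perturbation must be \emph{dominated} by the quasilinear term, i.e.\ $\frac{5\theta}{2}-3<5$, i.e.\ $\theta<\frac{16}{5}$. The paper takes $\theta\in(\frac{12}{5},3)$; the lower bound keeps $\int|u|^2|\nabla u|^2$ finite and $C^1$ on $W^{1,\theta}\cap H^1$. The positivity of $5-\theta(1+\gamma_\theta)$ is exactly what drives Lemmas \ref{lem2.3}, \ref{lem2.4}, \ref{lem2.5} and \ref{lem-P}. With that choice your outline essentially follows the paper's: Jeanjean's augmented functional, then coercivity on $\mathcal{P}_\mu(a)$ via Lemma \ref{lem2.1} plus a vanishing/non-vanishing argument (the non-vanishing case uses $(h_4)$, not $(h_3)$), then $\lambda_\mu>0$ from $(h_6)$, Moser iteration, and $\mu\to0^+$.

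Two further steps need repair even with the right $\theta$.

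First, $m_\mu(a)\to m(a)$ does not follow from testing with fixed smooth functions. That only bounds $\lim_{\mu\to0^+}m_\mu(a)$ by the infimum of $I$ over smooth elements of $\mathcal{P}'(a)$. To reach $m(a)$ you would need to approximate elements of $\mathcal{P}'(a)$ by functions in $W^{1,\theta}\cap H^1$ with convergence of $\int|u|^2|\nabla u|^2$, together with continuity of the fiber projection in that topology. The paper avoids this. It works with $b(a)=\lim_{\mu\to0^+}m_\mu(a)$, gets a solution at level $b(a)$ (so $\sigma(a)\le b(a)$), and obtains the ground state by minimizing over a sequence of \emph{solutions} and re-running the compactness of Lemma \ref{lem4.1}. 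It claims only $m(a)\le\sigma(a)$, so your identification $I(u)=m(a)=\sigma(a)$ is stronger than what is proved there.

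Second, when passing to the limit, the term $\int u_\mu\varphi|\nabla u_\mu|^2$ is quadratic in the gradient. It cannot be handled by testing with $\varphi$ and $u_\mu\varphi$ under mere weak $H^1$ convergence. The paper instead tests with $\psi e^{\mp u_{\mu_n}}$, $\psi\ge0$, which makes the quadratic gradient terms appear with a fixed sign. Weak lower semicontinuity then gives two opposite inequalities, and only afterwards does the paper recover convergence of the energies from $P(u)=0$.
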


\begin{remark}
    Both Theorem 1.3 (vi) in \cite{DSZZ} and Theorem \ref{Thm1.1} in our paper establish the existence of positive radially symmetric normalized solutions of quasilinear Schr\"odinger equations in the general $L^2$-supercritical case. However, the differences are also evident:  \\
	(1) The normalized solution in Theorem \ref{Thm1.1} of our paper is, in particular, a ground state solution; whereas Theorem 1.3 (vi) in \cite{DSZZ} does not further obtain such a special character of the solution.\\
	(2) The proof strategies of the two results are completely different. Theorem \ref{Thm1.1} in our paper mainly employs a perturbation method to overcome the non-differentiability of the quasilinear term, and the proof is carried out directly for the quasilinear equation itself. In contrast, Theorem 1.3 (vi) in \cite{DSZZ} proves the result by transforming the quasilinear equation into a semilinear one.\\
	(3) Apart from this single theorem in dispute, the contents of the two papers are entirely different: our paper also establishes the existence of infinitely many normalized solutions in the mass supercritical case and derives the asymptotic properties of the energy, none of which are addressed in \cite{DSZZ}.

\end{remark}

\begin{theorem}\label{Thm1.2}
Under the hypotheses $(h_1)-(h_6)$, equations \eqref{eq01} and \eqref{eq02} admit a sequence of normalized solutions $u^j \in H^1(\mathbb{R}^3)\cap L^{\infty}(\mathbb{R}^3) $ with increasing energy $I(u^j)\rightarrow +\infty$.
\end{theorem}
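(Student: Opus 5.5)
We will prove Theorem \ref{Thm1.2} by the perturbation method of Jeanjean, Luo and Wang \cite{JLW2015} combined with a symmetric minimax scheme on the constraint. The scheme is in the spirit of Bartsch--de Valeriola \cite{Bd2013} and of the multiplicity part of \cite{LZ2023}.

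\textbf{Perturbed problem.} For $\mu\in(0,1]$ and $\theta\in(4,6)$ close to $6$ (we take $\theta>\frac{16}{3}$), consider the perturbed functional
$$I_\mu(u)=\frac{\mu}{\theta}\int_{\mathbb{R}^3}|\nabla u|^\theta+I(u)$$
on $\mathcal{S}_r(a)=\{u\in W_r:=H^1_r(\mathbb{R}^3)\cap W^{1,\theta}_r(\mathbb{R}^3),\ \|u\|_2=a\}$. This functional is $\mathcal{C}^1$ and even. To compensate for the lack of compactness of the unconstrained problem, we pass to $\tilde I_\mu(s,u)=I_\mu(s\star u)$ on $\mathbb{R}\times\mathcal{S}_r(a)$, where $s\star u=e^{3s/2}u(e^s\cdot)$. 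This lets us produce Palais--Smale sequences carrying the almost Pohozaev information $P_\mu(u_n)\to0$.

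\textbf{Minimax levels.} Take a sequence of finite-dimensional odd subspaces $V_j\subset W_r$ and the family $\Gamma_j$ of odd continuous maps $\gamma:\mathbb{S}^{j-1}\to\mathcal{S}_r(a)$ (or, following \cite{Bd2013}, sets of genus $\ge j$). Set
$$c_{\mu,j}=\inf_{A\in\Gamma_j}\max_{u\in A}I_\mu(u).$$
Two properties are needed. First, $c_{\mu,j}$ is finite and bounded above independently of $\mu$, via $(h_4)$: on the compact set $\{s\star u: u\in V_j\cap\mathcal{S}(a)\}$, dilation drives $I_\mu$ to $-\infty$ uniformly. Second, $c_{\mu,j}\ge b_j\to+\infty$ with $b_j$ independent of $\mu$. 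For the lower bound, we use the intersection lemma with the sets $\{u\in\mathcal{S}_r(a):\|\nabla u\|_2=\rho_j\}\cap V_{j-1}^{\perp}$. On $V_{j-1}^\perp$, the quantity $\beta_j=\sup_{u\in V_{j-1}^\perp,\,\|u\|=1}\|u\|_{q}\to0$, and $(h_2)$--$(h_3)$ with Gagliardo--Nirenberg give $\int H(u)\le \varepsilon\|u\|_{6}^6+C\beta_j\cdot(\dots)$. Choosing $\rho_j\to\infty$ suitably then yields $I\ge \frac14\rho_j^2\to\infty$ on these sets, independently of $\mu$. The $\mathbb{Z}_2$-equivariant deformation argument on $\mathbb{R}\times\mathcal{S}_r(a)$ then gives a PS sequence at level $c_{\mu,j}$ with $P_\mu(u_n)\to0$.

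\textbf{Compactness for fixed $\mu$ and the limit $\mu\to0$.} For fixed $\mu$, boundedness follows from $I_\mu-\frac{3}{16}\cdot$(Pohozaev-type combination) together with $(h_5)$. This is the coercivity step that is delicate at exponent $\frac{16}{3}$; there we will use $(h_5')$ and the scaling exponents of $\int|\nabla u|^2$, $\int u^2|\nabla u|^2$ and $\int|\nabla u|^\theta$. The Lagrange multiplier is controlled by $(h_6)$, which gives $\lambda_{\mu}>0$ via the Pohozaev identity. Radial compactness (Strauss) then yields a critical point $u_{\mu,j}$ with $I_\mu(u_{\mu,j})=c_{\mu,j}$.

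Next we let $\mu\to0$. The bounds are uniform, since $c_{\mu,j}\in[b_j,B_j]$ and the Pohozaev identity holds exactly. Moser iteration as in \cite{JLW2015,LZ2023} gives $\|u_{\mu,j}\|_\infty\le C$ uniformly, and $\mu\int|\nabla u_{\mu,j}|^\theta\to0$. Passing to the limit in the weak formulation, with test functions $\varphi$ and $u\varphi$ handled through the $L^\infty$ bound, yields a solution $u^j\in H^1\cap L^\infty$ with $\|u^j\|_2=a$. Its energy satisfies $I(u^j)=\lim c_{\mu,j}\ge b_j\to\infty$; passing to a subsequence gives increasing energies.

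\textbf{Main obstacle.} We expect the hardest part to be the uniform-in-$\mu$ boundedness, together with ruling out loss of mass in the limit $\mu\to0$. Mass loss is excluded by showing $\lambda=\lim\lambda_{\mu}>0$: if $\lambda=0$, $(h_6)$ and the Pohozaev identity force $u^j=0$, which contradicts $I(u^j)\ge b_j>0$. Then strong $L^2$ convergence follows from the radial compact embedding, combined with testing the equation with $u$.
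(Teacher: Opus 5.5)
Your choice $\theta>\frac{16}{3}$ destroys the geometry the whole argument rests on. Under the mass-preserving dilation, $\int_{\mathbb{R}^3}|\nabla(s*u)|^\theta=e^{(\frac{5\theta}{2}-3)s}\int_{\mathbb{R}^3}|\nabla u|^\theta$ with $\frac{5\theta}{2}-3>\frac{31}{3}$. By contrast, $(h_2)$--$(h_3)$ give $e^{-3s}\int_{\mathbb{R}^3}|H(e^{\frac32 s}u)|=O(e^{6s})$.

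Hence for every fixed $\mu>0$ you get $I_\mu(s*u)\to+\infty$ as $s\to+\infty$. Three things follow:
\begin{itemize}
\item Your assertion that dilation drives $I_\mu$ to $-\infty$ is false.
\item $s\mapsto I_\mu(s*u)$ has no global maximum, so there is no $s(u)$ as in Lemma~\ref{lem2.4}.
\item By Gagliardo--Nirenberg in $W^{1,\theta}$, $I_\mu$ is even bounded below on $\mathcal{S}(a)$, by a constant that degenerates as $\mu\to0^+$.
\end{itemize}
Dilating a fixed symmetric genus-$j$ set until $I<-M$ and then shrinking $\mu$ shows that your levels satisfy $c_{\mu,j}\to-\infty$ as $\mu\to0^+$, not $c_{\mu,j}\ge b_j$. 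Coercivity on $\mathcal{P}_\mu(a)$ also needs $5-\theta(1+\gamma_\theta)>0$, i.e.\ $\theta<\frac{16}{5}$ (proof of Lemma~\ref{lem2.5}(3)). This is why the paper takes $\theta\in(\frac{12}{5},3)$, where $I_\mu\in\mathcal{C}^1(\Upsilon)$ without any $L^\infty$ embedding.

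Your boundedness step fails as written too. The combination $I_\mu-\frac{3}{16}P_\mu$ has nonlinear part $\int_{\mathbb{R}^3}\bigl(\frac{9}{32}h(u)u-\frac{25}{16}H(u)\bigr)$. This has a sign only if $h(t)t\ge\frac{50}{9}H(t)$, whereas Lemma~\ref{lem2.1} yields only the constant $\frac{16}{3}$. The paper instead uses $I_\mu-\frac15P_\mu$, which controls $\mu\int|\nabla u|^\theta$ and $\int|\nabla u|^2$ but not $\int|u|^2|\nabla u|^2$. It bounds the latter by a separate vanishing/non-vanishing argument for $v_n=(-s_n)*u_n$.

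Second, even with an admissible $\theta$, the minimax class you describe gives no useful levels. The map $u\mapsto s*u$ is odd, continuous and maps $\mathcal{S}_r(a)$ into itself. So every compact symmetric set of genus $\ge j$ (or odd image of $\mathbb{S}^{j-1}$) can be dilated until $I_\mu$ is uniformly as negative as you like on it, and $\inf\max I_\mu=-\infty$. Nothing forces such sets to meet $\{\|\nabla u\|_2=\rho_j\}\cap V_{j-1}^\perp$ unless a linking boundary condition is built into the class, as in the fountain-type classes of Bartsch--de Valeriola on $\mathbb{R}\times\mathcal{S}_r(a)$. You do not set one up.

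The missing idea is the paper's: apply the genus minimax to the even functional $\Psi_\mu(u)=I_\mu(s(u)*u)=\max_{s\in\mathbb{R}}I_\mu(s*u)$ (Lemmas~\ref{lem2.4} and~\ref{lem5.2}), over compact symmetric subsets of $\mathcal{S}_r(a)$ with genus $\ge j$. Then:
\begin{itemize}
\item $c^j_\mu(a)\ge\frac{\zeta}{8}$ by Lemma~\ref{lem2.5}(1);
\item $c^j_\mu(a)$ is nondecreasing in $\mu$;
\item $\inf_{\mu}c^j_\mu(a)\to+\infty$ (Lemma~\ref{lem5.5}).
\end{itemize}
Your intersection idea can be run on the Pohozaev manifold instead: $u\mapsto s(u)*u$ is odd and continuous, so $\{s(u)*u:u\in A\}$ again has genus $\ge j$ and meets $V_{j-1}^\perp$. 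Moreover, Palais--Smale sequences can then be chosen exactly on $\mathcal{P}_\mu(a)$ (Lemma~\ref{lem5.3}), which is what makes Lemma~\ref{lem2.5}(3) applicable.

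Your final step matches Lemmas~\ref{lem3.7} and~\ref{lem4.1} once these two points are repaired: $\mu\to0^+$ with a uniform Moser bound, $\lambda>0$ from $(h_6)$, hence no mass loss.
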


\begin{remark}
    The general form of $h$ causes greater difficulties than the special form $h(t)=|t|^{p-2}t$ in \cite{LZ2023}. We need to explore more technical results to discuss the geometry and coerciveness of energy functional. And the process of obtaining properties about the Pohozaev manifold is also more complicated.
\end{remark}

It is easy to see that if $u$ is the critical point of $I |_{\mathcal{S}(a)}$, then $u$ satisfies the following Pohozaev identity
$$P (u):=\int_{\mathbb{R}^3} |\nabla u|^2 +
5\int_{\mathbb{R}^3} |u|^2 |\nabla u|^2 -\frac{3}{2} \int_{\mathbb{R}^3} \tilde{H}(u).$$
We define
 $$m(a):=\inf_{u \in \mathcal{P}'(a)} I(u),$$
 $$\sigma(a):=\inf_{u \in \mathcal{S}'(a), \Theta(u,\lambda)=0} I(u),$$
  where $\mathcal{P}'(a):=\{u\in \mathcal{S}'(a)\ |\ P(u)=0 \}$, $\Theta(u,\lambda):=  -\Delta u-u\Delta u^2 -h(u) +\lambda u$.
 And we have the following results:
\begin{theorem}\label{Thm1.5}
    Let $(h_1)-(h_7)$ hold, then for the function $a \mapsto m(a)$ we have  \\
    (1) $m(a)$ is positive and lower semicontinuous;  \\
    (2) $m(a) \rightarrow 0^+$ as $a \rightarrow +\infty$;  \\
    (3) $m(a) \rightarrow +\infty$ as $a \rightarrow 0^+$; moreover, $\sigma(a) \rightarrow +\infty$ as $a \rightarrow 0^+$.
\end{theorem}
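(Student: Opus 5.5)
The plan rests on the fiber map adapted to the quasilinear scaling. For $u\in\mathcal{S}'(a)$ and $t>0$ set $(t\star u)(x):=t^{3/2}u(tx)$. This preserves the $L^2$ norm and gives
\[
I(t\star u)=\frac{t^2}{2}A(u)+t^5B(u)-t^{-3}\int_{\mathbb{R}^3}H(t^{3/2}u),
\]
with $A(u)=\int|\nabla u|^2$ and $B(u)=\int|u|^2|\nabla u|^2$. One checks that $\frac{d}{dt}I(t\star u)=\frac1t P(t\star u)$.

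\textbf{Fiber structure.} Using $(h_2)$, $(h_4)$ and $(h_5')$, I will show that $t\mapsto I(t\star u)$ has a unique critical point $t_u$, which is a strict maximum. Hence
\[
m(a)=\inf_{u\in\mathcal{S}'(a)}\max_{t>0}I(t\star u),
\]
and $P(u)=0$ forces $I(u)=\max_t I(t\star u)$. I take this as established in the earlier sections on the Pohozaev manifold.

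\textbf{Part (1).} On $\mathcal{P}'(a)$, $(h_5)$ gives $\frac{16}{3}\cdot\frac32 H\le \ldots$; more precisely $(h_5)$ yields $\tilde H(t)\ge \frac{10}{3}H(t)$-type bounds, so $I(u)=I(u)-\frac15P(u)\ge c\int|\nabla u|^2$. This only gives $m(a)\ge0$, so positivity needs a lower bound on $\|\nabla u\|_2$ on $\mathcal{P}'(a)$. By $(h_2)$–$(h_3)$, $|\tilde H(t)|\le \varepsilon|t|^{16/3}+C_\varepsilon|t|^{12}$ (using $|u|^2$ for the quasilinear part). Write $v=u^2$, so $\|\nabla v\|_2^2=4B(u)$. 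Gagliardo–Nirenberg then bounds $\int|u|^{16/3}$ by $C a^{4/3}B(u)$, the mass-critical quantity for $v$. Likewise $\int|u|^{12}=\int v^6\le CB(u)^3$. Then $P(u)=0$ gives
\[
A+5B\le \tfrac32\bigl(\varepsilon Ca^{4/3}B+C_\varepsilon B^3\bigr),
\]
hence $B\ge\delta(a)>0$. Using $I-\frac15P\ge \frac3{10}A+\ldots$, with the $\frac{16}{3}$ case requiring extra care, $m(a)\ge c\delta(a)>0$.

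Lower semicontinuity: take $a_n\to a$ and $u_n\in\mathcal{P}'(a_n)$ almost minimizing. Rescale $w_n=\frac{a}{a_n}u_n\in\mathcal{S}'(a)$ and project it, $t_n\star w_n\in\mathcal{P}'(a)$. Boundedness of $t_n$ follows from coercivity plus the lower bound on $B$. This gives $m(a)\le I(t_n\star w_n)=I(t_n\star u_n)+o(1)\le I(u_n)+o(1)$, so $m(a)\le\liminf m(a_n)$.

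\textbf{Part (2).} Fix a radial $\varphi$ with $\|\varphi\|_2=1$ and use the amplitude scaling $u=a\,s^{3/2}\varphi(sx)$, choosing $s$ as a function of $a$. Then $A\sim a^2s^2$ and $B\sim a^4s^5$. By $(h_7)$, $H(t)\ge M|t|^6$ near $0$. Choose $s=a^{-\gamma}$ with $\gamma$ large enough that $as^{3/2}\to0$, so only small values of $u$ are seen. The term $\int H(u)\ge Ma^6s^6$ then dominates $B\sim a^4s^5$ in the relevant range, and $\max_t I(t\star u)$ can be estimated explicitly. Maximizing $\frac{t^2}{2}A+t^5B-Mt^6\cdot a^6s^6$ over $t$ gives a value that tends to $0$ as $a\to\infty$. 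Truncation of $(h_7)$ to small amplitudes is handled by ensuring $t_u\, a s^{3/2}\to0$.

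\textbf{Part (3).} This is the main obstacle. On $\mathcal{P}'(a)$, the bound above reads
\[
B(1-C\varepsilon a^{4/3})\le C B^3,
\]
so $B\ge c$ uniformly for small $a$. The required improvement is $B\to\infty$: since $a\to0$, the $\varepsilon$ can be replaced by a fixed constant times $a^{4/3}$. Then $B^2\ge c'$, and the small-mass Gagliardo–Nirenberg factor $a^{\theta}$ in the $|t|^{12}$ term (interpolating $\int v^6$ with $\|v\|_1=a^2$) gives $B\gtrsim a^{-\kappa}$. Combined with $I=I-\frac15P\gtrsim B$-type coercivity, this yields $m(a)\to\infty$. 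For $\sigma(a)$: any solution of $\Theta(u,\lambda)=0$ in $\mathcal{S}'(a)$ satisfies $P(u)=0$, so it lies in $\mathcal{P}'(a)$. Hence $\sigma(a)\ge m(a)\to+\infty$.
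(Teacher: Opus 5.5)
Your proposal has genuine gaps in all three parts. The common one, in (1) and (3), is that you turn lower bounds on $B(u)=\int_{\mathbb{R}^3}|u|^2|\nabla u|^2$ over $\mathcal{P}'(a)$ into lower bounds on $I$ via ``$I=I-\frac15P\gtrsim B$''. On $\mathcal{P}'(a)$ one only has
\[
I(u)=\tfrac{3}{10}\int_{\mathbb{R}^3}|\nabla u|^2+\tfrac{3}{10}\int_{\mathbb{R}^3}\Big(h(u)u-\tfrac{16}{3}H(u)\Big).
\]
Under the borderline hypothesis $(h_5)$ the last integrand can be $o(\tilde H)$ at infinity. For example, if $H(t)$ behaves like $|t|^{16/3}\log|t|$ there, the integrand is $O(|t|^{16/3})$ while $\tilde H(t)\sim\frac{10}{3}|t|^{16/3}\log|t|$. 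One checks that Pohozaev points of large amplitude $M$ then have $I/B\lesssim M^{-2}+a^{4/3}/B\to0$. Such points exist for every $a$ (fibers of rapidly oscillating profiles), and every point of $\mathcal{P}'(a)$ has them as $a\to0^+$. So neither $m(a)\ge c\,\delta(a)$ nor the last line of (3) follows; the ``extra care'' you defer is the whole difficulty.

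The missing idea is the fiber maximality you stated but never used: for $u\in\mathcal{P}'(a)$, $I(u)\ge I(t\star u)$ for every $t>0$.
\begin{itemize}
\item Choose $t$ with $A(t\star u)+B(t\star u)$ equal to a small constant and apply Lemma~\ref{lem2.2}. This gives positivity, as in Lemma~\ref{lem2.5}(1).
\item Choose $t$ with this quantity equal to a fixed $K\gg1$. Then $\int|t\star u|^{16/3}\le Ca^{4/3}B(t\star u)$ and $\int|t\star u|^6\le CA(t\star u)^3$ give $m(a)\ge K/4$ for small $a$. This is Lemma~\ref{lem6.2}, and it needs no lower bound on $B$ over $\mathcal{P}'(a)$.
\end{itemize}
Alternatively, convert your $B$-bounds into $A$-bounds via $5B\le\frac32\int\tilde H(u)\le C(a^{1/3}A^{5/2}+A^3)$ and use $I\ge\frac{3}{10}A$.

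Your derivation of $B\to\infty$ is also wrong as written. For $v=u^2$, $\int v^6\le CB^3$ is the Sobolev-critical bound, and the Gagliardo--Nirenberg exponent on $\|v\|_1$ is zero there, so no small-mass factor appears. You must use the $|t|^6$ growth from $(h_3)$, namely $\int|u|^6\le Ca^{6/5}B^{6/5}$, which gives $B\gtrsim a^{-6}$.

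In (2), the parameter $s$ does nothing: $t\star\big(as^{3/2}\varphi(s\,\cdot)\big)=(ts)\star(a\varphi)$, so the fiber and its maximum do not depend on $s$. ``Ensuring $t_u a s^{3/2}\to0$'' is therefore exactly the unproved step. You must show that the Pohozaev point $\tau_a\star(a\varphi)$ has vanishing amplitude $a\tau_a^{3/2}$. The paper gets this by writing $P=0$ as
\[
a^{-2}\tau_a^{-3}\int_{\mathbb{R}^3}|\nabla\varphi|^2+5\int_{\mathbb{R}^3}|\varphi|^2|\nabla\varphi|^2=\tfrac32a^{4/3}\int_{\mathbb{R}^3}\frac{\tilde H(a\tau_a^{3/2}\varphi)}{|a\tau_a^{3/2}\varphi|^{16/3}}|\varphi|^{16/3}.
\]
By $(h_5')$ and $\varphi\in L^\infty$, the factor $a^{4/3}\to\infty$ is incompatible with $a\tau_a^{3/2}$ staying away from $0$.

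Moreover, with $M$ fixed in $H(t)\ge M|t|^6$ (which uses $(h_6)$ as well as $(h_7)$), your explicit maximum tends to a constant of order $M^{-1/2}$, not to $0$. You must also let $M\to\infty$, as the paper does when proving $ae^{s_a}\to0$.

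Your lower-semicontinuity argument is essentially the paper's. You rescale by amplitude instead of dilation, and passing to a subsequence with $m(a_n)$ bounded is a fine replacement for Step 1. However, the upper bound on $t_n$ does not come from coercivity and $B\ge\delta$ alone. You must first exclude vanishing, using $B\ge\delta$, $P(u_n)=0$ and Lions' lemma. Then translate and apply Fatou with $(h_4)$, as in Claims 1--2 of the proof of Lemma~\ref{lem6.3}.
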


\begin{remark}
    As far as we know, no one has studied asymptotic properties of $m(a)$ and $\sigma(a)$ in $L^2$-supercritical case. At the same time, since the quasilinear term is non-differential in $H^1(\mathbb{R}^3)$,  we can only obtain that $m(a)$ is less than $\sigma(a)$.
\end{remark}

    We complete the introduction by sketching the structure of the paper. In Section 2, we define the perturbed functional, discuss the geometry of $I_\mu$ and give the properties of the Pohozaev manifold. In Section 3, we define the closed sublevel, minimax
level and obtain the Palais-Smale sequence. In Section 4, we study the convergence of the critical points of the perturbed functional as $\mu$ tends 0, moreover, we finish the proof of Theorem \ref{Thm1.1}. In Section 5,  we give the proof of Theorem \ref{Thm1.2}. In Section 6, we study the lower semi-continuity of  $m(a)$ and the asymptotic properties of  $m(a)$ and $\sigma(a)$, furthermore, we complete the proof of Theorem \ref{Thm1.5}.
 \bigskip

\noindent \textbf{Notation.}
Throughout this paper, we make use of the following notations.  \\
$\bullet$ The standard norm in $L^s(\mathbb{R}^3) (1\leq s\leq \infty)$ is denoted by $\|\cdot\|_s$.  \\
$\bullet$ $o_n(1)$ means a quantity which tends to 0 as $n$ tends to $\infty$.   \\
$\bullet$ The symbols $\rightarrow$ and $\rightharpoonup$ denote the strong and the weak convergence, respectively.  \\
$\bullet$ C, $C_\varepsilon, C_q$ stand for various constants whose exact values are irrelevant.

\section{Preliminaries}\label{preliminary}

In this section, we establish some useful preliminary results.
First of all, we take the perturbation method to deal with nondifferentiability, which has been applied to constrained situations in \cite{JLW2015}.

For $\mu \in (0,1]$ and $\theta \in \left(\frac{12}{5},3\right)$, we consider the functional $I_\mu: \Upsilon \mapsto \mathbb{R} $
\begin{equation} \label{eq2-1}
I_{\mu}(u):=\frac{\mu}{\theta} \int_{\mathbb{R}^3} |\nabla u|^{\theta}+I(u),
\end{equation}
where $\Upsilon =W^{1,\theta}(\mathbb{R}^3)\cap H^1(\mathbb{R}^3)$. It is clear that $I_\mu \in \mathcal{C}^1(\Upsilon)$. At the same time, we define
\begin{equation} \label{eq2-2}
\mathcal{S}(a):=\left\{u \in \Upsilon \;\bigg|\;\int_{\mathbb{R}^3}|u|^2dx=a^2\right\}.
\end{equation}

It is easy to see that if $u$ is the critical point of $I_\mu |_{\mathcal{S}(a)}$, then $u$ satisfies the following Pohozaev identity
$$P_\mu (u):=\mu(1+\gamma_\theta) \int_{\mathbb{R}^3} |\nabla u|^{\theta} +\int_{\mathbb{R}^3} |\nabla u|^2 +
5\int_{\mathbb{R}^3} |u|^2 |\nabla u|^2 -\frac{3}{2} \int_{\mathbb{R}^3} \tilde{H}(u) ,$$
where $\gamma_\theta=\frac{3(\theta-2)}{2\theta}$.

By $(h_1)-(h_3)$, for any $\varepsilon >0$, there exists a constant $C_\varepsilon >0$ such that

\begin{equation} \label{eq2-3}
|h(t)|<\varepsilon |t|^{\frac{13}{3}} +C_\varepsilon |t|^{5}
\end{equation}
and

\begin{equation} \label{eq2-4}
|h(t)|< \varepsilon|t|^{5} + C_\varepsilon |t|^{\frac{13}{3}}.
\end{equation}
By the Gagliardo-Nirenberg inequality \cite{A2008},
for any $u \in \{u \in L^2(\mathbb{R}^3)\;| \;\nabla (u^2) \in L^2(\mathbb{R}^3) \}$, there exists a constant $C_q >0$ such that
 \begin{equation}\label{GNI}
 \int_{\mathbb{R}^3} |u|^q \leq C_q\left (\int_{\mathbb{R}^3} |u|^2\right)^{\frac{12-q}{10}}\left(4\int_{\mathbb{R}^3} |u|^2 |\nabla u|^2\right)^{\frac{3(q-2)}{10}},\;\; \mbox{for}\;\; \mbox{any}\;\; q \in (2,12).
 \end{equation}
Inspired by \cite{J1997}, we consider the $L^2$-norm preserved transform
\begin{equation} \label{eq2-5}
s*u(x)=e^{\frac{3}{2}s} u(e^sx),
\end{equation}
it is easy to see that
$$I_\mu(s*u)=\frac{\mu}{\theta} e^{\theta(1+\gamma_\theta)s}  \int_{\mathbb{R}^3} |\nabla u|^{\theta} + \frac{1}{2}e^{2s} \int_{\mathbb{R}^3} |\nabla u|^2 + e^{5s}\int_{\mathbb{R}^3} |u|^2 |\nabla u|^2 -e^{-3s}\int_{\mathbb{R}^3}  H\left(e^{\frac{3}{2}s}u\right) ,$$
\begin{align*}
\frac{d}{ds}I_\mu(s*u)=&\;\mu(1+\gamma_\theta) e^{\theta(1+\gamma_\theta)s}  \int_{\mathbb{R}^3} |\nabla u|^{\theta} + e^{2s} \int_{\mathbb{R}^3} |\nabla u|^2 + 5e^{5s}\int_{\mathbb{R}^3} |u|^2 |\nabla u|^2 \\
&+3e^{-3s}\int_{\mathbb{R}^3}  H\left(e^{\frac{3}{2}s}u\right)-\frac{3}{2} e^{-\frac{3}{2}s}\int_{\mathbb{R}^3}  h\left(e^{\frac{3}{2}s}u\right)u.
\end{align*}
As a result,
$$P_\mu(u)=\frac{d}{ds}\bigg|_{s=0}I_\mu(s*u).$$

Next, we need to establish the following important lemmas.
\begin{lemma}\label{lem2.1}
 If h satisfies $(h_1)$, $(h_2)$, $(h_4)$ and $(h_5')$, then
 $$h(t)t>\frac{16}{3}H(t)>0, \;  for\; any \; t \neq 0.$$
\end{lemma}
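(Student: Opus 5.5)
The plan is to work with the auxiliary quantity $\tilde H(t)=h(t)t-2H(t)$ and to recover $H$ from it by integration, so that the monotonicity in $(h_5')$ transfers into the desired inequality. By $(h_1)$, $h$ is odd, so $H$ is even and $t\mapsto h(t)t$ is even; hence $\tilde H$ is even as well. It therefore suffices to treat $t>0$.

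\emph{Step 1: behaviour at the origin.} By $(h_2)$, $h(t)t=o(|t|^{16/3})$ as $t\to0$. Integrating $h$ gives $H(t)=o(|t|^{16/3})$ too. Hence
\[
\frac{\tilde H(t)}{|t|^{16/3}}\to 0 \quad\text{and}\quad \frac{H(t)}{t^2}\to0 \qquad (t\to0^+).
\]
Combined with $(h_5')$, which says $\tilde H(t)/t^{16/3}$ is strictly increasing on $(0,\infty)$, this yields $\tilde H(t)>0$ for $t>0$.

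\emph{Step 2: an integral representation of $H$.} For $t>0$ we have
\[
\Big(\frac{H(t)}{t^2}\Big)'=\frac{h(t)t-2H(t)}{t^3}=\frac{\tilde H(t)}{t^3}.
\]
Since $H(t)/t^2\to0$ as $t\to0^+$, it follows that
\[
H(t)=t^2\int_0^t\frac{\tilde H(s)}{s^3}\,ds=t^2\int_0^t\frac{\tilde H(s)}{s^{16/3}}\,s^{7/3}\,ds .
\]
The integrand is positive by Step 1, so immediately $H(t)>0$ for $t>0$. This is the second inequality in the statement.

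\emph{Step 3: the key estimate.} By the strict monotonicity in $(h_5')$, $\tilde H(s)/s^{16/3}<\tilde H(t)/t^{16/3}$ for $0<s<t$. Inserting this into the integral gives
\[
H(t)<t^2\,\frac{\tilde H(t)}{t^{16/3}}\cdot\frac{3}{10}t^{10/3}=\frac{3}{10}\tilde H(t).
\]
The inequality is strict because the two continuous functions differ on a set of positive measure. Rearranging, $h(t)t-2H(t)>\frac{10}{3}H(t)$, that is, $h(t)t>\frac{16}{3}H(t)$. Evenness then extends both inequalities to $t<0$.

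The only delicate point is the justification of the integral representation in Step 2: integrability of $\tilde H(s)/s^3$ near $0$ and the vanishing of the boundary term. Both follow from the $o(|t|^{16/3})$ bounds supplied by $(h_2)$, which is exactly where that hypothesis is needed. Hypothesis $(h_4)$ appears not to be required for this argument.
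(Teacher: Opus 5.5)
Your proof is correct, and it takes a genuinely different and considerably shorter route than the paper. The paper follows the Jeanjean--Lu scheme in four claims:
\begin{itemize}
\item first it proves $H>0$ by contradiction, taking a global minimizer $\tau$ of $H(t)/|t|^{16/3}$ (its existence needs $(h_4)$) and combining $h(\tau)\tau=\frac{16}{3}H(\tau)$ with $\tilde H>0$;
\item next it finds points near $0$ and near $\infty$ (the latter again via $(h_4)$) where $h(t)t>\frac{16}{3}H(t)$;
\item then it rules out any interval $(\xi_1,\xi_2)$ on which this inequality fails, by comparing $H/|t|^{16/3}=\frac{3}{10}\tilde H/|t|^{16/3}$ at the endpoints;
\item only after that does it deduce that $h(t)/|t|^{13/3}$ is strictly increasing and integrate to get the strict inequality.
\end{itemize}
You integrate the identity $(H(t)/t^2)'=\tilde H(t)/t^3$ from $0$, using only the decay from $(h_2)$. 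This gives the exact representation $H(t)=t^2\int_0^t \frac{\tilde H(s)}{s^{16/3}}\,s^{7/3}\,ds$, which turns the monotonicity in $(h_5')$ directly into $0<H(t)<\frac{3}{10}\tilde H(t)$ in one step, with no contradiction arguments or case analysis. As you note, this shows $(h_4)$ is superfluous for the lemma. The by-products of the paper's route, namely the monotonicity of $H(t)/|t|^{16/3}$ and of $h(t)/|t|^{13/3}$ on $(0,\infty)$, follow at once from your conclusion, so nothing is lost.
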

\begin{proof}
The proof is inspired by \cite[Lemma 2.3]{JL2020}. We split the proof into several claims.

\textbf{Claim 1:}
\;$H(t)>0$, for any $t \neq 0.$

Indeed, if $H(t_0)\leq 0$ for some $t_0 \neq 0$, by $(h_1)$, $(h_2)$ and $(h_4)$, there exists $\tau \neq 0$ such that
$$H(\tau)\leq 0,\;\,\frac{H(\tau)}{|\tau|^{\frac{16}{3}}} =\min_{t \in \mathbb{R}} \frac{H(t)}{|t|^{\frac{16}{3}}}$$
and
$$\left[\frac{H(t)}{|t|^{\frac{16}{3}}}\right]_{t=\tau}' =\frac{h(\tau)\tau-\frac{16}{3}H(\tau)} {|\tau|^{\frac{19}{3}}} \mbox{sgn}(\tau) =0.$$
Set
\begin{align*}
\begin{split}
g(t):=\left \{
\begin{array}{ll}
\frac{h(t)t-2H(t)}{|t|^{\frac{16}{3}}},  &t \neq 0,\\
0,     &t=0.
\end{array}
\right.
\end{split}
\end{align*}

By $(h_2)$ and $(h_5')$, we obtain $g$ is strictly decreasing on $(-\infty,0]$ and strictly increasing on $[0,+\infty)$. Then $h(t)t>2H(t)$ for any $t \neq 0$, we derive a contradiction:
$$0<{h(\tau)\tau-2H(\tau)}=\frac{10}{3}H(\tau) \leq 0.$$
The proof of Claim 1 is complete.

\textbf{Claim 2:}
 There exist a positive sequence $\{\tau^+_n\}$  and a negative sequence $\{\tau^-_n\}$  such that $|\tau^\pm_n| \rightarrow 0$ and $h(\tau^\pm_n) \tau^\pm_n >  \frac{16}{3}H(\tau^\pm_n )$  for each $n \geq 1$.

We first consider the positive case. By contradiction, we assume that there exists a constant $T_s>0$ small enough such that $h(t)t \leq \frac{16}{3}H(t)$, for any $t \in (0,T_s]$. Then
$$\left [\frac{H(t)}{|t|^{\frac{16}{3}}}\right]'\leq 0.$$
Using Claim 1, we have
$$ \frac{H(t)}{|t|^{\frac{16}{3}}}\geq  \frac{H(T_s)}{|T_s|^{\frac{16}{3}}}>0.$$
By $(h_2)$, we obtain a contradiction. The negative case is similar and so we obtain Claim 2.

\textbf{Claim 3:}
 There exist a positive sequence $\{\sigma^+_n\}$  and a negative sequence $\{\sigma^-_n\}$  such that $|\sigma^\pm_n| \rightarrow +\infty$ and $h(\sigma^\pm_n) \sigma^\pm_n >  \frac{16}{3}H(\sigma^\pm_n )$  for each $n \geq 1$.

Claim 3 is similar to Claim 2.

\textbf{Claim 4:}
$h(t)t \geq \frac{16}{3}H(t)$, for any $t \neq 0.$

Let us assume that $h(t_0)t_0 < \frac{16}{3}H(t_0)$ for some $t_0\neq 0$. Without loss of generality, we can take $t_0 > 0$. Claim 2 and Claim 3 imply that there exist $\xi_1 \in (0,t_0)$ and
 $\xi_2 \in (t_0, +\infty)$ such that
\begin{equation} \label{eq2-6}
h(t)t < \frac{16}{3}H(t), \;\mbox{for any }t \in (\xi_1,\xi_2),
\end{equation}
\begin{equation} \label{eq2-7}
h(t)t = \frac{16}{3}H(t),\;\mbox{when} \ t=\xi_1,\xi_2.
\end{equation}
In view of \eqref{eq2-6}, we have
\begin{equation} \label{eq2-8}
\frac{H(\xi_1)}{|\xi_1|^{\frac{16}{3}}}>  \frac{H(\xi_2)}{|\xi_2|^{\frac{16}{3}}}.
\end{equation}
On the other hand, by \eqref{eq2-7} and $(h_5')$, it is clear that
\begin{equation} \label{eq2-9}
\frac{H(\xi_1)}{|\xi_1|^{\frac{16}{3}}} =\frac{3}{10}\frac{\tilde{H}(\xi_1)}{|\xi_1|^{\frac{16}{3}}} <                         \frac{3}{10}\frac{\tilde{H}(\xi_2)}{|\xi_2|^{\frac{16}{3}}}  = \frac{H(\xi_2)}{|\xi_2|^{\frac{16}{3}}}.
\end{equation}
So we obtain Claim 4.

It follows from Claim 4 that the function $\frac{H(t)}{|t|^{\frac{16}{3}}}$ is nonincreasing on $(-\infty,0)$ and nondecreasing on $(0,+\infty)$. For any  $t>0$, we get
$\frac{h(t)}{|t|^{\frac{13}{3}}}=\frac{\tilde{H}(t)}{|t|^{\frac{16}{3}}}+2\frac{H(t)}{|t|^{\frac{16}{3}}}$. By $(h_5')$, we conclude that the function $\frac{h(t)}{|t|^{\frac{13}{3}}}$ is strictly increasing on $(0,+\infty)$. For any $t\neq0$,
it is easy to see that
\begin{align*}
\frac{16}{3}H(t)&=\frac{16}{3}\int^t_0 h(s)ds \\
&<\frac{16}{3} \int^t_0 \frac{h(t)}{|t|^{\frac{13}{3}}} |s|^{\frac{13}{3}} ds \\
&=h(t)t,
\end{align*}
which completes the proof of Lemma \ref{lem2.1}.

\end{proof}

\begin{remark}
In fact, $h(t)=|t|^{p-2}t$ does not need this technical lemma, where $p \in (\frac{16}{3},6)$. While, for the general $h$, a lot of difficulties arise. The above argument is crucially used to establish  the coerciveness of $I_\mu$, which plays a significant role in the boundedness of the Palais-Smale sequence.
\end{remark}

We recall here a radial lemma in \cite{Willem}.
\begin{lemma}\label{lem02-1}
      If $N \geq 2$, then there exists $c(N)>0$ such that, for every $u \in H^1_r(\mathbb{R}^N)$,
      $$|u(x)| \leq c(N)\|u\|^\frac{1}{2}_2\|\nabla u\|^\frac{1}{2}_2|x|^\frac{1-N}{2} \;\,a.e. \;\,in \;\, \mathbb{R}^N.$$
\end{lemma}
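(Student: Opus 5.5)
The claim is the Strauss-type radial decay estimate: for $u\in H^1_r(\mathbb{R}^N)$, $N\ge2$, $|u(x)|\le c(N)\|u\|_2^{1/2}\|\nabla u\|_2^{1/2}|x|^{(1-N)/2}$ almost everywhere. I would prove it for smooth compactly supported radial functions and then pass to general $u$ by density.

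\emph{The smooth case.} Let $u\in C_c^\infty(\mathbb{R}^N)$ be radial and write $u(x)=v(r)$ with $r=|x|$. Since $v$ vanishes for large $r$, for every $r>0$
\[
r^{N-1}v(r)^2=-\int_r^\infty \frac{d}{ds}\bigl(s^{N-1}v(s)^2\bigr)\,ds
=-\int_r^\infty\bigl((N-1)s^{N-2}v^2+2s^{N-1}vv'\bigr)\,ds .
\]
The first term in the integrand is nonnegative, so dropping it gives
\[
r^{N-1}v(r)^2\le 2\int_r^\infty s^{N-1}|v||v'|\,ds .
\]
Cauchy--Schwarz in the measure $s^{N-1}ds$, together with $\int_0^\infty s^{N-1}|v|^2ds=\omega_{N-1}^{-1}\|u\|_2^2$ (where $\omega_{N-1}$ is the area of the unit sphere) and $|v'(|x|)|=|\nabla u(x)|$, then yields
\[
r^{N-1}|u(x)|^2\le \frac{2}{\omega_{N-1}}\|u\|_2\|\nabla u\|_2 .
\]
This is the claim with $c(N)=(2/\omega_{N-1})^{1/2}$.

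\emph{The general case.} For $u\in H^1_r(\mathbb{R}^N)$, I would take radial $u_n\in C_c^\infty$ with $u_n\to u$ in $H^1$. These can be obtained by mollifying with a radial mollifier and cutting off with a radial cutoff, both of which preserve radial symmetry. Passing to a subsequence, $u_n\to u$ a.e. The right-hand sides converge, so the pointwise inequality passes to the limit a.e.

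\emph{Main obstacle.} The only delicate point is the density of radial test functions in $H^1_r$. I would settle it by averaging the approximants over $O(N)$, which keeps them radial and does not increase the distance to $u$, or simply by invoking the standard construction described above. Since the lemma is quoted from \cite{Willem}, it suffices to cite it, with this sketch as a check.
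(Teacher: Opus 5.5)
Your argument is correct. It is the standard Strauss proof: integrate $\frac{d}{ds}\bigl(s^{N-1}v^2\bigr)$ over $[r,\infty)$, drop the nonnegative term $(N-1)s^{N-2}v^2$, apply Cauchy--Schwarz in the measure $s^{N-1}ds$, and extend to $H^1_r(\mathbb{R}^N)$ via density of radial test functions and a.e.\ convergence along a subsequence. The paper gives no proof of its own and simply quotes the lemma from \cite{Willem}, which is the standard source for this argument.
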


\begin{lemma}\label{lem02-2}
    Assume that $h$ is a continuous function satisfying $(h_2)$ and $(h_3)$. If $u_n \rightharpoonup u$ in $H^1_r(\mathbb{R}^3)$ and $u_n \rightarrow u$ a.e. in $\mathbb{R}^3$  as $n \rightarrow \infty$, then we have  \\
(1) $\int_{\mathbb{R}^3} h(u_n)u_n \rightarrow \int_{\mathbb{R}^3} h(u)u$;  \\
(2) $\int_{\mathbb{R}^3} H(u_n) \rightarrow \int_{\mathbb{R}^3} H(u)$;  \\
(3) $\int_{\mathbb{R}^3} \tilde{H}(u_n) \rightarrow \int_{\mathbb{R}^3} \tilde{H}(u)$.
\end{lemma}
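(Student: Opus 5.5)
The plan is the classical Strauss compactness argument. Lemma \ref{lem02-1} supplies uniform decay away from the origin and Rellich gives compactness on bounded sets; the growth bounds \eqref{eq2-3}--\eqref{eq2-4} then let us control the small-$|t|$ and large-$|t|$ regimes separately. Since $u_n\rightharpoonup u$ in $H^1_r(\mathbb{R}^3)$, there is $M>0$ with $\|u_n\|_{H^1}\le M$ and $\|u\|_{H^1}\le M$. By Lemma \ref{lem02-1},
$$|u_n(x)|\le C M|x|^{-1},\qquad |u(x)|\le CM|x|^{-1}\quad\text{a.e.}$$

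It suffices to prove (1) and (2), because (3) follows from $\tilde H(t)=h(t)t-2H(t)$. Both integrands have the form $\Phi(u_n)$ with $\Phi(t)=h(t)t$ or $\Phi(t)=H(t)$. From $(h_2)$, $(h_3)$ and continuity we get, for every $\varepsilon>0$, a constant $C_\varepsilon$ with
$$|\Phi(t)|\le \varepsilon\left(|t|^{\frac{16}{3}}+|t|^{6}\right)+C_\varepsilon |t|^{q}$$
for some fixed $q\in(\frac{16}{3},6)$. This comes from \eqref{eq2-3}--\eqref{eq2-4}: $\Phi$ is $o(|t|^{16/3})$ at $0$ and $o(|t|^6)$ at infinity, and on the middle range it is bounded by $C|t|^q$. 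By Sobolev embedding $H^1\hookrightarrow L^p$ for $p\in[2,6]$, the norms $\|u_n\|_{16/3}$ and $\|u_n\|_6$ are bounded uniformly in $n$. Hence the $\varepsilon$-terms contribute at most $C\varepsilon$, uniformly in $n$.

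The key step is to show $u_n\to u$ in $L^q(\mathbb{R}^3)$ for $q\in(2,6)$. On the ball $B_R$, Rellich compactness together with a.e. convergence gives $u_n\to u$ in $L^q(B_R)$; here a.e. convergence identifies the limit and avoids passing to subsequences. Outside $B_R$ the radial decay gives
$$\int_{|x|>R}|u_n|^q\le \|u_n\|_{L^\infty(|x|>R)}^{q-2}\|u_n\|_2^2\le (CM/R)^{q-2}M^2,$$
which is small uniformly in $n$ once $R$ is large, and the same bound holds for $u$.

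To conclude, split
$$\Phi(u_n)-\Phi(u)=\bigl[\Phi(u_n)-\Phi(u)\bigr]\chi_{\{|u_n|\le \delta\}\cup\{|u_n|\ge K\}}+\bigl[\Phi(u_n)-\Phi(u)\bigr]\chi_{\{\delta<|u_n|<K\}}.$$
The first part is bounded by $C\varepsilon$ using the growth estimate above, applied to both $u_n$ and $u$. On the middle region $|\Phi(u_n)|\le C_{\delta,K}|u_n|^q$, and $|u_n|^q$ converges in $L^1$ by the key step. The generalized dominated convergence theorem, or Vitali's theorem using uniform integrability inherited from the $L^q$ convergence, then gives convergence of the middle term.

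The main obstacle is handling $u$ on the set where $u_n$ is in the middle range. A simpler route avoids this: the family $\{\Phi(u_n)\}$ is uniformly integrable, being dominated by $C\varepsilon(|u_n|^{16/3}+|u_n|^6)+C_\varepsilon|u_n|^q$ with $|u_n|^q$ convergent in $L^1$, and it is tight by the tail estimate. Since $\Phi(u_n)\to\Phi(u)$ a.e. by continuity, Vitali's theorem gives $\int\Phi(u_n)\to\int\Phi(u)$, with $\varepsilon$ sent to $0$ at the end.
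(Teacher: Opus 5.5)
Your final argument, the Vitali route in the last paragraph, is correct. It differs from the paper's proof mainly in how it is packaged, and it uses one ingredient the paper avoids.

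\textbf{The paper's route.} The paper never uses the compact embedding.
\begin{itemize}
\item Outside a large ball it combines the radial lemma with $(h_2)$. There $|u_n|$ is uniformly small, so $|h(u_n)u_n|\le\varepsilon|u_n|^{16/3}$.
\item Inside $B_R(0)$ it runs Egorov's theorem by hand, with the cruder bound $|h(t)t|\le C_\varepsilon+\varepsilon|t|^6$. A constant term is harmless there because the exceptional set has measure $<\varepsilon/C_\varepsilon$.
\end{itemize}
In effect the paper proves Vitali's theorem by hand in this setting.

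\textbf{Your route.} You apply Vitali on all of $\mathbb{R}^3$, where a constant term is not integrable. That is why you need the three-term bound $\varepsilon(|t|^{16/3}+|t|^6)+C_\varepsilon|t|^q$, together with uniform integrability and tightness. Your reduction of (3) to (1) and (2) via $\tilde H(t)=h(t)t-2H(t)$ is also neater than the paper's ``similar''.

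\textbf{Your ``key step'' is unnecessary.} You do not need $u_n\to u$ in $L^q$ (Strauss compactness). Uniform integrability of $\{|u_n|^q\}$ already follows from
\[
\int_E|u_n|^q\le |E|^{1-q/6}\|u_n\|_6^q .
\]
So your proof, like the paper's, really uses only $H^1$-boundedness, radial decay and a.e. convergence.

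\textbf{Two clarifications.}
\begin{itemize}
\item $\varepsilon$ enters only when verifying uniform integrability and tightness. Given $\eta$, choose $\varepsilon$ first, then the measure threshold or $R$. Vitali's conclusion is exact, so nothing needs to be ``sent to $0$ at the end''.
\item Discard the split-based argument you sketch first, as written. On $\{|u_n|\le\delta\}\cup\{|u_n|\ge K\}$ the limit $u$ may take middle-range values, so $|\Phi(u)|$ is not controlled by the $\varepsilon$-terms there. This is exactly the obstacle you flagged.
\end{itemize}
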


\begin{proof}
(1) By means of $(h_2)$ and Lemma \ref{lem02-1}, for any $\varepsilon >0$ there exists $R>0$ such that
$$\int_{\mathbb{R}^3 \backslash B_R(0)} (|h(u_n)u_n| + |h(u)u|)dx \leq \varepsilon \int_{\mathbb{R}^3 \backslash B_R(0)} \left(|u_n|^{\frac{16}{3}}+ |u|^{\frac{16}{3}}\right)dx \leq C\varepsilon.$$
Therefore, to prove the result, it suffices to show that
$$\int_{B_R(0)} h(u_n)u_n dx \rightarrow \int_{B_R(0)} h(u)u dx.$$
Taking into account $(h_2)$ and $(h_3)$, for $\varepsilon >0$ there exists $C_\varepsilon >0$ such that
\begin{equation}\label{eq02-9}
    |h(t)t| \leq C_\varepsilon +\varepsilon |t|^{6} \;\, \mbox{for} \;\, t \in \mathbb{R}.
\end{equation}
Denote $\delta:=\frac{\varepsilon}{C_\varepsilon}$. In view of the Egorov theorem, there exists $\Omega \subset B_R(0)$ with meas$(\Omega) <\delta$ such that $h(u_n)u_n \rightarrow h(u)u$ uniformly for $x \in B_R(0) \backslash \Omega$. Then
\begin{equation*}
   \int_{B_R(0) \backslash \Omega} |h(u_n)u_n - h(u)u| dx \rightarrow 0,
\end{equation*}
as $n \rightarrow \infty$. We see from \eqref{eq02-9} and the Sobolev inequality that
\begin{equation*}
   \int_{\Omega} |h(u_n)u_n - h(u)u| dx \leq \int_{\Omega} \left(2C_\varepsilon +\varepsilon |u_n|^{6} +\varepsilon |u|^{6}\right)dx \leq C\varepsilon.
\end{equation*}
Taking upper limit as $n \rightarrow \infty$ in
\begin{equation*}
   \int_{B_R(0)} |h(u_n)u_n - h(u)u| dx= \int_{B_R(0) \backslash \Omega} |h(u_n)u_n - h(u)u| dx+\int_{\Omega} |h(u_n)u_n - h(u)u| dx,
\end{equation*}
we have
\begin{equation*}
  \limsup_{n \rightarrow \infty} \int_{B_R(0)} |h(u_n)u_n - h(u)u| dx \leq C\varepsilon,
\end{equation*}
which indicates
$$\int_{B_R(0)} h(u_n)u_n dx \rightarrow \int_{B_R(0)} h(u)u dx.$$
The proof is finished.

(2) and (3) are similar to (1), we omit their proofs here.
\end{proof}

\begin{lemma} \label{lem2.2}
Let $(h_1)-(h_3)$ hold. Then there exists a constant $\zeta >0$ such that when $u \in \mathcal{S}(a)$ with $\int_{\mathbb{R}^3}|\nabla u|^2 + \int_{\mathbb{R}^3} |u|^2 |\nabla u|^2 <\zeta $, we have
\begin{equation}\label{eq2-10}
\frac{1}{4}\left(\int_{\mathbb{R}^3} |\nabla u|^2 +\int_{\mathbb{R}^3} |u|^2 |\nabla u|^2\right)\leq I_\mu(u) \leq   \frac{\mu}{\theta} \int_{\mathbb{R}^3} |\nabla u|^{\theta} +\int_{\mathbb{R}^3} |\nabla u|^2 +\int_{\mathbb{R}^3} |u|^2 |\nabla u|^2,
\end{equation}
\begin{equation}\label{eq2-10-1}
P_\mu (u)\geq \frac{1}{2} \left(\int_{\mathbb{R}^3} |\nabla u|^2 +\int_{\mathbb{R}^3} |u|^2 |\nabla u|^2\right).
\end{equation}
\end{lemma}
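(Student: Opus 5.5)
The idea is to bound the nonlinear terms $\int H(u)$ and $\int \tilde H(u)$ by superlinear powers of $A(u):=\int_{\mathbb{R}^3}|\nabla u|^2+\int_{\mathbb{R}^3}|u|^2|\nabla u|^2$. Once that is done, both inequalities follow by taking $A(u)$ small.

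\textbf{Step 1: pointwise bounds.} Integrating \eqref{eq2-3} with a fixed $\varepsilon=1$ gives
\[
|H(t)|\le C\bigl(|t|^{\frac{16}{3}}+|t|^6\bigr).
\]
Similarly, $\tilde H(t)=h(t)t-2H(t)$ satisfies
\[
|\tilde H(t)|\le C\bigl(|t|^{\frac{16}{3}}+|t|^6\bigr).
\]

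\textbf{Step 2: Gagliardo--Nirenberg.} Apply \eqref{GNI} with $\|u\|_2=a$ fixed:
\begin{align*}
\int_{\mathbb{R}^3}|u|^{\frac{16}{3}} &\le C_a\Bigl(\int_{\mathbb{R}^3}|u|^2|\nabla u|^2\Bigr)^{\frac{3(16/3-2)}{10}}=C_a\Bigl(\int_{\mathbb{R}^3}|u|^2|\nabla u|^2\Bigr)^{1},\\
\int_{\mathbb{R}^3}|u|^6 &\le C_a\Bigl(\int_{\mathbb{R}^3}|u|^2|\nabla u|^2\Bigr)^{\frac{6}{5}}.
\end{align*}
The first exponent is exactly $1$, which is precisely the $L^2$-critical exponent $\frac{16}{3}$. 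It is therefore not superlinear, and this is the main obstacle.

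\textbf{Step 3: handling the critical exponent.} I would not fix $\varepsilon$. Instead I would use \eqref{eq2-3} with $\varepsilon$ small, relying on $(h_2)$, which is a little-$o$ of $|t|^{13/3}$. This gives
\[
|H(t)|+|\tilde H(t)|\le \varepsilon|t|^{\frac{16}{3}}+C_\varepsilon|t|^6 ,
\]
and hence
\[
\int_{\mathbb{R}^3}|H(u)|+\int_{\mathbb{R}^3}|\tilde H(u)|\le \varepsilon C_a A(u)+C_\varepsilon C_a A(u)^{\frac65}.
\]
Choose $\varepsilon$ so that $\varepsilon C_a\le \frac1{16}$. Then choose $\zeta$ so that $C_\varepsilon C_a\zeta^{1/5}\le\frac1{16}$. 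For $A(u)<\zeta$ this gives
\[
\int_{\mathbb{R}^3}|H(u)|,\ \int_{\mathbb{R}^3}|\tilde H(u)|\le \tfrac18 A(u).
\]

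\textbf{Step 4: conclusion.} For the upper bound in \eqref{eq2-10}, drop the factor $\frac12$ on the gradient term and bound $-\int H(u)\le\frac18A(u)$. Actually it is simpler to use $H>0$ from Lemma \ref{lem2.1}, which requires $(h_4)$ and $(h_5)$; to stay within $(h_1)$–$(h_3)$ I use the estimate instead. The result is
\[
I_\mu(u)\le \frac{\mu}{\theta}\int_{\mathbb{R}^3}|\nabla u|^\theta+\frac12\int_{\mathbb{R}^3}|\nabla u|^2+\int_{\mathbb{R}^3}|u|^2|\nabla u|^2+\frac18A(u),
\]
and the right-hand side is at most the stated upper bound.

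For the lower bound, drop the nonnegative $\mu$-term:
\[
I_\mu(u)\ge \frac12 A(u)-\frac18A(u)\ge\frac14A(u).
\]

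For \eqref{eq2-10-1}, note that $\mu(1+\gamma_\theta)\int_{\mathbb{R}^3}|\nabla u|^\theta\ge0$ and $5\int_{\mathbb{R}^3}|u|^2|\nabla u|^2\ge\int_{\mathbb{R}^3}|u|^2|\nabla u|^2$. Therefore
\[
P_\mu(u)\ge A(u)-\frac32\cdot\frac18A(u)\ge\frac12A(u).
\]
The constant $\zeta$ depends on $a$, which is allowed since $a$ is fixed.
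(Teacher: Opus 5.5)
Your lower bound for $I_\mu$ and your bound on $P_\mu$ are correct, and they follow the paper's argument. The only difference is that the paper controls the $|u|^6$ part by Sobolev, $\int|u|^6\le C(\int|\nabla u|^2)^3$, where you use \eqref{GNI} with exponent $\frac65$; both are superlinear, so this is immaterial.

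\textbf{The gap: the upper bound in \eqref{eq2-10}.} It does not follow from your estimate. What you actually obtain is
$I_\mu(u)\le\frac{\mu}{\theta}\int|\nabla u|^\theta+\frac58\int|\nabla u|^2+\frac98\int|u|^2|\nabla u|^2$.
This lies below the stated bound only when $\int|u|^2|\nabla u|^2\le 3\int|\nabla u|^2$, and nothing forces that on $\mathcal{S}(a)\cap\{A(u)<\zeta\}$. For example, place a thin bump $M\psi_0(M^5x)$ far from a widely spread profile that carries the mass. This gives functions with $A(u)\to0$ whose quasilinear energy (of order $M^{-1}$) exceeds their Dirichlet energy (of order $M^{-3}$) by a factor of order $M^2$.

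The root cause is that your bound $\int|H(u)|\le\frac18A(u)$ is paid partly in quasilinear energy. The only slack in the stated upper bound is the missing $\frac12\int|\nabla u|^2$, so quasilinear energy cannot be absorbed there. The paper's one-line ``Therefore'' has exactly the same defect: its estimate $|\int H(u)|\le\frac14A(u)$ only yields $\frac34\int|\nabla u|^2+\frac54\int|u|^2|\nabla u|^2$ on the right. So the paper's proof does not supply the missing ingredient.

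\textbf{How to close it within $(h_1)$--$(h_3)$.} Measure the nonlinearity against $\int|\nabla u|^2$ alone, using the ordinary $H^1$ Gagliardo--Nirenberg and Sobolev inequalities:
$\int|u|^{16/3}\le Ca^{1/3}(\int|\nabla u|^2)^{5/2}$ and $\int|u|^6\le S(\int|\nabla u|^2)^3$.
Combine these with $|H(t)|+|\tilde H(t)|\le C(|t|^{16/3}+|t|^6)$; no small $\varepsilon$ is needed. This gives
$\int|H(u)|+\int|\tilde H(u)|\le C_a(\zeta^{3/2}+\zeta^2)\int|\nabla u|^2\le\frac14\int|\nabla u|^2$ once $\zeta$ is small.
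All three inequalities then follow immediately:
\begin{itemize}
\item $I_\mu(u)\ge\frac14\int|\nabla u|^2+\int|u|^2|\nabla u|^2$;
\item $I_\mu(u)\le\frac{\mu}{\theta}\int|\nabla u|^\theta+\frac34\int|\nabla u|^2+\int|u|^2|\nabla u|^2$;
\item $P_\mu(u)\ge\frac58\int|\nabla u|^2+5\int|u|^2|\nabla u|^2$.
\end{itemize}
In particular, the ``critical exponent'' obstacle in your Step 2 is an artifact of estimating $\int|u|^{16/3}$ through \eqref{GNI}. Relative to $\|\nabla u\|_2$, both exponents $\frac{16}{3}$ and $6$ are superlinear.

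Alternatively, under the paper's standing hypotheses you may simply use $H>0$ from Lemma \ref{lem2.1}. Note also that the later applications (Lemma \ref{lem2.3}(1) and Lemma \ref{lem2.5}(2)) only need some bound of the form $C(\mu\int|\nabla u|^\theta+A(u))$. So the weaker inequality that you and the paper derive is harmless downstream, but it is not the inequality stated.
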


\begin{proof}
For any $u\in \mathcal{S}(a)$, taking into account \eqref{eq2-3}, \eqref{GNI} and the Sobolev embedding theorem we can deduce that
\begin{align*}
\left|\int_{\mathbb{R}^3} H(u)\right| & \leq \varepsilon \int_{\mathbb{R}^3} |u|^{\frac{16}{3}} +C_\varepsilon \int_{\mathbb{R}^3} |u|^{6}  \\
&\leq C\left[\varepsilon \int_{\mathbb{R}^3} |u|^2 |\nabla u|^2 +C_\varepsilon \left(\int_{\mathbb{R}^3}|\nabla u|^2\right)^3\right]\\
&\leq C\left[\varepsilon  +C_\varepsilon \left(\int_{\mathbb{R}^3}|\nabla u|^2\right)^{2}\right]
\left(\int_{\mathbb{R}^3} |u|^2 |\nabla u|^2 + \int_{\mathbb{R}^3}|\nabla u|^2 \right).
\end{align*}
Let $\varepsilon =\frac{1}{8C} $, there exists a constant $\zeta' >\int_{\mathbb{R}^3}|\nabla u|^2 + \int_{\mathbb{R}^3} |u|^2 |\nabla u|^2$ such that
\begin{align*}
\left|\int_{\mathbb{R}^3} H(u)\right| \leq \frac{1}{4} \left(\int_{\mathbb{R}^3}|\nabla u|^2 + \int_{\mathbb{R}^3} |u|^2 |\nabla u|^2\right).
\end{align*}
Therefore,
\begin{align*}
\frac{1}{4}\left(\int_{\mathbb{R}^3} |\nabla u|^2 +\int_{\mathbb{R}^3} |u|^2 |\nabla u|^2\right)\leq I_\mu(u) \leq   \frac{\mu}{\theta} \int_{\mathbb{R}^3} |\nabla u|^{\theta} +\int_{\mathbb{R}^3} |\nabla u|^2 +\int_{\mathbb{R}^3} |u|^2 |\nabla u|^2.
\end{align*}

Similarly, there exists a constant $\zeta'' >\int_{\mathbb{R}^3}|\nabla u|^2 + \int_{\mathbb{R}^3} |u|^2 |\nabla u|^2$ such that
\begin{align*}
\left|\frac{3}{2}\int_{\mathbb{R}^3} \tilde{H}(u)\right| \leq \frac{1}{4} \left(\int_{\mathbb{R}^3}|\nabla u|^2 + \int_{\mathbb{R}^3} |u|^2 |\nabla u|^2\right).
\end{align*}
Moreover,
\begin{align*}
P_\mu (u)&=\mu(1+\gamma_\theta) \int_{\mathbb{R}^3} |\nabla u|^{\theta} +\int_{\mathbb{R}^3} |\nabla u|^2 +5\int_{\mathbb{R}^3} |u|^2 |\nabla u|^2 -\frac{3}{2} \int_{\mathbb{R}^3} \tilde{H}(u) \\
&\geq \frac{1}{2} \left(\int_{\mathbb{R}^3} |\nabla u|^2 +\int_{\mathbb{R}^3} |u|^2 |\nabla u|^2\right).
\end{align*}

In the end, we may set $\zeta=\min\{\zeta',\zeta''\}$. Therefore, the conclusion is valid.

\end{proof}

\begin{remark}
Lemma \ref{lem2.2} gives us the estimation of upper and lower bounds of $I_\mu$, which has a crucial impact on the discussion of the geometry of $I_\mu$. Moreover, it is also used to prove the positivity of critical value by which we can obtain the existence of ground state normalized
solutions and infinitely many normalized solutions.

\end{remark}

Now, we introduce some properties about $I_\mu(s*u)$.
\begin{lemma} \label{lem2.3}
Suppose that $(h_1)-(h_4)$ and $(h_5')$ hold. Then for any $u \in \Upsilon \backslash \{0\}$, we have\\
(1) $I_\mu(s*u) \rightarrow 0^+$ as $s\rightarrow -\infty;$\\
(2) $I_\mu(s*u) \rightarrow -\infty $ as $s\rightarrow +\infty.$
\end{lemma}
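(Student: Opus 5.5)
For (1), I would rely on Lemma \ref{lem2.2} together with the scaling formulas for $s*u$. Write
$$A(s):=\int_{\mathbb{R}^3}|\nabla (s*u)|^2+\int_{\mathbb{R}^3}|s*u|^2|\nabla (s*u)|^2=e^{2s}\int_{\mathbb{R}^3}|\nabla u|^2+e^{5s}\int_{\mathbb{R}^3}|u|^2|\nabla u|^2 .$$
Since $s*u\in\mathcal{S}(a)$ with $a=\|u\|_2$, and $A(s)\to0$ as $s\to-\infty$, we have $A(s)<\zeta$ for $s$ very negative. Lemma \ref{lem2.2} then gives
$$\tfrac14 A(s)\le I_\mu(s*u)\le \tfrac{\mu}{\theta}e^{\theta(1+\gamma_\theta)s}\int_{\mathbb{R}^3}|\nabla u|^\theta + A(s).$$
Because $\theta(1+\gamma_\theta)=\frac{5\theta-6}{2}>0$, the right-hand side tends to $0$. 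The left-hand side is strictly positive since $u\neq0$ forces $\nabla u\not\equiv0$. Hence $I_\mu(s*u)\to0^+$.

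One point needs care. If $\int|u|^2|\nabla u|^2=+\infty$, then $I_\mu(u)$ is not finite. In that case $I_\mu(s*u)=+\infty$ for all $s$, so I would treat such $u$ as excluded, or note that the statement is meant for $u$ in the domain where $I$ is finite. I will work under that implicit assumption, as the paper does.

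For (2), I would use $(h_4)$ to get superquadratic growth of $H$ against the factor $e^{-3s}$. By Lemma \ref{lem2.1} (which uses $(h_1),(h_2),(h_4),(h_5')$), $H\ge0$. Fix $M>0$. By $(h_4)$ there is $T>0$ with $H(t)\ge M|t|^{16/3}$ for $|t|\ge T$. Choose a set $\Omega$ of positive measure and $\delta>0$ with $|u|\ge\delta$ on $\Omega$. For $s$ large, $e^{3s/2}\delta\ge T$, and then
$$e^{-3s}\int_{\mathbb{R}^3}H(e^{\frac32 s}u)\ \ge\ e^{-3s}\int_\Omega M e^{8s}|u|^{16/3}=M e^{5s}\int_\Omega|u|^{16/3}.$$
This yields
$$I_\mu(s*u)\le \tfrac{\mu}{\theta}e^{\theta(1+\gamma_\theta)s}\int|\nabla u|^\theta+\tfrac12e^{2s}\int|\nabla u|^2+e^{5s}\Big(\int|u|^2|\nabla u|^2-M\int_\Omega|u|^{16/3}\Big).$$
For $\theta<3$ we have $\theta(1+\gamma_\theta)=\frac{5\theta-6}{2}<\frac92<5$, so $e^{5s}$ is the dominant exponential. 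Taking $M$ so large that the bracket is negative gives $I_\mu(s*u)\to-\infty$.

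The main obstacle is exactly this exponent comparison. The growth condition $(h_4)$ at exponent $\frac{16}{3}$ produces the rate $e^{5s}$, which matches the quasilinear term's rate exactly. That is why I let $M$ be arbitrarily large rather than trying to compare exponents, and why the restriction $\theta<3$ is needed to keep the perturbation term of lower order.
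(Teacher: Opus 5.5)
Your proposal is correct and follows the paper's proof. Part (1) is the same application of Lemma \ref{lem2.2} to $s*u$ for $s$ very negative. In part (2), your explicit set $\Omega$ on which $|u|\ge\delta$, together with $H\ge0$, is just a hands-on replacement for the paper's Fatou argument giving $e^{-8s}\int_{\mathbb{R}^3}H(e^{\frac32 s}u)\to+\infty$, after which both arguments factor out $e^{5s}$ using $\theta(1+\gamma_\theta)<5$. Your caveat about $\int_{\mathbb{R}^3}|u|^2|\nabla u|^2=+\infty$ is unnecessary: since $\theta>\frac{12}{5}$, one has $\Upsilon\hookrightarrow L^{2\theta/(\theta-2)}(\mathbb{R}^3)$, and H\"older gives $\int_{\mathbb{R}^3}|u|^2|\nabla u|^2\le\|u\|_{2\theta/(\theta-2)}^2\|\nabla u\|_\theta^2<\infty$ for every $u\in\Upsilon$, which is exactly why $I_\mu$ is finite (indeed $\mathcal{C}^1$) there.
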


\begin{proof}
(1) There holds $\int_{\mathbb{R}^3} |\nabla (s*u)|^2 + \int_{\mathbb{R}^3} |s*u|^2 |\nabla (s*u)|^2 <\zeta $ for any $s$ satisfying $s<0$ and $|s|$ large enough. It follows from Lemma \ref{lem2.2} that
\begin{align*}
0\leq I_\mu(s*u) \leq \frac{\mu}{\theta} e^{\theta(1+\gamma_\theta)s}  \int_{\mathbb{R}^3} |\nabla u|^{\theta} + e^{2s} \int_{\mathbb{R}^3} |\nabla u|^2 + e^{5s}\int_{\mathbb{R}^3} |u|^2 |\nabla u|^2 \rightarrow 0^+,
\end{align*}
 as $s\rightarrow -\infty$.

(2) From $(h_4)$, Lemma \ref{lem2.1} and the Fatou's lemma, we conclude that
\begin{align*}
\liminf_{s \rightarrow +\infty} e^{-8s}\int_{\mathbb{R}^3}H\left(e^{\frac{3}{2}s}u\right)&= \liminf_{s \rightarrow +\infty} \int_{\mathbb{R}^3}\frac{H\left(e^{\frac{3}{2}s}u\right)}{|e^{\frac{3}{2}s}u|^{\frac{16}{3}}}|u|^{\frac{16}{3}}    \\
&\geq  \int_{\mathbb{R}^3} \liminf_{s \rightarrow +\infty} \frac{H\left(e^{\frac{3}{2}s}u\right)}{|e^{\frac{3}{2}s}u|^{\frac{16}{3}}}|u|^{\frac{16}{3}}= +\infty.
\end{align*}
Then
\begin{align*}
&\ I_\mu(s*u)   \\
=&\ \frac{\mu}{\theta} e^{\theta(1+\gamma_\theta)s}  \int_{\mathbb{R}^3} |\nabla u|^{\theta} + \frac{1}{2}e^{2s} \int_{\mathbb{R}^3} |\nabla u|^2 + e^{5s}\int_{\mathbb{R}^3} |u|^2 |\nabla u|^2 -e^{-3s}\int_{\mathbb{R}^3}H\left(e^{\frac{3}{2}s}u\right) \\
 \leq&\ e^{5s}\bigg(\frac{\mu}{\theta} e^{-[5-\theta(1+\gamma_\theta)]s}  \int_{\mathbb{R}^3} |\nabla u|^{\theta} + \frac{1}{2}e^{-3s} \int_{\mathbb{R}^3} |\nabla u|^2 + \int_{\mathbb{R}^3} |u|^2 |\nabla u|^2   \\
 & -e^{-8s}\int_{\mathbb{R}^3}H\left(e^{\frac{3}{2}s}u\right) \bigg) \\
 \rightarrow& -\infty,
\end{align*}
as $s\rightarrow +\infty$.
\end{proof}

We define the Pohozaev manifold $\mathcal{P}_\mu(a):=\{u\in \mathcal{S}(a)\ |\ P_\mu(u)=0 \}$ and obtain the following properties about the Pohozaev manifold.

\begin{lemma}\label{lem2.4}
If h satisfies $(h_1)-(h_4)$ and $(h_5')$, then for any $u \in \Upsilon \backslash \{0\}$, we have  \\
(1) there exists a unique number $s(u) \in \mathbb{R}$ such that $P_\mu(s(u)*u)=0$, $I_\mu(s(u)*u)=\max_{s\in \mathbb{R}} I_\mu(s*u) >0$;  \\
(2) the mapping $u\mapsto s(u)$ is well defined and continuous on $ \Upsilon \backslash \{0\}$;  \\
(3) $s(u(\cdot+y)) = s(u)$ for any $y \in \mathbb{R}^3$, $s(-u) = s(u)$.
\end{lemma}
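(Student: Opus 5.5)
The plan is to study the fiber map $\Phi_u(s):=I_\mu(s*u)$ for fixed $u\in\Upsilon\setminus\{0\}$, whose derivative satisfies $\Phi_u'(s)=P_\mu(s*u)$, as already computed before Lemma \ref{lem2.3}. Existence of a maximum point follows from Lemma \ref{lem2.3}. Indeed, $\Phi_u(s)\to 0^+$ as $s\to-\infty$, and by Lemma \ref{lem2.2} we have $\Phi_u(s)>0$ for $s$ very negative. Since also $\Phi_u(s)\to-\infty$ as $s\to+\infty$, the continuous function $\Phi_u$ attains a positive global maximum at some $s_u$, and there $P_\mu(s_u*u)=\Phi_u'(s_u)=0$. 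The real content of (1) is uniqueness: I will show that every critical point of $\Phi_u$ is a strict local maximum, which forces at most one critical point.

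For uniqueness, set $A=\int|\nabla u|^\theta$, $B=\int|\nabla u|^2$, $C=\int|u|^2|\nabla u|^2$ and $\kappa=\theta(1+\gamma_\theta)$. Note that $\kappa=\frac{5\theta}{2}-3\in(3,\frac92)$ for $\theta\in(\frac{12}{5},3)$, so $\kappa<5$. Writing $t=e^{3s/2}$, we have
$$\Phi_u'(s)=\mu\kappa e^{\kappa s}A+e^{2s}B+5e^{5s}C-\tfrac32e^{-3s}\int\tilde H(e^{3s/2}u).$$
Dividing by $e^{5s}$ gives
$$e^{-5s}\Phi_u'(s)=\mu\kappa e^{-(5-\kappa)s}A+e^{-3s}B+5C-\tfrac32\int\frac{\tilde H(e^{3s/2}u)}{|e^{3s/2}u|^{16/3}}|u|^{16/3}.$$
The first two terms are strictly decreasing in $s$ (provided $u\neq0$, since then $B>0$), and $C$ is constant. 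By $(h_5')$ the integral term is nondecreasing in $s$, so it enters with a minus sign as a nonincreasing quantity. Hence $s\mapsto e^{-5s}\Phi_u'(s)$ is strictly decreasing, and $\Phi_u'$ has exactly one zero $s(u)$, which is the global maximum point. This gives (1), with $I_\mu(s(u)*u)>0$. This monotonicity argument is the cleanest route and avoids computing second derivatives. The point to check carefully is that the weight $e^{-5s}$ makes every competing term monotone simultaneously, which is exactly why the hypotheses $\theta<3$ and the exponent $\frac{16}{3}=\frac{10}{3}+2$ appear.

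For (2), well-definedness follows from (1). For continuity, take $u_n\to u$ in $\Upsilon\setminus\{0\}$. First I show $\{s(u_n)\}$ is bounded. If $s(u_n)\to+\infty$ along a subsequence, then after passing to a further subsequence with $u_n\to u$ a.e., Fatou's lemma together with $(h_4)$ (as in Lemma \ref{lem2.3}(2)) gives
$$0<I_\mu(s(u_n)*u_n)\le e^{5s_n}\Big(\frac{\mu}{\theta}e^{-(5-\kappa)s_n}A_n+\tfrac12 e^{-3s_n}B_n+C_n-e^{-8s_n}\int H(e^{3s_n/2}u_n)\Big)\to-\infty,$$
which is a contradiction. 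If $s(u_n)\to-\infty$, then $P_\mu(s_n*u_n)=0$ contradicts Lemma \ref{lem2.2}: $s_n*u_n$ has small energy $\int|\nabla\cdot|^2+\int|\cdot|^2|\nabla\cdot|^2$ (the bounds being uniform since $u_n\to u$), so $P_\mu(s_n*u_n)>0$. Next, any limit $s_0$ of a subsequence satisfies $P_\mu(s_0*u)=0$, by continuity of $P_\mu$ on $\Upsilon$ and of $(s,v)\mapsto s*v$. Uniqueness from (1) then forces $s_0=s(u)$, so the whole sequence converges to $s(u)$. I expect this boundedness step, especially the Fatou argument with a moving $u_n$, to be the most delicate part. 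Part (3) follows because $I_\mu$ and $P_\mu$ are invariant under translations and, by $(h_1)$, under $u\mapsto-u$; combined with $s*(u(\cdot+y))=(s*u)(\cdot+e^{-s}y)$, uniqueness then gives $s(u(\cdot+y))=s(u)$ and $s(-u)=s(u)$.
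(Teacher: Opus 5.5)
Your proof is correct and is essentially the paper's argument: existence from Lemma \ref{lem2.3}, uniqueness by dividing $P_\mu(s*u)$ by $e^{5s}$ and using $(h_5')$, boundedness of $s(u_n)$ via the Fatou/$(h_4)$ contradiction, identification of the limit by uniqueness, and (3) by translation and oddness invariance. There are two minor differences. For $s(u_n)\to-\infty$ you use the positivity of $P_\mu$ near the origin from Lemma \ref{lem2.2}; this is valid because its constant $\zeta$ is uniform for masses in a bounded set. The paper instead argues $0<I_\mu(s(u)*u)=\lim_n I_\mu(s(u)*u_n)\le\liminf_n I_\mu(s(u_n)*u_n)=0$. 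Also, the first term of $\Phi_u'$ should carry the coefficient $\mu(1+\gamma_\theta)$ rather than $\mu\kappa$, which does not affect the monotonicity.
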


\begin{proof}
(1) It is easy to see that
$$ \frac{d}{ds}I_\mu(s*u)=P_\mu(s*u).$$
By using of Lemma \ref{lem2.3}, there exists some $s(u) \in \mathbb{R}$ such that $I_\mu(s*u)$ achieves the global maximum at $s(u)$ and $I_\mu(s(u)*u)>0$. Then, $ P_\mu(s(u)*u)=0.$

In what follows, we prove the uniqueness of $s(u)$. Suppose that $s_1(u)<s_2(u)$ such that
$$P_\mu(s_1(u)*u)=0=P_\mu(s_2(u)*u).$$
In view of the fact that
\begin{align*}
&\ P_\mu(s*u) \\
 =&\;\mu(1+\gamma_\theta) e^{\theta(1+\gamma_\theta)s}  \int_{\mathbb{R}^3} |\nabla u|^{\theta} + e^{2s} \int_{\mathbb{R}^3} |\nabla u|^2 + 5e^{5s}\int_{\mathbb{R}^3} |u|^2 |\nabla u|^2    \\
 &-\frac{3}{2}e^{5s}\int_{\mathbb{R}^3} \frac{\tilde{H}(e^{\frac{3}{2}s}u)}{|e^{\frac{3}{2}s}u|^{\frac{16}{3}}}|u|^{\frac{16}{3}} \\
=&\;e^{5s} \bigg[\mu(1+\gamma_\theta) e^{-[5-\theta(1+\gamma_\theta)]s}  \int_{\mathbb{R}^3} |\nabla u|^{\theta} + e^{-3s} \int_{\mathbb{R}^3} |\nabla u|^2 + 5\int_{\mathbb{R}^3} |u|^2 |\nabla u|^2    \\
&-\frac{3}{2}\int_{\mathbb{R}^3} \frac{\tilde{H}(e^{\frac{3}{2}s}u)}{|e^{\frac{3}{2}s}u|^{\frac{16}{3}}}|u|^{\frac{16}{3}} \bigg]
\end{align*}
and the mapping $s \mapsto \frac{\tilde{H}(e^{\frac{3}{2}s}u)}{|e^{\frac{3}{2}s}u|^{\frac{16}{3}}}$ is strictly increasing for $u \in \Upsilon\backslash \{0\}$ by $(h_5')$, we get $s_1(u)=s_2(u)$.

(2) By (1), it is easy to see that the mapping $u\mapsto s(u)$ is well defined on $ \Upsilon \backslash \{0\}.$
Let $\{u_n\} \subset \Upsilon \backslash \{0\}$ be any sequence satisfying $ u_n \rightarrow u \in \Upsilon \backslash \{0\}$ in $\Upsilon $.

\textbf{Claim:}
 $\{s(u_n)\}$ is bounded in $\mathbb{R}.$

Up to a subsequence, if $s(u_n) \rightarrow +\infty$ as $n \rightarrow \infty$, then
\begin{align*}
0\leq&\ e^{-5s(u_n)} I_\mu(s(u_n)*u_n) \\
\leq&\ \frac{\mu}{\theta}e^{-[5-\theta(1+\gamma_\theta)]s(u_n)}  \int_{\mathbb{R}^3} |\nabla u_n|^{\theta} + \frac{1}{2}e^{-3s(u_n)} \int_{\mathbb{R}^3} |\nabla u_n|^2 + \int_{\mathbb{R}^3} |u_n|^2 |\nabla u_n|^2     \\
&-e^{-8s(u_n)}\int_{\mathbb{R}^3}H\left(e^{\frac{3}{2}s(u_n)}u_n\right)  \\
\rightarrow& -\infty,
\end{align*}
as $n \rightarrow \infty$, a contradiction, which implies that $\{s(u_n)\}$ is bounded from above.

Up to a subsequence, if $s(u_n) \rightarrow -\infty$ as $n \rightarrow \infty$, then it follows from $s(u)*u_n \rightarrow s(u)*u$ in $\Upsilon$ that
\begin{align*}
0&<I_\mu(s(u)*u) \\
&=\lim_{n\rightarrow \infty}I_\mu(s(u)*u_n) \\
&\leq \liminf_{n\rightarrow \infty}I_\mu(s(u_n)*u_n) \\
&\leq \liminf_{n\rightarrow \infty}\left( \frac{1}{\theta} \int_{\mathbb{R}^3} |\nabla (s(u_n)*u_n)|^{\theta} + \frac{1}{2}\int_{\mathbb{R}^3} |\nabla (s(u_n)*u_n)|^2 + \int_{\mathbb{R}^3} |s(u_n)*u_n|^2 |\nabla (s(u_n)*u_n)|^2 \right) \\
&\leq \liminf_{n\rightarrow \infty} \left(\frac{1}{\theta}e^{\theta(1+\gamma_\theta)s(u_n)}  \int_{\mathbb{R}^3} |\nabla u_n|^{\theta} + \frac{1}{2}e^{2s(u_n)} \int_{\mathbb{R}^3} |\nabla u_n|^2 + e^{5s(u_n)}\int_{\mathbb{R}^3} |u_n|^2 |\nabla u_n|^2 \right) \\
&=0,
\end{align*}
a contradiction. This shows $\{s(u_n)\}$ is bounded in $\mathbb{R}$.

Without loss of generality, we may assume that $s(u_n) \rightarrow s$ as $n \rightarrow \infty$, which together with the fact $u_n \rightarrow u$ in $\Upsilon$ yields that $s(u_n)*u_n \rightarrow s*u$ in $\Upsilon$. Hence, we get that
\begin{align*}
&\ P_\mu(s*u) \\
=&\ \mu(1+\gamma_\theta) \int_{\mathbb{R}^3} |\nabla (s*u)|^{\theta} + \int_{\mathbb{R}^3} |\nabla (s*u)|^2 + 5\int_{\mathbb{R}^3} |s*u|^2 |\nabla (s*u)|^2 -\frac{3}{2}\int_{\mathbb{R}^3}\tilde{H}(s*u) \\
=&\lim_{n\rightarrow \infty} \bigg[ \mu(1+\gamma_\theta) \int_{\mathbb{R}^3} |\nabla (s(u_n)*u_n)|^{\theta} + \int_{\mathbb{R}^3} |\nabla (s(u_n)*u_n)|^2  \\
&+ 5\int_{\mathbb{R}^3} |s(u_n)*u_n|^2 |\nabla (s(u_n)*u_n)|^2 -\frac{3}{2}\int_{\mathbb{R}^3}\tilde{H}(s(u_n)*u_n) \bigg] \\
= &\lim_{n\rightarrow \infty} P_\mu(s(u_n)*u_n)=0.
\end{align*}
Thus, we can conclude that $s(u)=s$ and $s(u_n) \rightarrow s(u)$ as $n \rightarrow \infty$.

(3) For any $y \in \mathbb{R}^3$, by changing variables in the integrals, we have
$$P_\mu(s(u)*u(\cdot + y)) = P_\mu(s(u)*u)=0,$$
thus $s(u(\cdot+ y)) = s(u)$ via (1). It is clear that
$$P_\mu(s(u)*(-u)) = P_\mu(-s(u)*u)=P_\mu(s(u)*u)=0,$$
hence $s(-u)=s(u).$
\end{proof}

\begin{remark}
In special case $h(t)=|t|^{p-2}t$, it is easy to analyze the properties of $P_\mu$, while it is much more complicated for general $h$, and we need $(h_5')$ to get uniqueness of $s(u)$. As for the continuity, the implicit function theorem is a good tool. However, it is not possible to solve the problem for general $h$, and we need to consider new ways to solve the problem.
\end{remark}

Set
$$A_\mu^k:=\left\{u \in \mathcal{S}(a) \;\bigg|\;\mu\int_{\mathbb{R}^3} |\nabla u|^{\theta}+\int_{\mathbb{R}^3} |\nabla u|^2+\int_{\mathbb{R}^3} |u|^2 |\nabla u|^2\leq k\right\},$$
$$\partial A_\mu^k:=\left\{u \in \mathcal{S}(a)\;\bigg|\; \mu\int_{\mathbb{R}^3} |\nabla u|^{\theta}+\int_{\mathbb{R}^3} |\nabla u|^2+\int_{\mathbb{R}^3} |u|^2 |\nabla u|^2=k\right\}.$$

\begin{lemma}\label{lem2.5}
If h satisfies $(h_1)-(h_4)$ and $(h_5')$,  for $\mu \in (0,1]$, we have  \\
(1) $\inf_{u \in \mathcal{P}_\mu(a)} I_\mu(u) \geq \frac{\zeta}{8}>0$;  \\
(2) there exists a small $k_0>0$ independent of $\mu$ such that
$$\inf_{u \in \partial A_\mu^k}  P_\mu(u) >0 \;for \;any\; 0<k\leq k_0, $$
$$\sup_{u\in A_\mu^{k_0}} I_\mu(u) <\inf_{u \in \mathcal{P}_\mu(a)} I_\mu(u);$$  \\
(3) let $\{u_n\} \subset \mathcal{P}_\mu(a)$, if  $\sup_{n \in \mathbb{N}^+} I_\mu(u_n)<+\infty$, then there exists a constant $C>0$ such that
$$ \mu \int_{\mathbb{R}^3} |\nabla u_n|^{\theta}\leq C,\; \int_{\mathbb{R}^3} |\nabla u_n|^2 \leq C,\;  \int_{\mathbb{R}^3} |u_n|^2 |\nabla u_n|^2 \leq C, \;for\;any \;n \in \mathbb{N}^+.$$
\end{lemma}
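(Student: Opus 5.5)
For (1), I will use the scaling structure of Lemma \ref{lem2.4} together with the small-energy estimates of Lemma \ref{lem2.2}. Take $u\in\mathcal{P}_\mu(a)$. By Lemma \ref{lem2.4}(1) and the uniqueness of $s(u)$, we have $s(u)=0$, so
$$I_\mu(u)=\max_{s\in\mathbb{R}}I_\mu(s*u).$$
The quantity $D(s):=\int|\nabla(s*u)|^2+\int|s*u|^2|\nabla(s*u)|^2=e^{2s}\int|\nabla u|^2+e^{5s}\int|u|^2|\nabla u|^2$ is continuous and strictly increasing in $s$, tends to $0$ as $s\to-\infty$ and to $+\infty$ as $s\to+\infty$. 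Hence there is $s_0$ with $D(s_0)=\zeta/2$. For $s\le s_0$ we have $D(s)<\zeta$, so Lemma \ref{lem2.2} gives $P_\mu(s*u)>0$. Therefore the unique zero $s(u)=0$ satisfies $0>s_0$, and by \eqref{eq2-10},
$$I_\mu(u)\ge I_\mu(s_0*u)\ge \tfrac14\cdot\tfrac{\zeta}{2}=\tfrac{\zeta}{8}.$$
This bound is independent of $\mu$ and $a$.

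For (2), fix $k\le k_0<\zeta$. If $u\in\partial A^k_\mu$, then $\int|\nabla u|^2+\int|u|^2|\nabla u|^2\le k<\zeta$, so \eqref{eq2-10-1} gives $P_\mu(u)\ge\frac12(\int|\nabla u|^2+\int|u|^2|\nabla u|^2)$. The strict positivity of the infimum needs care, because this quantity could be small when $\mu\int|\nabla u|^\theta$ carries most of $k$. I would instead bound $P_\mu$ from below by $\min\{1+\gamma_\theta,\tfrac12\}\cdot k$. The term $\mu(1+\gamma_\theta)\int|\nabla u|^\theta$ appears in $P_\mu$ with a positive coefficient, and the $\tilde H$ term is absorbed by $\frac14$ of the $H^1$-type part, as in the proof of Lemma \ref{lem2.2}. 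This yields $\inf_{\partial A^k_\mu}P_\mu\ge ck>0$. For the second inequality, the upper bound in \eqref{eq2-10} gives, for $u\in A^{k_0}_\mu$,
$$I_\mu(u)\le\max\{1,1/\theta\}\,k_0=k_0.$$
Choosing $k_0<\min\{\zeta,\zeta/8\}$ (strictly less than $\zeta/8$) then gives $\sup_{A^{k_0}_\mu}I_\mu<\inf_{\mathcal{P}_\mu(a)}I_\mu$ by (1). All constants are independent of $\mu\in(0,1]$.

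For (3), which I expect to be the main obstacle (this is where Lemma \ref{lem2.1} is needed), I would compute $I_\mu(u)-\frac{3}{16}P_\mu(u)$ for $u\in\mathcal{P}_\mu(a)$. The coefficient $\frac{3}{16}$ is chosen so that the nonlinear part becomes $\frac{9}{32}\tilde H-H=\frac{9}{32}(h(t)t-\frac{32}{9}H)$. However, Lemma \ref{lem2.1} only gives $h(t)t>\frac{16}{3}H$, so with this choice the nonlinear part is nonnegative while the $\int|u|^2|\nabla u|^2$ coefficient is $1-\frac{15}{16}>0$. The gradient coefficients are $\frac12-\frac{3}{16}>0$ and $\frac{\mu}{\theta}-\frac{3\mu(1+\gamma_\theta)}{16}$. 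The latter is positive since $\theta(1+\gamma_\theta)=\frac{5\theta-6}{2}<\frac{16}{3}$ for $\theta<3$. Hence
$$I_\mu(u)\ge c\Big(\mu\!\int|\nabla u|^\theta+\int|\nabla u|^2+\int|u|^2|\nabla u|^2\Big)$$
for some $c>0$, and $\sup_n I_\mu(u_n)<\infty$ yields the uniform bounds.

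The delicate check is the nonlinear sign: $\frac{9}{32}(ht-2H)-H=\frac{9}{32}ht-\frac{25}{16}H$, which is nonnegative iff $ht\ge\frac{50}{9}H$. That is slightly more than the $\frac{16}{3}H$ provided by Lemma \ref{lem2.1}, so the coefficient must be tuned. I would take the multiplier $\beta$ to be the largest value satisfying $\beta\cdot5<1$ and check the nonlinear term via $\frac{3}{2}\beta\tilde H-H\ge0$, which requires $ht\ge(2+\frac{2}{3\beta})H$. This is compatible with $ht>\frac{16}{3}H$ only when $\beta\ge\frac15$, the borderline case. I would therefore take $\beta=\frac15$, which kills the quasilinear term exactly and leaves the nonlinear part strictly positive, with positive coefficients $\frac{3}{10}$ and $\frac{\mu}{\theta}-\frac{\mu(1+\gamma_\theta)}{5}$ on the other terms. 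The quasilinear term $\int|u_n|^2|\nabla u_n|^2$ would then be recovered from $P_\mu(u_n)=0$ combined with the Gagliardo–Nirenberg bound \eqref{GNI} and the growth conditions \eqref{eq2-3}–\eqref{eq2-4}. This recovery step is the genuinely cumbersome point flagged in the paper's remark on $p=\frac{16}{3}$.
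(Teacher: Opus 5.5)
Parts (1) and (2) follow the paper's argument. In (1) you rescale to the level $\zeta/2$ and use $I_\mu(u)=\max_s I_\mu(s*u)$. In (2) you absorb $\tilde H$ on $\partial A^k_\mu$ and take $k_0<\zeta/8$. The first half of (3), the combination $I_\mu-\frac15P_\mu$ together with Lemma \ref{lem2.1}, is also exactly the paper's. One small correction: $\zeta$ depends on $a$ through \eqref{GNI}. So $\zeta/8$ is uniform in $\mu$ but not in $a$, and it cannot be, in view of Lemma \ref{lem6.1}.

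You diverge from the paper on the bound for $\int|u_n|^2|\nabla u_n|^2$. The paper argues by contradiction. It normalizes $v_n=(-s_n)*u_n$, so that $s(v_n)=s_n\to+\infty$, and runs a vanishing/non-vanishing dichotomy. In the vanishing case it uses $I_\mu(s*v_n)\ge\frac13e^{2s}+o_n(1)$; otherwise it uses $(h_4)$ and Fatou's lemma.

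Your direct route works, but you should write it out, and the estimate that does the work is \eqref{eq2-3} (small coefficient on $|t|^{13/3}$), not \eqref{eq2-4}. Drop the nonnegative terms in $P_\mu(u_n)=0$. Then use \eqref{GNI} with $q=\frac{16}{3}$, where the exponent on the quasilinear term is exactly $1$, together with the Sobolev inequality:
\[
5\int_{\mathbb{R}^3}|u_n|^2|\nabla u_n|^2\le\frac32\int_{\mathbb{R}^3}\tilde H(u_n)\le C\varepsilon a^{\frac43}\int_{\mathbb{R}^3}|u_n|^2|\nabla u_n|^2+C_\varepsilon\Big(\int_{\mathbb{R}^3}|\nabla u_n|^2\Big)^3 .
\]
Choosing $\varepsilon$ small absorbs the first term. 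The second term is bounded by the first half of (3). With \eqref{eq2-4} the $L^{16/3}$ term would carry a large constant and could not be absorbed.

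So this step is not the ``cumbersome'' point you expected. The borderline exponent $\frac{16}{3}$ in $(h_5)$ only prevents $I_\mu-\beta P_\mu$ from controlling all three terms at once. Once $\|\nabla u_n\|_2$ is bounded, the smallness of $h$ at the origin from $(h_2)$ closes the estimate in two lines.

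Your version is shorter and quantitative. It is also visibly uniform in $\mu\in(0,1]$ and in masses $a_n\le a$, which is what Lemma \ref{lem4.1} later uses. The paper's compactness argument is longer, non-constructive, and reuses $(h_4)$. It gains nothing here, since $(h_2)$ is already required for Lemma \ref{lem2.2}. The discarded $\beta=\frac{3}{16}$ detour can simply be removed.
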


\begin{proof}
(1) For $u \in \mathcal{P}_\mu(a)$, we take $s$ such that
$$ e^{-2s}\int_{\mathbb{R}^3}|\nabla u|^2 + e^{-5s}\int_{\mathbb{R}^3} |u|^2 |\nabla u|^2=\frac{\zeta}{2}.$$
Set $\tilde{u}:=(-s)*u$, it is clear that
$$\int_{\mathbb{R}^3}|\nabla \tilde{u}|^2 + \int_{\mathbb{R}^3} |\tilde{u}|^2 |\nabla \tilde{u}|^2=\frac{\zeta}{2}<\zeta,$$
 by using of Lemma \ref{lem2.2}, we can deduce that
\begin{align*}
&\; I_\mu(u) =I_\mu(0*u) \geq I_\mu(-s*u)  \\
\geq&\; \frac{1}{4} \left(\int_{\mathbb{R}^3} |\nabla \tilde{u}|^2 +\int_{\mathbb{R}^3} |\tilde{u}|^2 |\nabla \tilde{u}|^2\right) \\
\geq&\; \frac{\zeta}{8} >0.
\end{align*}

(2) We can take a small $k'_0>0$ independent of $\mu$ such that
$$\sup_{u\in A_\mu^{k'_0}} I_\mu(u) \leq k'_0 < \inf_{u \in \mathcal{P}_\mu(a)} I_\mu(u),$$
and thus, there exists a $k_0<k_0'$ such that for any $u \in \partial A_\mu^k$ with $0<k\leq k_0$, we get
\begin{align*}
&\ P_\mu (u)  \\
=&\ \mu(1+\gamma_\theta) \int_{\mathbb{R}^3} |\nabla u|^{\theta} +\int_{\mathbb{R}^3} |\nabla u|^2 +5\int_{\mathbb{R}^3} |u|^2 |\nabla u|^2 -\frac{3}{2} \int_{\mathbb{R}^3} \tilde{H}(u) \\
\geq&\ \mu(1+\gamma_\theta) \int_{\mathbb{R}^3} |\nabla u|^{\theta} +\int_{\mathbb{R}^3} |\nabla u|^2 +5\int_{\mathbb{R}^3} |u|^2 |\nabla u|^2  \\
&\,- C\left[ \varepsilon+ C_\varepsilon \left(\int_{\mathbb{R}^3}|\nabla u|^2\right)^2\right]
\left(\int_{\mathbb{R}^3} |u|^2 |\nabla u|^2 + \int_{\mathbb{R}^3} |\nabla u|^2 \right) \\
\geq&\;  \left\{1- C\left[ \varepsilon + C_\varepsilon \left(\int_{\mathbb{R}^3}|\nabla u|^2\right)^2\right] \right\}
\left(\mu \int_{\mathbb{R}^3} |\nabla u|^{\theta} + \int_{\mathbb{R}^3} |\nabla u|^2 +\int_{\mathbb{R}^3} |u|^2 |\nabla u|^2\right) \\
\geq&\; \frac{1}{2}k>0
\end{align*}
and
$$\sup_{u\in A_\mu^{k_0}} I_\mu(u) \leq\sup_{u\in A_\mu^{k'_0}} I_\mu(u)< \inf_{u \in \mathcal{P}_\mu(a)} I_\mu(u).$$

(3) From $u_n \in \mathcal{P}_\mu(a)$ and Lemma \ref{lem2.1}, we can deduce that
\begin{align*}
&\ \mu(1+\gamma_\theta) \int_{\mathbb{R}^3} |\nabla u_n|^{\theta} +\int_{\mathbb{R}^3} |\nabla u_n|^2 +5\int_{\mathbb{R}^3} |u_n|^2 |\nabla u_n|^2 \\
=& \ \frac{3}{2} \int_{\mathbb{R}^3} \left[h(u_n)u_n-2H(u_n)\right] \\
>&\ 5\int_{\mathbb{R}^3} H(u_n).
\end{align*}
Obviously,
\begin{align*}
I_\mu(u_n)&=\frac{\mu}{\theta} \int_{\mathbb{R}^3} |\nabla u_n|^{\theta} +\frac{1}{2}\int_{\mathbb{R}^3} |\nabla u_n|^2 +\int_{\mathbb{R}^3} |u_n|^2 |\nabla u_n|^2 -\int_{\mathbb{R}^3} H(u_n) \\
&>\frac{5-\theta(1+\gamma_\theta)}{5\theta} \mu \int_{\mathbb{R}^3} |\nabla u_n|^{\theta} +\frac{3}{10}\int_{\mathbb{R}^3} |\nabla u_n|^2,
\end{align*}
which implies
$$ \mu \int_{\mathbb{R}^3} |\nabla u_n|^{\theta}\leq C\;\;\mbox{and} \; \int_{\mathbb{R}^3} |\nabla u_n|^2 \leq C.$$
Next, we prove that
$$\limsup_{n \rightarrow \infty}\int_{\mathbb{R}^3} |u_n|^2 |\nabla u_n|^2 \leq C.$$
Assume that up to a subsequence, $\int_{\mathbb{R}^3} |u_n|^2 |\nabla u_n|^2 \rightarrow +\infty$ as $n \rightarrow \infty$. Take $s_n$ such that
$$e^{-\theta(1+\gamma_\theta)s_n} \left(\mu \int_{\mathbb{R}^3} |\nabla u_n|^{\theta}\right) +e^{-2s_n} \int_{\mathbb{R}^3} |\nabla u_n|^2 + e^{-5s_n}\int_{\mathbb{R}^3} |u_n|^2 |\nabla u_n|^2=1.$$
Set $v_n:=(-s_n)*u_n$, it is clear that
$$ \mu \int_{\mathbb{R}^3} |\nabla v_n|^{\theta} + \int_{\mathbb{R}^3} |\nabla v_n|^2 +\int_{\mathbb{R}^3} |v_n|^2 |\nabla v_n|^2=1,$$
  $$\int_{\mathbb{R}^3} |v_n|^2= \int_{\mathbb{R}^3} |u_n|^2=a^2\;\; \mbox{and}\;\; s(v_n)=s_n \rightarrow +\infty.$$
Let
$$\rho :=\limsup_{n \rightarrow \infty} \sup_{y \in \mathbb{R}^3} \int_{B_1(y)} |v_n|^2.$$

\textbf{Case 1:}
 If $\rho=0$, then $v_n \rightarrow 0$ in $L^{\frac{16}{3}}(\mathbb{R}^3)$ by \cite[Lemma 1.21]{Willem}. It follows from \eqref{eq2-4} that for any $\varepsilon >0$, we have $\int_{\mathbb{R}^3} H(v_n)\leq \varepsilon \int_{\mathbb{R}^3} |v_n|^{6} +C_\varepsilon\int_{\mathbb{R}^3} |v_n|^{\frac{16}{3}}$. It is easy to see that $C_\varepsilon \int_{\mathbb{R}^3} |v_n|^{\frac{16}{3}}<\varepsilon$ for any $n>0$ large enough. As a result,
\begin{align*}
    \int_{\mathbb{R}^3} H(v_n) &\leq \varepsilon \int_{\mathbb{R}^3} |v_n|^{6} +C_\varepsilon\int_{\mathbb{R}^3} |v_n|^{\frac{16}{3}} \\
    &\leq \varepsilon C \left(\int_{\mathbb{R}^3}|\nabla v_n|^2 \right)^3 +\varepsilon \\
    &\leq \left(C +1\right)\varepsilon,
\end{align*}
which means
\begin{equation}\label{eq02-12}
    \lim_{n \rightarrow \infty} e^{-3s}\int_{\mathbb{R}^3} H\Big(e^{\frac{3}{2}s}v_n\Big)=0 \;\; \mbox{for} \;\; \mbox{any}\;\; s>0.
\end{equation}
Since $P_\mu(s(v_n)*v_n)=P_\mu(u_n)=0$, we obtain that for $s>0$,
\begin{align*}
    C&\geq I_\mu(s(v_n)*v_n) \geq I_\mu(s*v_n) \\
      &= \frac{1}{\theta} e^{\theta(1+\gamma_\theta)s} \left(\mu \int_{\mathbb{R}^3} |\nabla v_n|^{\theta}\right) + \frac{1}{2}e^{2s} \int_{\mathbb{R}^3} |\nabla v_n|^2 + e^{5s}\int_{\mathbb{R}^3} |v_n|^2 |\nabla v_n|^2 -e^{-3s}\int_{\mathbb{R}^3}  H\left(e^{\frac{3}{2}s}v_n\right) \\
      &\geq \frac{1}{3}e^{2s} \left(\mu \int_{\mathbb{R}^3} |\nabla v_n|^{\theta} + \int_{\mathbb{R}^3} |\nabla v_n|^2 +\int_{\mathbb{R}^3} |v_n|^2 |\nabla v_n|^2\right) +o_n(1) \\
      &=\frac{1}{3}e^{2s} +o_n(1).
\end{align*}
Clearly, this leads to contradiction for $s>\max\left\{0,\frac{\ln(3C)}{2}\right\}$.

\textbf{Case 2:}
    If $\rho >0$, up to a subsequence, we may assume the existence of $\{y_n\} \subset \mathbb{R}^3$ such that
    $$\int_{B_1(y_n)} |v_n|^2> \frac{\rho}{2},$$
by changing variables in the integrals, we have
 $$\int_{B_1(0)} |v_n(\cdot +y_n)|^2> \frac{\rho}{2}.$$
Since $\{v_n(\cdot +y_n)\}$ is bounded in $H^1(\mathbb{R}^3)$, we suppose, up to a subsequence
\begin{align*}
 &v_n(\cdot +y_n) \rightharpoonup w     \;\;\;\,\;\;\; \;\;\;\;\;\;\;\;\;\;\;  \mbox{in} \; H^1(\mathbb{R}^3) , \\
 &v_n(\cdot +y_n) \rightarrow  w \neq 0       \;\;\;\;\;\;\;\;\;\;\;   \mbox{in} \;L^2_{loc}(\mathbb{R}^3),\\
 &v_n(\cdot +y_n) \rightarrow  w   \;\;\;\;\;\;\;\;\;\;\;\;\; \;\;\;\; \,             \mbox{a.e.} \; \mbox{in} \; \mathbb{R}^3.
\end{align*}
Set $z_n:=v_n(\cdot +y_n)$. Thus, Lemma \ref{lem2.4}(3) gives us that
$$s(z_n)=s(v_n(\cdot +y_n))=s(v_n) \rightarrow + \infty.$$
From $(h_4)$, Lemma \ref{lem2.1} and the Fatou's lemma, we get
$$\lim_{n \rightarrow +\infty} e^{-8s(z_n)}\int_{\mathbb{R}^3}H\left(e^{\frac{3}{2}s(z_n)}z_n\right)= +\infty.$$
As a result,
\begin{align*}
0\leq&\ e^{-5s(z_n)} I_\mu(s(z_n)*z_n) \\
\leq&\ \frac{1}{\theta}e^{-[5-\theta(1+\gamma_\theta)]s(z_n)}  \left(\mu \int_{\mathbb{R}^3} |\nabla z_n|^{\theta}\right) + \frac{1}{2}e^{-3s(z_n)} \int_{\mathbb{R}^3} |\nabla z_n|^2   \\
&\,+ \int_{\mathbb{R}^3} |z_n|^2 |\nabla z_n|^2 -e^{-8s(z_n)}\int_{\mathbb{R}^3}H\left(e^{\frac{3}{2}s(z_n)}z_n\right)  \\
\rightarrow&\, -\infty,
\end{align*}
as $n \rightarrow \infty$, a contradiction. Therefore,
$$\limsup_{n \rightarrow \infty}\int_{\mathbb{R}^3} |u_n|^2 |\nabla u_n|^2 \leq C.$$
So the conclusion holds.

\end{proof}

\begin{remark}
Lemma \ref{lem2.5}(3) implies that $I_\mu$ is coercive on $\mathcal{P}_\mu(a)$, and the proof is strongly dependent on Lemma \ref{lem2.1}. If $\tilde{h}(t)t>p\tilde{H}(t)$ with $p \in \left(\frac{16}{3},6\right)$, it is easy for us to etablish the coerciveness
of $I_\mu$ by the Pohozaev identity. However, the coerciveness becomes more cumbersome  for $p=\frac{16}{3}$ in assumption $(h_5)$, so we use a new approach to get it.
\end{remark}

\section{Palais-Smale sequence}\label{Palais-Smale}

In general, a Palais-Smale type compactness condition is needed. However, it seems impossible to prove that $I_\mu$ has such a compactness property. To overcome this difficulty, we consider an auxiliary functional  $\Phi_\mu:\mathbb{R}\times \Upsilon \mapsto \mathbb{R}$ defined in \cite{J1997} by
\begin{equation}\label{eq2-11}
\Phi_\mu(s,u):=I_\mu(s*u) .
\end{equation}

We study $\Phi_\mu$ on the radial space $\mathbb{R} \times \mathcal{S}_r(a)$ with
$$\mathcal{S}_r(a):=\mathcal{S}(a)\cap \Upsilon_{r}.$$
It is easy to see that $\Phi_\mu$ is of class $\mathcal{C}^1$. By the symmetric critical point principle \cite{P1979}, a Palais-Smale sequence for $\Phi_\mu|_{\mathbb{R} \times \mathcal{S}_r(a)}$ is also a Palais-Smale sequence for $\Phi_\mu|_{\mathbb{R} \times \mathcal{S}(a)}$. We define the closed sublevel set and minimax level
\begin{equation}\label{eq2-12}
I_\mu^d:=\{u\in \mathcal{S}(a)\;|\;I_\mu(u)\leq d\},
\end{equation}
\begin{equation}\label{eq2-13}
m_\mu(a):=\inf_{\gamma \in \Gamma_\mu} \sup_{t \in [0,1]} \Phi_\mu(\gamma(t)),
\end{equation}
where
$$ \Gamma_\mu:=\left\{\gamma=(\alpha,\beta) \in \mathcal{C}([0,1],\mathbb{R}\times \mathcal{S}_r(a))\;| \;\gamma(0) \in \{0\}\times A_\mu^{k_0},\gamma(1) \in \{0\}\times I_\mu^0 \right\}.$$

\begin{definition}\label{def3.1}
\cite[Definition 3.1]{G1993} Let B be a closed subset of X. We say that a class $\mathcal{F}$ of compact subsets of X is a homotopy stable family with boundary B provided  \\
(a) every set in $\mathcal{F}$ contains B;  \\
(b) for any set A in $\mathcal{F}$ and any $\eta \in \mathcal{C}([0,1]\times X,X)$ satisfying $\eta(t,x)=x$ for all $(t,x)$ in $(\{0\}\times X)\cup ([0,1]\times B) $ we have that $\eta(1,A) \in\mathcal{F}.$
\end{definition}

\begin{proposition}\label{prop3.2}
\cite[Theorem 5.2]{G1993} Let $\phi$ be a $\mathcal{C}^1$-functional on a complete connected $\mathcal{C}^1$-Finsler manifold X and consider a homotopy stable family $\mathcal{F}$ with an extended closed boundary B. Set $c=c(\phi,\mathcal{F})$ and let F be a closed subset of X satisfying
\begin{equation}\label{eq3.4}
A\cap F\backslash B\neq\emptyset \;\;\; for \;all \;A \in \mathcal{F}
\end{equation}
and
\begin{equation}\label{eq3.5}
\sup\phi(B)\leq c \leq\inf\phi(F).
\end{equation}
Then for any sequence of sets $\{A_n\} \subset \mathcal{F} $ satisfying $\lim_{n\rightarrow \infty} \sup_{A_n} \phi=c$, there exists a sequence $x_n \subset X \backslash B$ such that  \\
(1) $\lim_{n\rightarrow\infty}\phi(x_n)=c;$  \\
(2) $\lim_{n\rightarrow\infty}\|\phi'(x_n)\|=0;$  \\
(3) $\lim_{n\rightarrow\infty}{\rm dist}(x_n,F)=0;$  \\
(4) $\lim_{n\rightarrow\infty}{\rm dist}(x_n,A_n)=0.$
\end{proposition}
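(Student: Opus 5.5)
The natural route is the one Ghoussoub uses: combine Ekeland's variational principle, applied on the space of admissible sets, with a deformation argument localized near $F$. Equip the family of nonempty compact subsets of $X$ with the Hausdorff distance $\delta$ induced by the Finsler metric. Let $\overline{\mathcal{F}}$ be the closure of $\mathcal{F}$ in this complete metric space, and consider the lower semicontinuous functional $\Psi(A):=\sup_{A}\phi$, which is bounded below on $\overline{\mathcal{F}}$ by $c$. One first checks that $\overline{\mathcal{F}}$ is still homotopy stable with boundary $B$ and still satisfies \eqref{eq3.4}; this uses that $F$ is closed and the sets are compact. Consequently $c(\phi,\overline{\mathcal{F}})=c$.

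Given $\{A_n\}$ with $\varepsilon_n^2:=\sup_{A_n}\phi-c\to 0$, apply Ekeland's principle to $\Psi$ on $\overline{\mathcal{F}}$. This yields $A_n'\in\overline{\mathcal{F}}$ with the following three properties:
\begin{itemize}
\item $\Psi(A_n')\le\Psi(A_n)$;
\item $\delta(A_n,A_n')\le\varepsilon_n$;
\item $\Psi(A)\ge\Psi(A_n')-\varepsilon_n\delta(A,A_n')$ for all $A\in\overline{\mathcal{F}}$.
\end{itemize}
Next consider the compact set
$$K_n:=\{x\in A_n':\ \phi(x)\ge c-\varepsilon_n,\ {\rm dist}(x,F)\le \varepsilon_n\}.$$
It is nonempty. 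Indeed, by \eqref{eq3.4} the set $A_n'$ meets $F\setminus B$, and $\phi\ge c$ on $F$ by \eqref{eq3.5}. The core claim is that some $x_n\in K_n\setminus B$ satisfies $\|\phi'(x_n)\|\le C\sqrt{\varepsilon_n}$. Once this holds, properties (1)--(4) follow: (1) from $c-\varepsilon_n\le\phi(x_n)\le c+\varepsilon_n^2$, (3) by construction, and (4) from $\delta(A_n,A_n')\le\varepsilon_n$.

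To prove the claim, argue by contradiction. Suppose $\|\phi'\|>C\sqrt{\varepsilon_n}$ on a neighbourhood of $K_n$ away from $B$. Take a locally Lipschitz pseudo-gradient field $V$ and a cutoff that equals $1$ near $K_n$, vanishes near $B$ and outside a small neighbourhood of $K_n$, and flow for a short time $t\sim\sqrt{\varepsilon_n}$. This gives a deformation $\eta$ fixing $B$ and $\{0\}\times X$, so $\eta(1,A_n')\in\overline{\mathcal{F}}$. It moves points by at most $t$, so $\delta(\eta(1,A_n'),A_n')\le t$. It decreases $\phi$ by roughly $t\cdot C\sqrt{\varepsilon_n}$ near $K_n$, and it never increases $\phi$ elsewhere. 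Points of $A_n'$ outside the neighbourhood of $K_n$ either already have $\phi<c-\varepsilon_n$ or lie at distance more than $\varepsilon_n$ from $F$.

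The main obstacle is this second type of point: those that are far from $F$ but near the top level. They are not pushed down, so $\sup\phi$ might not drop. To handle them, I would first use the Ekeland minimality together with \eqref{eq3.4} to show that the maximum of $\phi$ on $A_n'$ is essentially attained near $F$. I would try a second cutoff built from ${\rm dist}(\cdot,F)$ and compare $\Psi$ of the deformed set with the Ekeland inequality. Once this is done, $\Psi(\eta(1,A_n'))\le\Psi(A_n')-C'\varepsilon_n$ with $C'>\varepsilon_n$-Lipschitz slack, which contradicts the Ekeland inequality for $C$ large.

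The other delicate point is the boundary case $\sup\phi(B)=c$. Here the cutoff must vanish near $B$, and I would invoke the hypothesis that $B$ is an \emph{extended closed} boundary. That hypothesis is what guarantees that the points found lie in $X\setminus B$ while the deformed sets stay in the family, and I would check carefully that it provides exactly this.
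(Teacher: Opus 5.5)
The paper does not prove this proposition; it quotes it from Ghoussoub. So your sketch has to stand on its own, and as written it does not close. The decisive step is left open, and the route you suggest for it cannot work.

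\textbf{The localization near $F$.} Ekeland minimality of $\Psi(A)=\sup_A\phi$ says nothing about where on $A_n'$ the top level is attained. A near-optimal set may reach its maximum far from $F$, for instance by passing over a saddle at level $\approx c$ away from $F$. Pushing down only near $K_n$ then does not lower $\Psi$ at all. Your own numbers also show that Ekeland is not what is being contradicted. The flow lowers $\phi$ by about $C\varepsilon_n$ near $K_n$, while $\Psi(A_n')-c\le\varepsilon_n^2$ and $\Psi\ge c$ on the whole family. So the only thing such a deformation can violate is membership of the deformed set in the family, i.e.\ the intersection property \eqref{eq3.4}.

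That is the missing idea: aim the contradiction at \eqref{eq3.4} rather than at $\sup\phi$. Set $\varepsilon_n:=\max\{\sup_{A_n}\phi-c,1/n\}$ and assume $\|\phi'\|\ge 4\sqrt{\varepsilon_n}$ on
$N_n=\{|\phi-c|\le 2\varepsilon_n,\ {\rm dist}(\cdot,F)\le 2\sqrt{\varepsilon_n},\ {\rm dist}(\cdot,A_n)\le 2\sqrt{\varepsilon_n}\}$.
Flow $A_n$ itself, not an Ekeland set, for time $\sqrt{\varepsilon_n}$ along a speed-$\le 1$ pseudo-gradient field. Cut the field off so that it vanishes off $N_n$ and is fully active on $N_n'$, the same set with $2\varepsilon_n,2\sqrt{\varepsilon_n}$ replaced by $\varepsilon_n,\sqrt{\varepsilon_n}$.

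Suppose $y=\eta(\sqrt{\varepsilon_n},x)\in F$ with $x\in A_n$. Monotonicity gives $c\le\phi(y)\le\phi(\eta(s,x))\le\phi(x)\le c+\varepsilon_n$ for all $s$. The displacement bound keeps the trajectory within $\sqrt{\varepsilon_n}$ of both $x$ and $y$, so it stays in $N_n'$. Hence $\phi(y)\le\phi(x)-2\varepsilon_n<c$, a contradiction.

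Your ``main obstacle'' (points near the top level but far from $F$) is harmless here, because such points cannot reach $F$ in the available time. Note that the neighbourhood of $F$ must have the size of the flow time, $\sim\sqrt{\varepsilon_n}$, not $\varepsilon_n$ as in your $K_n$. With this argument neither Ekeland's principle nor the Hausdorff closure is needed, and (1)--(4) follow at once.

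\textbf{The limiting case.} The argument above fixes $B$ automatically only when $\sup\phi(B)<c$, because the field lives in $\{\phi\ge c-2\varepsilon_n\}$; that also gives $x_n\notin B$ for free. The case $\sup\phi(B)=c$ is the genuinely delicate part of Ghoussoub's theorem, and your proposal leaves it open. A cutoff vanishing on $B$ makes trajectories near $B$ arbitrarily slow, and the point of $F\setminus B$ furnished by \eqref{eq3.4} may lie arbitrarily close to $B$.

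The hypothesis you plan to invoke does not help. An ``extended'' boundary is a \emph{weakening} of Definition~\ref{def3.1}: only the stability property (b) is required, not that every $A\in\mathcal F$ contains $B$. That is exactly why it applies to the path family $\Gamma_\mu$, whose images do not contain $B$.

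Your claim that $\overline{\mathcal F}$ inherits \eqref{eq3.4} is likewise unjustified in this case, since limits of points of $A_k\cap F\setminus B$ may land in $B$. It is fine when $\sup\phi(B)<c$, because then $F\cap B=\emptyset$.

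For the use made in the paper only the strict case is needed. There $\sup\Phi_\mu(B)\le\max\{\sup_{A_\mu^{k_0}}I_\mu,0\}<m_\mu(a)$ by Lemma~\ref{lem2.5}(2) and Lemma~\ref{lem3.3}.
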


\begin{lemma}\label{lem-P}
Assume $h$ satisfies $(h_5)$ and $\mu \in(0,1]$, then $\mathcal{P}_\mu(a)$ is a $\mathcal{C}^1$-submanifold of codimension 1 in $\mathcal{S}(a)$.
\end{lemma}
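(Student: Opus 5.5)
We write $\mathcal{P}_\mu(a)=\{u\in\Upsilon\,:\,G(u)=0,\ P_\mu(u)=0\}$ with $G(u):=\int_{\mathbb{R}^3}|u|^2-a^2$. Since $P_\mu$ and $G$ are $\mathcal{C}^1$ on $\Upsilon$ (here $\tilde H\in\mathcal{C}^1$ by $(h_5)$, and the growth bounds \eqref{eq2-3}--\eqref{eq2-4} together with \eqref{GNI} make $\int\tilde H(u)$ differentiable), it suffices to show that $dG(u)$ and $dP_\mu(u)$ are linearly independent at every $u\in\mathcal{P}_\mu(a)$. Equivalently, the map $(G,P_\mu):\Upsilon\to\mathbb{R}^2$ is a submersion there. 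By the implicit function theorem, $\mathcal{P}_\mu(a)$ is then a $\mathcal{C}^1$ submanifold of codimension 2 in $\Upsilon$, hence of codimension 1 in $\mathcal{S}(a)$. We should also check that $\mathcal{P}_\mu(a)\neq\emptyset$, which follows from Lemma \ref{lem2.4}(1).

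We argue by contradiction. Suppose $dP_\mu(u)=\nu\, dG(u)$ for some $\nu\in\mathbb{R}$ at some $u\in\mathcal{P}_\mu(a)$. Then $u$ weakly solves
\[
-\mu(1+\gamma_\theta)\theta\,\Delta_\theta u-2\Delta u-10u\Delta(u^2)-\tfrac32\tilde h(u)=2\nu u
\]
(up to normalization of constants; the quasilinear term's derivative is $10(u|\nabla u|^2-\mathrm{div}(u^2\nabla u))$). Rather than pass through the Pohozaev identity of this equation, which needs regularity, we exploit the dilation structure directly. The curve $s\mapsto s*u$ lies in $\mathcal{S}(a)$, so $\frac{d}{ds}G(s*u)|_{s=0}=0$. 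The assumed relation then gives
\[
\frac{d}{ds}\Big|_{s=0}P_\mu(s*u)=dP_\mu(u)[\partial_s(s*u)|_{s=0}]=\nu\,dG(u)[\partial_s(s*u)|_{s=0}]=0.
\]
This needs $s\mapsto s*u$ to be $\mathcal{C}^1$ into $\Upsilon$, which fails in general. So I would instead compute $\frac{d}{ds}P_\mu(s*u)$ directly from the explicit formula in Lemma \ref{lem2.4}, and justify the chain rule by density: test the Lagrange relation with $\partial_s(s*u)=\tfrac32u+x\cdot\nabla u$ after approximating by smooth compactly supported functions. This justification is the main obstacle. If it is too delicate, the fallback is to use the Euler--Lagrange equation and derive its Pohozaev identity by the standard truncation argument.

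Granting that $\frac{d}{ds}P_\mu(s*u)|_{s=0}=0$, we compute
\[
\frac{d}{ds}\Big|_{s=0}P_\mu(s*u)=\mu\theta(1+\gamma_\theta)^2\!\int|\nabla u|^\theta+2\!\int|\nabla u|^2+25\!\int|u|^2|\nabla u|^2-\tfrac94\!\int\big(\tilde h(u)u-2\tilde H(u)\big).
\]
Subtract $5P_\mu(u)=0$:
\[
0=\mu(1+\gamma_\theta)\big(\theta(1+\gamma_\theta)-5\big)\!\int|\nabla u|^\theta-3\!\int|\nabla u|^2-\tfrac94\!\int\Big(\tilde h(u)u-\tfrac{16}{3}\tilde H(u)\Big).
\]
Here $\theta(1+\gamma_\theta)=\tfrac{5\theta}{2}-3<5$ because $\theta<3$. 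So the first two terms are nonpositive, and the second is strictly negative since $u\neq0$. The last term is strictly negative by $(h_5)$. The right-hand side is therefore $<0$, a contradiction. Hence $dG(u)$ and $dP_\mu(u)$ are independent, and the lemma follows.
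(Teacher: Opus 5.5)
Your argument is essentially the paper's. The paper also assumes $dP_\mu(u)$ and $dG(u)$ (the paper's $dL(u)$) are linearly dependent and tests the resulting relation with $u$ and $x\cdot\nabla u$, i.e. with the dilation generator $\frac32u+x\cdot\nabla u$. This gives exactly your identity for $\frac{d}{ds}P_\mu(s*u)$ at $s=0$; the paper then subtracts $5P_\mu(u)=0$ and concludes from $\theta(1+\gamma_\theta)<5$ and $(h_5)$. The paper does that testing step in one line (``similar to the Pohozaev identity'') and does not address the regularity issue you correctly flag. Density alone would not resolve it either, since approximating $x\cdot\nabla u$ in $\Upsilon$ already requires second-derivative control of $u$. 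So your proposal is at least as complete as the published proof.
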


\begin{proof}
We define $L(a):=\int_{\mathbb{R}^3}|u|^2-a^2$, clearly $L \in \mathcal{C}^1 (\Upsilon)$.

\textbf{Claim:}
 $d(P_\mu,L): \Upsilon \mapsto \mathbb{R}^2$ is surjective.

If $dP_\mu(u)$ and $dL(u)$ are linearly dependent, then there exists a $l \in \mathbb{R}$ such that
\begin{align*}
&\;2l \int_{\mathbb{R}^3}u\varphi  \\
=&\;\theta \mu (1+\gamma_\theta) \int_{\mathbb{R}^3} |\nabla u|^{\theta-2}\nabla u \cdot \nabla \varphi
+2\int_{\mathbb{R}^3} \nabla u \cdot \nabla \varphi
+10\int_{\mathbb{R}^3}\left(|u|^2 \nabla u \cdot \nabla \varphi +u\varphi|\nabla u|^2\right)  \\
&-\frac{3}{2}\int_{\mathbb{R}^3}\tilde{h}(u)\varphi 
\end{align*}
for any $\varphi \in \Upsilon$. Similar to the Pohozaev identity, let $\varphi=u$ and $\varphi=x\cdot \nabla u$, we obtain
\begin{align*}
&\theta\mu (1+\gamma_\theta)^2 \int_{\mathbb{R}^3} |\nabla u|^{\theta}  +2\int_{\mathbb{R}^3} |\nabla u|^2
+25\int_{\mathbb{R}^3}|u|^2|\nabla u|^2
-\frac{9}{4}\int_{\mathbb{R}^3}\Big[\tilde{h}(u)u-2\tilde{H}(u)\Big]=0.
\end{align*}
Due to $P_\mu (u)=0 $, we conclude
\begin{align*}
&[5-\theta (1+\gamma_\theta)](1+\gamma_\theta)\mu \int_{\mathbb{R}^3} |\nabla u|^{\theta}  +3\int_{\mathbb{R}^3} |\nabla u|^2
+\frac{9}{4}\int_{\mathbb{R}^3}\left[\tilde{h}(u)u-\frac{16}{3}\tilde{H}(u)\right]=0.
\end{align*}
By $(h_5)$, we have $u=0$. This contradicts with $u \in \mathcal{S}(a).$

\end{proof}

\begin{lemma}\label{lem3.3}
If h satisfies $(h_1)-(h_4)$ and $(h_5')$, then for $\mu \in (0,1]$, we have
 $$m_\mu(a)=\inf_{u \in \mathcal{P}_\mu(a)} I_\mu(u).$$
\end{lemma}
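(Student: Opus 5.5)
We prove the two inequalities $m_\mu(a)\le \inf_{\mathcal{P}_\mu(a)} I_\mu$ and $m_\mu(a)\ge \inf_{\mathcal{P}_\mu(a)} I_\mu$ separately. The first uses an explicit path built from the fibre maps $s\mapsto s*u$. The second is a crossing argument along paths in $\Gamma_\mu$.

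\emph{Upper bound.} Since $\Gamma_\mu$ consists of paths in the radial set $\mathbb{R}\times\mathcal{S}_r(a)$, I would first reduce the right-hand infimum to radial functions. For $u\in\mathcal{P}_\mu(a)$, let $u^*$ be its Schwarz symmetrization; we may take $|u|$ first, which is allowed since $h$ is odd. By P\'olya--Szeg\H{o} (applied also to $|u|^2|\nabla u|^2=\frac14|\nabla u^2|^2$ and to $\int|\nabla u|^\theta$), all three gradient terms do not increase, while $\int H(u)$ and the $L^2$ norm are preserved. Hence $I_\mu(s*u^*)\le I_\mu(s*u)$ for every $s$. By Lemma \ref{lem2.4}(1),
$$I_\mu(s(u^*)*u^*)\le I_\mu(s(u^*)*u)\le \max_s I_\mu(s*u)=I_\mu(u).$$
So it suffices to treat $u\in\mathcal{P}_\mu(a)\cap\mathcal{S}_r(a)$.

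For such $u$, Lemma \ref{lem2.3} gives $s_1\ll0$ with $s_1*u\in A_\mu^{k_0}$ and $s_2\gg0$ with $I_\mu(s_2*u)\le0$. Both memberships follow from the explicit $e^{cs}$ scaling of the gradient terms. Define
$$\gamma(t):=\bigl(0,((1-t)s_1+ts_2)*u\bigr).$$
This path is continuous into $\mathbb{R}\times\mathcal{S}_r(a)$, so $\gamma\in\Gamma_\mu$. Moreover $\sup_t\Phi_\mu(\gamma(t))\le\max_s I_\mu(s*u)=I_\mu(u)$, again by Lemma \ref{lem2.4}(1). This gives $m_\mu(a)\le\inf_{\mathcal{P}_\mu(a)} I_\mu$.

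\emph{Lower bound.} Take any $\gamma=(\alpha,\beta)\in\Gamma_\mu$ and set $\tilde\gamma(t):=\alpha(t)*\beta(t)$. Note that $\Phi_\mu(\gamma(t))=I_\mu(\tilde\gamma(t))$ and $P_\mu(\tilde\gamma(t))=\partial_sI_\mu(s*\beta(t))|_{s=\alpha(t)}$, and $\tilde\gamma$ is continuous into $\mathcal{S}(a)$.

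At $t=0$ we have $\tilde\gamma(0)=\beta(0)\in A_\mu^{k_0}$. Every $v\in A_\mu^{k_0}$ satisfies $P_\mu(v)>0$: if $v\in\partial A_\mu^{k}$ with $k\le k_0$, this is Lemma \ref{lem2.5}(2). Equivalently, by uniqueness in Lemma \ref{lem2.4}(1), $s(\tilde\gamma(0))>0$. Indeed, $I_\mu(\cdot*v)$ is increasing before its unique maximum point and decreasing after it, so the sign of $P_\mu(v)$ equals the sign of $s(v)$.

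At $t=1$ we have $I_\mu(\tilde\gamma(1))\le0$. Since $I_\mu>0$ on $\mathcal{P}_\mu(a)$ by Lemma \ref{lem2.5}(1) and $I_\mu(\cdot*v)$ is increasing on $(-\infty,s(v))$ with positive values there, we cannot have $s(\tilde\gamma(1))\ge0$; hence $s(\tilde\gamma(1))<0$.

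By Lemma \ref{lem2.4}(2), $t\mapsto s(\tilde\gamma(t))$ is continuous, so there is $t_0$ with $s(\tilde\gamma(t_0))=0$, i.e. $\tilde\gamma(t_0)\in\mathcal{P}_\mu(a)$. Therefore
$$\sup_t\Phi_\mu(\gamma(t))\ge I_\mu(\tilde\gamma(t_0))\ge\inf_{\mathcal{P}_\mu(a)}I_\mu,$$
and taking the infimum over $\gamma\in\Gamma_\mu$ gives the reverse inequality.

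\emph{Main obstacle.} The delicate step is the sign at $t=0$ for interior points of $A_\mu^{k_0}$, not only boundary points. I would handle it by noting that the computation in Lemma \ref{lem2.5}(2) gives $P_\mu(v)\ge\frac12 k$ for $v\in\partial A_\mu^k$ with every $k\le k_0$, so $P_\mu>0$ on all of $A_\mu^{k_0}$. The symmetrization step in the upper bound is routine but needs care with the $W^{1,\theta}$ term. If it proves troublesome, I would alternatively interpret the infimum over the radial Pohozaev set, since only that version is used later.
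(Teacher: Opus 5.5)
Your proof is correct and follows the paper's argument. The lower bound is the same crossing argument along $t\mapsto \alpha(t)*\beta(t)$; you apply the intermediate value theorem to $t\mapsto s(\alpha(t)*\beta(t))$ via Lemma \ref{lem2.4}(2), while the paper applies it directly to $t\mapsto P_\mu(\alpha(t)*\beta(t))$, and the two are equivalent by the sign relation you verify. The upper bound uses the same fibre path $(0,((1-t)s_1+ts_2)*u)$ combined with symmetric rearrangement, and your explicit projection $I_\mu(s(u^*)*u^*)\le I_\mu(s(u^*)*u)\le I_\mu(u)$ supplies the step the paper only cites, since $u^*$ itself need not lie on $\mathcal{P}_\mu(a)$.
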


\begin{proof}
For any $\gamma=(\alpha,\beta) \in \Gamma_\mu$, we consider an auxiliary function
$$p(t):=P_\mu(\alpha(t)*\beta(t)).$$
By Lemma \ref{lem2.5}(2), we get that $p(0)=P_\mu(\beta(0))>0$. Lemma \ref{lem2.3} and Lemma \ref{lem2.4}(1) mean that $s(u)<0$ if and only if $P_\mu(u)<0$. Since $I_\mu(\beta(1)) \leq 0$, we have $s(\beta(1))<0$, which yields that $p(1)=P_\mu(\beta(1))<0$. Moreover, $p$ is continuous, thus there exists $\tau \in (0,1)$ such that $p(\tau)=0$, i.e., $\alpha(\tau)*\beta(\tau) \in \mathcal{P}_\mu(a)$.

As a result,
$$ \max_{t\in[0,1]}\Phi_\mu(\gamma(t)) \geq I_\mu(\alpha(\tau)*\beta(\tau)) \geq \inf_{u \in \mathcal{P}_\mu(a)} I_\mu(u).$$
Consequently,
$$m_\mu(a)\geq \inf_{u \in \mathcal{P}_\mu(a)} I_\mu(u).$$

If $u \in \mathcal{P}_\mu(a) \cap \Upsilon_r$, then there exist large $|\xi_1|$ and $|\xi_2|$ such that $\xi_1<0$, $\xi_2>0$ and
$(0,((1-t)\xi_1+t\xi_2)*u) \in \Gamma_\mu.$
By using of the symmetric decreasing rearrangement \cite{LL1997}, we deduce that
\begin{align*}
\inf_{u \in \mathcal{P}_\mu(a)} I_\mu(u) \geq \inf_{u \in \mathcal{P}_\mu(a) \cap \Upsilon_r} I_\mu(u) \geq \inf_{u \in \mathcal{P}_\mu(a) \cap \Upsilon_r} \max_{t \in [0,1]} I_\mu(((1-t)\xi_1+t\xi_2)*u) \geq m_\mu(a).
\end{align*}
Obviously, $m_\mu(a)=\inf_{u \in \mathcal{P}_\mu(a)} I_\mu(u).$

\end{proof}

\begin{remark}\label{rem3.5}
It is clear that $m_\mu(a)\geq \frac{\zeta}{8}>0$ and $m_\mu(a)>\sup_{u \in A^{k_0}_\mu}I_\mu(u)$.
\end{remark}

Next, we will prove the existence of the Palais-Smale sequence.

\begin{lemma} \label{lem3.4}
If h satisfies $(h_1)-(h_5)$, then for any fixed $\mu \in (0,1]$, there exists a sequence $\{u_n\} \subset \mathcal{S}_r(a)$ such that  \\
 (1) $I_\mu(u_n) \rightarrow m_\mu(a);$  \\
(2) $I_\mu|'_{\mathcal{S}(a)}(u_n) \rightarrow 0;$  \\
(3) $P_\mu(u_n) \rightarrow 0;$  \\
(4) $u^-_n \rightarrow 0 \; a.e. \; in \; \mathbb{R}^3.$
\end{lemma}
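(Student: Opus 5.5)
We follow Jeanjean's approach \cite{J1997} together with Ghoussoub's minimax principle, Proposition \ref{prop3.2}, applied to the augmented functional $\Phi_\mu$ on $X:=\mathbb{R}\times\mathcal{S}_r(a)$. This is a complete connected $\mathcal{C}^1$-Finsler manifold, and $\Phi_\mu$ is $\mathcal{C}^1$ on it. Take $\mathcal{F}:=\{\gamma([0,1])\,|\,\gamma\in\Gamma_\mu\}$ with boundary $B:=(\{0\}\times A_\mu^{k_0})\cup(\{0\}\times I_\mu^0)$ and $F:=\{(0,u)\,|\,u\in\mathcal{P}_\mu(a)\cap\mathcal{S}_r(a)\}$. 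Then $\mathcal{F}$ is a homotopy stable family with boundary $B$, since a deformation fixing $B$ preserves the endpoint conditions.

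The argument in the proof of Lemma \ref{lem3.3} shows that every $\gamma\in\Gamma_\mu$ meets $\mathcal{P}_\mu(a)$ after applying $\alpha(\tau)*$. To get condition \eqref{eq3.4}, we replace $\gamma=(\alpha,\beta)$ by $\tilde\gamma(t):=(0,\alpha(t)*\beta(t))$. This path still lies in $\Gamma_\mu$ after a harmless reparametrization: $\alpha(0)*\beta(0)$ need not lie in $A_\mu^{k_0}$, so we first use a homotopy and restrict to $\Phi_\mu$-equivalent paths, as in \cite{J1997}. Then $\tilde\gamma([0,1])\cap F\neq\emptyset$. The intersection point is not in $B$, because $I_\mu>0$ on $\mathcal{P}_\mu(a)$ and, by Lemma \ref{lem2.5}(2), $\sup_{A_\mu^{k_0}}I_\mu<\inf_{\mathcal{P}_\mu(a)}I_\mu$. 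Lemma \ref{lem3.3}, Remark \ref{rem3.5} and the fact $\sup_{I_\mu^0}I_\mu\le0$ give $\sup\Phi_\mu(B)\le m_\mu(a)\le\inf\Phi_\mu(F)$, which is \eqref{eq3.5}.

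We choose the minimizing sets $A_n=\gamma_n([0,1])$ carefully. Start from $\gamma_n=(\alpha_n,\beta_n)$ with $\max\Phi_\mu(\gamma_n)\le m_\mu(a)+1/n$. Since $\Phi_\mu(s,u)=\Phi_\mu(s,|u|)$ and $I_\mu(|u|)=I_\mu(u)$, and $|\cdot|$ is continuous on $\Upsilon$, we may replace $\gamma_n$ by $(0,\alpha_n*|\beta_n|)$. This path consists of nonnegative functions with $s$-component $0$. Proposition \ref{prop3.2} then yields $(s_n,w_n)$ with
\[
\Phi_\mu(s_n,w_n)\to m_\mu(a),\qquad \|\Phi_\mu'(s_n,w_n)\|\to0,\qquad {\rm dist}((s_n,w_n),A_n)\to0.
\]
In particular $s_n\to0$, and $w_n$ is close in $\Upsilon$ to a nonnegative function, hence $w_n^-\to0$ in $L^2$ and a.e. along a subsequence.

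Set $u_n:=s_n*w_n\in\mathcal{S}_r(a)$. Then $I_\mu(u_n)=\Phi_\mu(s_n,w_n)\to m_\mu(a)$, which is (1). Also $\partial_s\Phi_\mu(s_n,w_n)=P_\mu(u_n)\to0$, which is (3). For (2), we use the standard identity
\[
\langle I_\mu'(s_n*w_n),s_n*\varphi\rangle=\langle\partial_u\Phi_\mu(s_n,w_n),\varphi\rangle,
\]
valid for $\varphi$ in the tangent space $T_{w_n}\mathcal{S}(a)$. Because $s_n$ is bounded, the map $\varphi\mapsto s_n*\varphi$ is an isomorphism of tangent spaces with uniformly bounded norm in $\Upsilon$, so (2) holds for the radial constraint. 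The symmetric criticality principle \cite{P1979} extends it to $\mathcal{S}(a)$. Finally, $u_n^-=s_n*w_n^-$ and $s_n\to0$, so (4) follows.

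The main obstacle is the bookkeeping needed to make Proposition \ref{prop3.2} applicable. Specifically, one must check that $B$ is closed and that the modified paths stay in $\Gamma_\mu$. One must also control the $\Upsilon$-norm of $s_n*\varphi$ uniformly for the nonhomogeneous term $\int|\nabla u|^\theta$ and the quasilinear term. Since $|s_n|$ is bounded, the scaling factors $e^{cs_n}$ are bounded, so this control should follow.
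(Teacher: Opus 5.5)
Your overall scheme is the paper's: Proposition \ref{prop3.2} applied to $\Phi_\mu$ on $X=\mathbb{R}\times\mathcal{S}_r(a)$, with the same $\mathcal{F}$ and $B$, a minimizing sequence of paths $(0,\beta_n)$ with $\beta_n\ge0$, and $u_n:=s_n*w_n$. Your derivation of (1)--(4) from the conclusions of Proposition \ref{prop3.2} is fine. Your argument that one may take $\gamma_n=(0,|\alpha_n*\beta_n|)$ even fills in a step the paper only asserts.

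The gap is your choice $F=\{0\}\times(\mathcal{P}_\mu(a)\cap\mathcal{S}_r(a))$. Hypothesis \eqref{eq3.4} must hold for \emph{every} $A\in\mathcal{F}$, and for this $F$ it fails. Take $u\in\mathcal{P}_\mu(a)\cap\mathcal{S}_r(a)$ and $\xi_1<0<\xi_2$ as in Lemma \ref{lem3.3}, and consider $\gamma(t)=(\varepsilon\sin(\pi t),((1-t)\xi_1+t\xi_2)*u)$. This path lies in $\Gamma_\mu$. Its $s$-component vanishes only at $t=0,1$, where the $u$-component lies in $A_\mu^{k_0}$ or $I_\mu^0$, hence not in $\mathcal{P}_\mu(a)$. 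So $\gamma([0,1])\cap F=\emptyset$.

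Replacing $\gamma$ by $(0,\alpha*\beta)$ does not repair this. It verifies the intersection property for a different set, not for $\gamma([0,1])\in\mathcal{F}$. Shrinking $\mathcal{F}$ to paths with $\alpha\equiv0$ would destroy homotopy stability, since deformations of $X$ move the $s$-component. Incidentally, your worry that $\alpha(0)*\beta(0)\notin A_\mu^{k_0}$ is unfounded. By definition of $\Gamma_\mu$ we have $\alpha(0)=\alpha(1)=0$, so $(0,\alpha*\beta)\in\Gamma_\mu$ directly, with no homotopy or reparametrization needed.

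The fix is to choose $F$ compatibly with the $s$-action. The paper takes $F=\{(s,u)\,|\,\Phi_\mu(s,u)\ge m_\mu(a)\}$. Then $\gamma([0,1])\cap F\neq\emptyset$ by the very definition of $m_\mu(a)$. The intersection avoids $B$ because $\sup_B\Phi_\mu\le\max\{\sup_{A_\mu^{k_0}}I_\mu,0\}<m_\mu(a)$ by Remark \ref{rem3.5}, and \eqref{eq3.5} is then trivial.

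Alternatively, $F=\{(s,u)\,|\,s*u\in\mathcal{P}_\mu(a)\}$ works:
\begin{itemize}
\item it is closed;
\item the sign-change argument in the proof of Lemma \ref{lem3.3} gives a $\tau$ with $\gamma(\tau)\in F$ for every $\gamma\in\Gamma_\mu$;
\item $\gamma(\tau)\notin B$ by the same energy comparison;
\item $\inf_F\Phi_\mu=\inf_{\mathcal{P}_\mu(a)\cap\mathcal{S}_r(a)}I_\mu\ge m_\mu(a)$, by the path construction in Lemma \ref{lem3.3}.
\end{itemize}
With either choice, the rest of your argument goes through unchanged.
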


\begin{proof}
We will prove this lemma by applying Proposition \ref{prop3.2} to $\phi=\Phi_\mu,c=m_\mu(a)$ with
$$ X=\mathbb{R}\times \mathcal{S}_r(a),\;  B=\left(\{0\}\times A_\mu^{k_0}\right)\cup  \left(\{0\}\times I_\mu^0\right),$$
$$\mathcal{F}=\left\{A=\gamma([0,1])\;|\;\gamma \in \Gamma_\mu\right\},\;  F=\left\{(s,u)\;|\;\Phi_\mu(s,u) \geq m_\mu(a)\right\}.$$
Using Definition \ref{def3.1}, it is easy to check that $\mathcal{F}$ is a homotopy stable family of compact subsets of X with boundary B. According to the definition of $I_\mu^0$ and Remark \ref{rem3.5}, we get that assumption \eqref{eq3.4} is satisfied. Obviously, assumption \eqref{eq3.5} holds.

Therefore, we take a minimizing sequence $\{\gamma_n=(0,\beta_n)\} \subset \Gamma_\mu$ with $\beta_n \geq0$ a.e. in $\mathbb{R}^3$. It results that there exists a sequence $\{(s_n,w_n)\} \subset \mathbb{R}\times \mathcal{S}_r(a)$ for $\Phi_\mu|_{\mathbb{R}\times \mathcal{S}_r(a)}$ at level $m_\mu(a)$ such that
\begin{align}\label{eq3.6}
\begin{split}
     \left \{
     \begin{array}{ll}
        \Phi_\mu(s_n,w_n) \rightarrow m_\mu(a), \\
        \|\nabla_{\mathbb{R}\times \mathcal{S}_r(a)} \Phi_\mu(s_n,w_n) \| \rightarrow 0, \\
        |s_n|+{\rm dist}_\Upsilon(w_n,\beta_n([0,1]))  \rightarrow  0,
     \end{array}
     \right.
\end{split}
\end{align}
as $n \rightarrow \infty$.

 Setting $u_n=s_n*w_n$, the first condition and the third condition in \eqref{eq3.6} mean (1) and (4) hold.

 The second condition in \eqref{eq3.6} gives
\begin{align*}
\|I'_\mu|_{\mathcal{S}(a)}(u_n) \|_* &=\sup_{\psi \in T_{u_n}\mathcal{S}(a),\|\psi\|_\Upsilon\leq1} |I'_\mu(u_n)[\psi]| \\
&=\sup_{\psi \in T_{u_n}\mathcal{S}(a),\|\psi\|_\Upsilon\leq1} |I'_\mu(s_n*w_n)[s_n*(-s_n)*\psi]|  \\
&=\sup_{\psi \in T_{u_n}\mathcal{S}(a),\|\psi\|_\Upsilon\leq1} |\partial_u \Phi_\mu(s_n,w_n)[(-s_n)*\psi]|  \\
&\leq  \|\partial_u \Phi_\mu(s_n,w_n)\| \sup_{\psi \in T_{u_n}\mathcal{S}(a),\|\psi\|_\Upsilon\leq1} \|(-s_n)*\psi\|  \\
&\leq C\|\partial_u \Phi_\mu(s_n,w_n)\| \rightarrow 0
\end{align*}
and
$$ |P_\mu(u_n)|=\left|\frac{\partial \Phi_\mu(s_n,w_n)}{\partial s}\right| \rightarrow 0,$$
as $n \rightarrow \infty$. The proof of (2) and (3) is complete.
\end{proof}

\begin{remark}
    In Lemma \ref{lem-P}, the stronger condition $(h_5)$
is required. Therefore, we can't weaken $(h_5)$ to $(h'_
5)$
by utilizing Proposition \ref{prop3.2} to get a Palais-Smale sequence.
\end{remark}

Now, we can get the critical point of $I_\mu|_{\mathcal{S}(a)}$.

\begin{lemma}\label{lem3.5}
If h satisfies $(h_1)-(h_5)$, then for any fixed $\mu \in (0,1]$, there exist a nonnegative $u_\mu \in \Upsilon_r \backslash \{0\}$ and a $\lambda_\mu \in \mathbb{R}$ such that  \\
(1) $I'_\mu(u_\mu) +\lambda_\mu u_\mu=0;$  \\
(2) $P_\mu(u_\mu)=0;$  \\
(3) $I_\mu(u_\mu)=m_\mu(a);$  \\
(4) $0<\|u_\mu\|_2 \leq a;$ furthermore, if $\lambda_\mu \neq 0$, then $\|u_\mu\|_2 = a.$
\end{lemma}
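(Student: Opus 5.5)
We start from the Palais--Smale sequence $\{u_n\}\subset\mathcal{S}_r(a)$ given by Lemma \ref{lem3.4}. Properties (1) and (3) there give $I_\mu(u_n)\to m_\mu(a)$ and $P_\mu(u_n)\to 0$.

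\textbf{Step 1: boundedness.} We repeat the argument of Lemma \ref{lem2.5}(3) with $P_\mu(u_n)=o_n(1)$ in place of $P_\mu(u_n)=0$. Combining $I_\mu(u_n)-\frac15 P_\mu(u_n)$ with Lemma \ref{lem2.1} bounds $\mu\|\nabla u_n\|_\theta^\theta$ and $\|\nabla u_n\|_2^2$. The bound on $\int|u_n|^2|\nabla u_n|^2$ then follows from the same rescaling/concentration dichotomy ($\rho=0$ versus $\rho>0$). Here we use that $s(u_n)\to 0$, since $P_\mu(u_n)\to0$ and $s(\cdot)$ is continuous. Alternatively, we can normalize via $v_n=(-s_n)*u_n$ and use the fact that $I_\mu(s*u_n)\le I_\mu(u_n)+o(1)$ for bounded $s$. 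Thus $\{u_n\}$ is bounded in $\Upsilon$ and in the quasilinear seminorm. Passing to a subsequence, $u_n\rightharpoonup u_\mu$ in $\Upsilon_r$ and a.e. By (4) of Lemma \ref{lem3.4}, $u_\mu\ge 0$.

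\textbf{Step 2: Lagrange multiplier and limit equation.} We set $\lambda_n:=-\frac1{a^2}I'_\mu(u_n)[u_n]$, so that $I'_\mu(u_n)+\lambda_n u_n\to 0$ in $\Upsilon^*$. By boundedness, $\lambda_n\to\lambda_\mu$ along a subsequence. To pass to the limit in the quasilinear and $\theta$-Laplacian terms, we test with $\varphi\in C_c^\infty$ and also with $\varphi e^{-u_n}$-type test functions. The standard device, as in \cite{JLW2015}, shows $\nabla u_n\to\nabla u_\mu$ a.e.; the $\theta$-term with $\theta>2$ is monotone, which helps. This yields $I'_\mu(u_\mu)+\lambda_\mu u_\mu=0$, which is (1).

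\textbf{Step 3: nontriviality.} Suppose $u_\mu=0$. Lemma \ref{lem02-2} gives $\int\tilde H(u_n)\to0$. Then $P_\mu(u_n)\to0$ forces all gradient terms to vanish, so $I_\mu(u_n)\to0$, contradicting $m_\mu(a)\ge\zeta/8$ (Remark \ref{rem3.5}).

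\textbf{Step 4: identities and energy.} Since $u_\mu$ solves the equation, the Pohozaev identity gives $P_\mu(u_\mu)=0$, which is (2). For the energy we compute
\[
I_\mu-\tfrac15P_\mu=\tfrac{5-\theta(1+\gamma_\theta)}{5\theta}\mu\int|\nabla\cdot|^\theta+\tfrac3{10}\int|\nabla\cdot|^2+\tfrac{3}{10}\int\big(h(\cdot)\cdot-\tfrac{16}3H(\cdot)\big).
\]
All three terms are nonnegative and weakly lower semicontinuous; the last one uses Fatou's lemma, or Lemma \ref{lem02-2} directly. Hence $I_\mu(u_\mu)\le m_\mu(a)$. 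For the reverse inequality, we have $\|u_\mu\|_2=:b\le a$ by weak lower semicontinuity, which already gives part of (4). If $b=a$, then $u_\mu\in\mathcal P_\mu(a)$ and equality follows from Lemma \ref{lem3.3}.

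\textbf{Step 5: the case $b<a$.} Testing the equation gives $\lambda_\mu(a^2-b^2)=\lim(\lambda_n\|u_n\|_2^2-\lambda_\mu\|u_\mu\|_2^2)$. Using $I'_\mu(u_n)[u_n]\to I'_\mu(u_\mu)[u_\mu]$, we aim to show $\lambda_\mu\|u_n-u_\mu\|_2^2\to 0$. This follows if the gradient terms converge strongly, which we get from testing $(I'_\mu(u_n)+\lambda_nu_n)[u_n-u_\mu]\to0$ together with monotonicity. Hence $\lambda_\mu\ne0$ forces $b=a$, completing (4). When $\lambda_\mu=0$, we still obtain $I_\mu(u_\mu)=m_\mu(a)$ from the strong convergence of the gradient terms just derived together with Lemma \ref{lem02-2}. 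That gives (3).

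The main obstacle is Step 2/5: obtaining a.e. and strong convergence of gradients for the quasilinear term $\int|u|^2|\nabla u|^2$ in the constrained setting. I would handle it via the $W^{1,\theta}$ coercivity provided by the $\mu$-perturbation and the monotonicity argument of \cite{JLW2015}.
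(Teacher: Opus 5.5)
Steps 1--3, and the inequality $I_\mu(u_\mu)\le m_\mu(a)$ in Step 4, are fine. Step 5, however, has a genuine gap.

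Test $I'_\mu(u_n)+\lambda_nu_n\to0$ and the limit equation with $u_n-u_\mu$, and use the radial compact embeddings. This gives
\[
G_n+\lambda_\mu\|u_n-u_\mu\|_2^2\to0,\qquad G_n\ge o_n(1),
\]
where $G_n$ collects the contributions of the $\theta$-Laplacian, the Laplacian and the quasilinear term, which are monotone up to compact terms. This forces $G_n\to0$ only when $\lambda_\mu\ge0$.

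The lemma assumes only $(h_1)$--$(h_5)$. So Lemma~\ref{lem3.7}, which needs $(h_6)$, is unavailable, and $\lambda_\mu<0$ is not excluded. In that case, since $\|u_n-u_\mu\|_2^2\to a^2-b^2$, the identity merely says $G_n\to|\lambda_\mu|(a^2-b^2)$, which is perfectly compatible with $b<a$. You have therefore shown neither ``$\lambda_\mu\neq0\Rightarrow b=a$'' for negative $\lambda_\mu$, nor (3) in the subcase $b<a$, $\lambda_\mu<0$.

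Step 4 cannot close this either. In $I_\mu-\frac15P_\mu$ the coefficient of $\int_{\mathbb{R}^3}|u|^2|\nabla u|^2$ is $1-\frac15\cdot5=0$, so that combination carries no information about the quasilinear term.

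The missing idea, which is how the paper proceeds, is to use $P_\mu$ itself, whose gradient coefficients are all positive. Once (1) yields $P_\mu(u_\mu)=0$, Lemma~\ref{lem3.4}(3) and Lemma~\ref{lem02-2} give
\[
\mu(1+\gamma_\theta)\int_{\mathbb{R}^3}|\nabla u_n|^\theta+\int_{\mathbb{R}^3}|\nabla u_n|^2+5\int_{\mathbb{R}^3}|u_n|^2|\nabla u_n|^2=P_\mu(u_n)+\frac32\int_{\mathbb{R}^3}\tilde H(u_n)\to\frac32\int_{\mathbb{R}^3}\tilde H(u_\mu).
\]
The limit is the same combination evaluated at $u_\mu$. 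Since each of the three terms is weakly lower semicontinuous, each converges separately.

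This gives $I_\mu(u_\mu)=\lim_{n\to\infty}I_\mu(u_n)=m_\mu(a)$ directly, with no case split and no appeal to Lemma~\ref{lem3.3}. It also gives $I'_\mu(u_n)[u_n]\to I'_\mu(u_\mu)[u_\mu]$, hence $\lambda_\mu a^2=\lambda_\mu\|u_\mu\|_2^2$ for every sign of $\lambda_\mu$. (Under $(h_6)$, as in the application to Theorem~\ref{Thm1.1}, one has $\lambda_\mu>0$, and your monotonicity route would suffice.)

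A smaller point in Step 1: ``$s(u_n)\to0$ because $s(\cdot)$ is continuous'' is not justified for a sequence that does not converge in $\Upsilon$. An estimate ``for bounded $s$'' is also not enough, since in Case 1 the relevant shifts $s-s_n$ tend to $-\infty$. What does hold, by the strict decrease of $t\mapsto e^{-5t}\frac{d}{dt}I_\mu(t*v)$, is $I_\mu(t*v_n)\le I_\mu(u_n)+\frac15|P_\mu(u_n)|$ for all $t\le s_n$. That is exactly what the rescaling argument needs.
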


\begin{proof}
(1) Let $u_n$ be a sequence in Lemma \ref{lem3.4}. Taking into account Lemma \ref{lem2.5}(3) and Lemma \ref{lem3.4}(1), we can conclude that $u_n$ is bounded in $\Upsilon_r$. Thus, up to a subsequence, there exists a $u_\mu \in \Upsilon_r$ such that
\begin{align*}
 &u_n \rightharpoonup u_\mu     \;\; \;\;\;\;\;\;\;\;\;\;\;\;\,  \mbox{in} \; \Upsilon_r , \\
&u_n \nabla u_n \rightharpoonup u \nabla u \;\;\;\;\,\mbox{in}   \;\; (L^2(\mathbb{R}^3))^3, \\
 &u_n \rightarrow  u_\mu        \;\;\;\;\;\;\;\;\;\;\;\;\;\;\;   \mbox{in} \;L^q(\mathbb{R}^3)\; \mbox{for}\; \mbox{all}\; q \in (2,6), \\
 &u_n \rightarrow  u_\mu\geq0   \;\;\;\;\;\;\;\;  \,             \mbox{a.e.} \; \mbox{in} \; \mathbb{R}^3.
\end{align*}
Moreover, by Lemma \ref{lem02-2} we obtain
\begin{equation}\label{eq3-7}
\int_{\mathbb{R}^3} h(u_n)u_n \rightarrow \int_{\mathbb{R}^3} h(u_\mu)u_\mu,
\end{equation}
\begin{equation}\label{eq3-8}
\int_{\mathbb{R}^3} H(u_n) \rightarrow \int_{\mathbb{R}^3} H(u_\mu) ,
\end{equation}
\begin{equation}\label{eq3-9}
\int_{\mathbb{R}^3} \tilde{H}(u_n) \rightarrow \int_{\mathbb{R}^3} \tilde{H}(u_\mu),
\end{equation}
 as $n \rightarrow \infty.$
Assume $u_\mu=0$, then
\begin{align*}
\mu(1+\gamma_\theta) \int_{\mathbb{R}^3} |\nabla u_n|^{\theta} +  \int_{\mathbb{R}^3} |\nabla u_n|^2 + 5\int_{\mathbb{R}^3} |u_n|^2 |\nabla u_n|^2
=P_\mu(u_n) +\frac{3}{2} \int_{\mathbb{R}^3} \tilde{H}(u_n) \rightarrow 0,
\end{align*}
which implies that $I_\mu(u_n) \rightarrow 0$, in contradiction with $m_\mu(a) >0$. Thus, $u_\mu \neq0.$
By \cite[Lemma 3]{BL1983} and Lemma \ref{lem3.4}(2), we conclude that there exists a sequence $\lambda_n \in \mathbb{R}$ such that
\begin{equation}\label{eq3.7}
I'_\mu(u_n) +\lambda_n u_n \rightarrow 0 \;\; \mbox{in} \; \Upsilon^*.
\end{equation}
Hence $\lambda_n=-\frac{1}{a^2}I'_\mu(u_n)[u_n]+o_n(1)$ is bounded in $\mathbb{R}$. We assume, up to a subsequence, $\lambda_n \rightarrow \lambda_\mu$. Since $u_n$ is bounded, we conclude
\begin{equation}\label{eq3.8}
I'_\mu(u_n) +\lambda_\mu u_n \rightarrow 0 \;\; \mbox{in} \; \Upsilon^*.
\end{equation}
As a result,
\begin{equation}\label{eq3.9}
I'_\mu(u_\mu) +\lambda_\mu u_\mu = 0 \;\; \mbox{in} \; \Upsilon^*.
\end{equation}

(2) By \eqref{eq3.9}, it is easy to see that $P_\mu(u_\mu)=0$.

(3) It follows from (2) that
$$ P_\mu(u_n) +\frac{3}{2} \int_{\mathbb{R}^3} \tilde{H}(u_n) \rightarrow P_\mu(u_\mu) +\frac{3}{2} \int_{\mathbb{R}^3} \tilde{H}(u_\mu) .$$
Taking into account the weak lower semicontinuous property \cite[Lemma 4.3]{CJS2010}, we have that
\begin{equation}\label{eq3.10}
\mu \int_{\mathbb{R}^3} |\nabla u_n|^{\theta} \rightarrow \mu \int_{\mathbb{R}^3} |\nabla u_\mu|^{\theta},
\end{equation}
\begin{equation}\label{eq3.11}
 \int_{\mathbb{R}^3} |\nabla u_n|^2 \rightarrow \int_{\mathbb{R}^3} |\nabla u_\mu|^2,
\end{equation}
\begin{equation}\label{eq3.12}
 \int_{\mathbb{R}^3} |u_n|^2 |\nabla u_n|^2 \rightarrow \int_{\mathbb{R}^3} |u_\mu|^2 |\nabla u_\mu|^2.
\end{equation}
Therefore,
$$ I_\mu(u_\mu)=\lim_{n\rightarrow \infty} I_\mu(u_n)=m_\mu(a).$$

(4) By \eqref{eq3-7} and \eqref{eq3.10}$-$\eqref{eq3.12}, we get
\begin{equation}\label{eq3.13}
I'_\mu(u_n)[u_n] \rightarrow I'_\mu(u_\mu)[u_\mu].
\end{equation}
Thus combining \eqref{eq3.8} with \eqref{eq3.9} and \eqref{eq3.13}, we conclude $\lambda_\mu \|u_n\|^2_2 \rightarrow \lambda_\mu \|u_\mu\|^2_2.$
Therefore, if $\lambda_\mu \neq0$, then
\begin{equation}\label{eq3.14}
u_n \rightarrow u_\mu,\ \|u_\mu\|_2=a.
\end{equation}

\end{proof}

\begin{lemma}\label{lem3.7}
Assume $h$ satisfies $(h_6)$, $\mu \in [0,1]$, $u\neq0$ and there exists a $\lambda \in \mathbb{R}$ such that $I'_\mu(u)+\lambda u=0$ in $\Upsilon^*.$ Then $\lambda >0$.
\end{lemma}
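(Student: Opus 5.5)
The plan is to combine two identities satisfied by any such critical point: the Nehari-type identity, obtained by testing the equation with $u$, and the Pohozaev identity $P_\mu(u)=0$. We then eliminate the gradient terms so that $\lambda\|u\|_2^2$ is expressed through quantities of known sign.

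Step 1 (Nehari identity). Testing $I'_\mu(u)+\lambda u=0$ with $\varphi=u\in\Upsilon$ gives
\begin{equation*}
\mu\int_{\mathbb{R}^3}|\nabla u|^\theta+\int_{\mathbb{R}^3}|\nabla u|^2+4\int_{\mathbb{R}^3}|u|^2|\nabla u|^2+\lambda\int_{\mathbb{R}^3}|u|^2=\int_{\mathbb{R}^3}h(u)u .
\end{equation*}
The factor $4$ comes from differentiating $\int|u|^2|\nabla u|^2$ in direction $u$.

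Step 2 (Pohozaev identity). For the same $u$ we have $P_\mu(u)=0$, as the paper asserts for critical points (and uses in Lemma \ref{lem3.5}(2)), with $\mu=0$ giving $P(u)=0$. Written out,
\begin{equation*}
\mu(1+\gamma_\theta)\int_{\mathbb{R}^3}|\nabla u|^\theta+\int_{\mathbb{R}^3}|\nabla u|^2+5\int_{\mathbb{R}^3}|u|^2|\nabla u|^2=\frac32\int_{\mathbb{R}^3}\big(h(u)u-2H(u)\big).
\end{equation*}
If one wants to be careful, this is justified by a cut-off argument with $\varphi=x\cdot\nabla u$ together with regularity of $u$; I would simply invoke it as the paper does.

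Step 3 (elimination). Multiply the Pohozaev identity by $\frac45$ and subtract it from the Nehari identity. This cancels the quasilinear term and leaves
\begin{equation*}
\lambda\|u\|_2^2=\int_{\mathbb{R}^3}h(u)u-\frac65\int_{\mathbb{R}^3}\big(h(u)u-2H(u)\big)-\frac15\int_{\mathbb{R}^3}|\nabla u|^2-\mu\Big(1-\frac45(1+\gamma_\theta)\Big)\int_{\mathbb{R}^3}|\nabla u|^\theta .
\end{equation*}
The nonlinear part equals $\frac15\int(12H(u)-h(u)u)$, which is positive but comes with negative gradient terms, so it does not have a definite sign. Hence this particular combination is the wrong one.

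The better choice is to eliminate the $\int|\nabla u|^2$ term instead: subtract the Pohozaev identity from the Nehari identity. This gives
\begin{equation*}
\lambda\|u\|_2^2=\int_{\mathbb{R}^3}h(u)u-\frac32\int_{\mathbb{R}^3}\big(h(u)u-2H(u)\big)+\int_{\mathbb{R}^3}|u|^2|\nabla u|^2+\mu\gamma_\theta\int_{\mathbb{R}^3}|\nabla u|^\theta .
\end{equation*}
Simplifying the nonlinear terms,
\begin{equation*}
\lambda\|u\|_2^2=\frac12\int_{\mathbb{R}^3}\big(6H(u)-h(u)u\big)+\int_{\mathbb{R}^3}|u|^2|\nabla u|^2+\mu\gamma_\theta\int_{\mathbb{R}^3}|\nabla u|^\theta .
\end{equation*}
Here $\gamma_\theta>0$ since $\theta>2$, and $(h_6)$ makes the integrand $6H(u)-h(u)u$ strictly positive wherever $u\neq0$. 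Because $u\neq0$ on a set of positive measure, the right-hand side is strictly positive, so $\lambda>0$.

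The main obstacle is the rigorous justification of the Pohozaev identity for a weak solution in $\Upsilon$, given the quasilinear and $\theta$-Laplacian terms. As the paper treats this as known (it is used in Lemma \ref{lem3.5}(2)), I would cite it, noting if needed that the regularity $u\in L^\infty$ makes the truncation argument work.
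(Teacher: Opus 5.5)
Your proof is correct and is essentially the paper's argument. The paper also combines the Nehari identity (testing the equation with $u$) with $P_\mu(u)=0$. It writes the comparison as the inequality $\mu\int|\nabla u|^\theta+\int|\nabla u|^2+4\int|u|^2|\nabla u|^2<\mu(1+\gamma_\theta)\int|\nabla u|^\theta+\int|\nabla u|^2+5\int|u|^2|\nabla u|^2$ and then uses $(h_6)$ to get $\lambda\|u\|_2^2>\frac{1}{2}\int(6H(u)-h(u)u)>0$, which is your exact identity read as an inequality. The discarded $\frac{4}{5}$-combination in your Step 3 can simply be deleted from a final write-up.
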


\begin{proof}
It is easy to see that
\begin{align*}
\int_{\mathbb{R}^3}h(u)u-\lambda \int_{\mathbb{R}^3} |u|^2 &=\mu \int_{\mathbb{R}^3} |\nabla u|^{\theta} + \int_{\mathbb{R}^3} |\nabla u|^2 + 4\int_{\mathbb{R}^3} |u|^2 |\nabla u|^2  \\
&< \mu(1+\gamma_\theta) \int_{\mathbb{R}^3} |\nabla u|^{\theta} + \int_{\mathbb{R}^3} |\nabla u|^2 + 5\int_{\mathbb{R}^3} |u|^2 |\nabla u|^2 \\
&= \frac{3}{2} \int_{\mathbb{R}^3} \left[h(u)u-2H(u)\right].
\end{align*}
By $(h_6)$, we obtain
$$ \lambda \int_{\mathbb{R}^3} |u|^2 > \frac{1}{2} \int_{\mathbb{R}^3} [6H(u)-h(u)u] >0.$$
As a result, $\lambda >0$.

\end{proof}

\section{Proof of Theorem \ref{Thm1.1}}\label{sec4}

In this section, we first establish a technical lemma, which means that the sequence of the critical points of $I_\mu|_{\mathcal{S}(a)}$ converges to the critical point of $I|_{\mathcal{S}(a)}$ as $\mu \rightarrow 0^+$.

\begin{lemma}\label{lem4.1}
Let $h$ satisfy $(h_1)-(h_4)$ and $(h_5')$, $\mu_n \rightarrow 0^+$, there exist $\lambda_{\mu_n} \geq 0$ and $u_{\mu_n} \in \mathcal{S}_r(a_n)$ with $0<a_n\leq a$ such that $I'_{\mu_n}(u_{\mu_n})+\lambda_{\mu_n} u_{\mu_n}=0$, $I_{\mu_n}(u_{\mu_n}) \rightarrow c>0$. Then  \\
(1) there exists a subsequence $u_{\mu_n}\rightharpoonup u$ in $H^1(\mathbb{R}^3)$ with $u\in H^1_r(\mathbb{R}^3) \cap L^{\infty}(\mathbb{R}^3)$;  \\
(2) there exists a $\lambda \in \mathbb{R}$ such that $\Theta(u,\lambda)=0$, $I(u)=c$ and $0<\|u\|_2\leq a$;  \\
(3) if $u_{\mu_n}\geq0$ for any $n \in \mathbb{N}^+$, then $u\geq0$;  \\
(4) if $\lambda\neq0$, then $\|u\|_2=\lim_{n\rightarrow \infty}a_n$.
\end{lemma}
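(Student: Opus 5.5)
First I will obtain uniform bounds. Since $I'_{\mu_n}(u_{\mu_n})+\lambda_{\mu_n}u_{\mu_n}=0$, the Pohozaev identity gives $P_{\mu_n}(u_{\mu_n})=0$. The computation in Lemma \ref{lem2.5}(3) uses only $P_\mu=0$, Lemma \ref{lem2.1} and $\|u\|_2\le a$, so it carries over to mass $a_n\le a$. Together with $I_{\mu_n}(u_{\mu_n})\to c$, it bounds $\mu_n\int|\nabla u_{\mu_n}|^\theta$ and $\int|\nabla u_{\mu_n}|^2$. For $\int|u_{\mu_n}|^2|\nabla u_{\mu_n}|^2$ I will repeat the rescaling and concentration argument of Lemma \ref{lem2.5}(3). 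The constants there depend on $a$ only through the upper bound $a_n\le a$, so they stay uniform.

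Next I will bound $\lambda_{\mu_n}$. Testing the equation with $u_{\mu_n}$ gives
\[
\lambda_{\mu_n}a_n^2=\int h(u_{\mu_n})u_{\mu_n}-\mu_n\int|\nabla u_{\mu_n}|^\theta-\int|\nabla u_{\mu_n}|^2-4\int|u_{\mu_n}|^2|\nabla u_{\mu_n}|^2 .
\]
This controls $\lambda_{\mu_n}a_n^2$, but I also need a lower bound on $a_n$. I will get one by showing $u\neq0$, arguing as in Lemma \ref{lem3.5}. Since the sequence is radial, Lemma \ref{lem02-2} gives $\int\tilde H(u_{\mu_n})\to\int\tilde H(u)$. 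If $u=0$, then $P_{\mu_n}=0$ forces all the gradient terms to tend to $0$, hence $I_{\mu_n}\to0$, contradicting $c>0$. So $\liminf a_n\ge\|u\|_2>0$, and after passing to a subsequence $\lambda_{\mu_n}\to\lambda\ge0$. Weak convergence in $H^1_r$ together with a.e. convergence gives (1), except for the $L^\infty$ part, and it gives (3) directly.

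The main obstacle is passing to the limit in the quasilinear term, since $\int|u|^2|\nabla u|^2$ is not differentiable. I will follow \cite{JLW2015}. First I derive a uniform $L^\infty$ bound on $u_{\mu_n}$ by Moser iteration, testing the equation with $u|u|^{2(k-1)}$ truncated. The quasilinear term supplies a term $\int|u|^{2k}|\nabla u|^2$ with a good sign, and $(h_3)$ handles the right-hand side; this gives $u\in L^\infty$. Next I prove $\mu_n\int|\nabla u_{\mu_n}|^{\theta}\to0$ and obtain local $H^1$ convergence. To do this I test with $\varphi\psi e^{-u_{\mu_n}}$-type functions, or use the argument of \cite[Lemma 4.3]{CJS2010}, to show $\nabla u_{\mu_n}\to\nabla u$ a.e. I then pass to the limit in
\[
\int(1+2u_n^2)\nabla u_n\cdot\nabla\varphi+2u_n|\nabla u_n|^2\varphi
\]
for $\varphi\in C_c^\infty$. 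This yields $\Theta(u,\lambda)=0$ weakly.

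Finally, to get $I(u)=c$, I will use the limiting Pohozaev identity $P(u)=0$ together with the convergence of $\int\tilde H$, $\int H$ and $\int h(u)u$ from Lemma \ref{lem02-2}. Comparing $P_{\mu_n}(u_{\mu_n})=0$ with $P(u)=0$ and using weak lower semicontinuity, each of the norms $\int|\nabla u_{\mu_n}|^2$, $\int|u_{\mu_n}|^2|\nabla u_{\mu_n}|^2$ and $\mu_n\int|\nabla u_{\mu_n}|^\theta$ converges to its limit, with the last one tending to $0$. Hence $I_{\mu_n}(u_{\mu_n})\to I(u)=c$, and $\|u\|_2\le\liminf a_n\le a$, which completes (2). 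For (4), I test the equations with the solutions themselves and use the norm convergences just proved; this gives $\lambda a_n^2\to\lambda\|u\|_2^2$, so $\lambda\ne0$ implies $\|u\|_2=\lim a_n$.
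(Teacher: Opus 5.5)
Your overall structure matches the paper: uniform bounds from Lemma~\ref{lem2.5}(3), nontriviality of the weak limit, boundedness of $\lambda_{\mu_n}$, Moser iteration for the $L^\infty$ bound, and at the end the Pohozaev comparison giving norm convergence, $I(u)=c$ and (4). Your ``$u=0$ forces $I_{\mu_n}\to0$'' argument is a valid variant of the paper's ``$a_n\to0$'' contradiction.

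The gap is in the step you yourself flag as the main obstacle, the derivation of $\Theta(u,\lambda)=0$. You name the right test function, $\psi e^{-u_{\mu_n}}$, but give it the wrong role. You plan to use it, or \cite[Lemma 4.3]{CJS2010}, to get $\nabla u_{\mu_n}\to\nabla u$ a.e.\ or in $L^2_{\mathrm{loc}}$, and then pass to the limit in $\int 2u_n|\nabla u_n|^2\varphi$. Neither tool gives convergence of gradients. The paper uses \cite[Lemma 4.3]{CJS2010} only as a weak lower semicontinuity property, and the exponential test functions produce inequalities, not gradient convergence.

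Even a.e.\ convergence of $\nabla u_{\mu_n}$ together with an $L^2$ bound would not let you pass to the limit in the quadratic term $\int u_n|\nabla u_n|^2\varphi$; that needs strong $L^2_{\mathrm{loc}}$ convergence. The usual way to get it, testing with something like $(u_{\mu_n}-u)\eta$, is not admissible here. You only know $\mu_n\|\nabla u_{\mu_n}\|_\theta^\theta\le C$, so the weak $H^1$ limit $u$ need not lie in $W^{1,\theta}$. Then $(u_{\mu_n}-u)\eta\notin\Upsilon$, and the $\theta$-Laplacian term is uncontrolled unless you add an approximation argument, which you do not give. Likewise, $\mu_n\int|\nabla u_{\mu_n}|^\theta\to0$ cannot be proved first: in the paper it is a consequence of $P(u)=0$, that is, of the limit equation.

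The missing idea is that no gradient convergence is needed. Take $\varphi=\psi e^{-u_{\mu_n}}$ with $0\le\psi\in\mathcal{C}_0^\infty(\mathbb{R}^3)$. All terms quadratic in $\nabla u_{\mu_n}$ then collect into
\[
-\mu_n\int_{\mathbb{R}^3}|\nabla u_{\mu_n}|^\theta\psi e^{-u_{\mu_n}}-\int_{\mathbb{R}^3}\left(1+2u_{\mu_n}^2-2u_{\mu_n}\right)\psi e^{-u_{\mu_n}}|\nabla u_{\mu_n}|^2\le0,
\]
since $1+2t^2-2t>0$. Drop the first term and use Fatou's lemma on the second. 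The remaining terms are linear in $\nabla u_{\mu_n}$ and converge by weak convergence, the $L^\infty$ bound and dominated convergence. The term $\mu_n\int|\nabla u_{\mu_n}|^{\theta-2}\nabla u_{\mu_n}\cdot\nabla\psi\, e^{-u_{\mu_n}}$ vanishes, because
\[
\mu_n\|\nabla u_{\mu_n}\|_\theta^{\theta-1}=(\mu_n\|\nabla u_{\mu_n}\|_\theta^\theta)^{(\theta-1)/\theta}\mu_n^{1/\theta}\to0.
\]
This gives a one-sided inequality for $u$. Approximating $\phi e^{u}$ by such $\psi$ yields $\langle\Theta(u,\lambda),\phi\rangle\ge0$ for every $0\le\phi\in\mathcal{C}_0^\infty(\mathbb{R}^3)$. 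The choice $\varphi=\psi e^{u_{\mu_n}}$, where $1+2t^2+2t>0$, gives the reverse inequality. Only after this do $P(u)=0$ and the Pohozaev comparison give the norm convergences you use in your last paragraph.
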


\begin{proof}
(1) It follows from $I'_{\mu_n}(u_{\mu_n})+\lambda_{\mu_n} u_{\mu_n}=0$ that $P_{\mu_n}(u_{\mu_n})=0$ for any $n \in \mathbb{N}^+$. By Lemma \ref{lem2.5}(3), $I_{\mu_n}(u_{\mu_n}) \rightarrow c>0$ implies that $u_{\mu_n}$ is bounded in $H^1(\mathbb{R}^3)$ and $\int_{\mathbb{R}^3} |u_{\mu_n}|^2 |\nabla u_{\mu_n}|^2 \leq C$. Thus, up to a subsequence, there exists a $u \in H^1(\mathbb{R}^3)$ such that
\begin{align*}
 &u_{\mu_n} \rightharpoonup u     \;\; \;\;\;\;\;\;\;\;\;\;\;\;\;\;\;\;\;  \mbox{in} \; H^1_r(\mathbb{R}^3) , \\
 &u_{\mu_n} \nabla u_{\mu_n} \rightharpoonup u \nabla u \;\;\;\;\,\,\mbox{in}   \; (L^2(\mathbb{R}^3))^3, \\
 &u_{\mu_n} \rightarrow  u        \;\;\;\;\;\;\;\;\;\;\;\;\;\;\;\;\;\;\;   \mbox{in} \;L^q(\mathbb{R}^3)\;\, \mbox{for}\; \mbox{all}\; q \in (2,6), \\
 &u_{\mu_n} \rightarrow  u   \;\;\;\;\;\;\;\;\;\;\;\;\;\;\;\;\;\;\;          \mbox{a.e.} \; \mbox{in} \; \mathbb{R}^3.
\end{align*}
By Lemma \ref{lem02-2}, we obtain
\begin{equation}\label{eq4-1}
    \int_{\mathbb{R}^3} h(u_{\mu_n})u_{\mu_n} \rightarrow \int_{\mathbb{R}^3} h(u)u,
\end{equation}
\begin{equation}\label{eq4-2}
\int_{\mathbb{R}^3} H(u_{\mu_n}) \rightarrow \int_{\mathbb{R}^3} H(u) ,
\end{equation}
\begin{equation}\label{eq4-3}
\int_{\mathbb{R}^3} \tilde{H}(u_{\mu_n}) \rightarrow \int_{\mathbb{R}^3} \tilde{H}(u),
\end{equation}
 as $n \rightarrow \infty.$
Assume $a_n \rightarrow 0$, it follows from \eqref{GNI} that $u_{\mu_n} \rightarrow  0  \;\;   \mbox{in} \;L^{\frac{16}{3}}(\mathbb{R}^3)$, then $\int_{\mathbb{R}^3}  H(u_{\mu_n})\rightarrow 0$ and $\int_{\mathbb{R}^3}  \tilde{H}(u_{\mu_n})\rightarrow 0$. Taking into account $P_{\mu_n}(u_{\mu_n})=0$, we can conclude that $I_{\mu_n}(u_{\mu_n}) \rightarrow 0<c$. This is impossible. Hence, $\liminf_{n\rightarrow \infty}a_n>0$, and so $\lambda_{\mu_n}=-\frac{1}{a^2_n}I'_{\mu_n}(u_{\mu_n})[u_{\mu_n}]$ is bounded in $\mathbb{R}$. Thus, up to a subsequence, $\lambda_{\mu_n} \rightarrow \lambda$ in $ \mathbb{R}$. And it is easy to see that
\begin{equation}\label{eq4-4}
I'_{\mu_n}(u_{\mu_n}) +\lambda u_{\mu_n} \rightarrow 0 \;\; \mbox{in} \; \Upsilon^*.
\end{equation}

{\bf Claim:} There exists a constant $C>0$ such that $\|u_{\mu_n}\|_{\infty} \leq C$ and $\|u\|_{\infty} \leq C$.

{\bf Case 1:} If $\|u_{\mu_n}\|_\infty \leq 1$, then Claim is obvious.

{\bf Case 2:} If Case 1 is impossible, let
\begin{align*}
\begin{split}
w_n:=
\left \{
      \begin{array}{ll}
           u_{\mu_n},  &|u_{\mu_n}| > 1, \\
           0,           &|u_{\mu_n}(x)| \leq1. \\
      \end{array}
\right.
\end{split}
\end{align*}
Set $M>1, r>0$ and
\begin{align*}
v_n:=\max\{-M,\min\{w_n, M\}\}.
\end{align*}
Let $\varphi=w_n|v_n|^{2r}$, then $\varphi \in \Upsilon$. By $I'_{\mu_n}(u_{\mu_n})+\lambda_{\mu_n} u_{\mu_n}=0$ and $\lambda_{\mu_n} \geq 0$, we can infer that
\begin{align*}
&\ \int_{\mathbb{R}^3}h(w_n)\varphi   \\
=&\ \mu_n \int_{\mathbb{R}^3} |\nabla w_n|^{\theta-2}\nabla w_n \cdot \nabla \varphi
+\int_{\mathbb{R}^3} \nabla w_n \cdot \nabla \varphi
+2\int_{\mathbb{R}^3}\left(|w_n|^2 \nabla w_n \cdot \nabla \varphi +w_n\varphi|\nabla w_n|^2\right)  \\
&\,+\lambda_{\mu_n} \int_{\mathbb{R}^3}w_n\varphi  \\
\geq &\ 2\int_{\mathbb{R}^3} |w_n|^2\nabla w_n \cdot \nabla \varphi  \\
=&\ 2\int_{\mathbb{R}^3}\left( |w_n|^2|\nabla w_n|^2|v_n|^{2r} +2r|w_n|^2|v_n|^{2r-2}w_nv_n\nabla w_n\cdot \nabla v_n \right)  \\
=&\ \frac{1}{2} \int_{\mathbb{R}^3} \left||v_n|^r \nabla (|w_n|^2)\right|^2 +\frac{4}{r}\int_{\mathbb{R}^3}\left| |w_n|^2\nabla (|v_n|^r)\right|^2   \\
\geq&\ \frac{1}{r+4} \int_{\mathbb{R}^3} \left| \nabla (|w_n|^2|v_n|^r)\right|^2   \\
\geq&\ \frac{C}{(r+2)^2} \left[\int_{\mathbb{R}^3} \left(|w_n|^2|v_n|^r\right)^{6}\right]^{\frac{1}{3}}.
\end{align*}
It is easy to see that $|h(w_n)| \leq C\left(|w_n|^{\frac{13}{3}} + |w_n|^{5}\right) \leq C|w_n|^{5}$. Taking into account the interpolation inequality, we get
\begin{align*}
\int_{\mathbb{R}^3}h(w_n)\varphi   &\leq C\int_{\mathbb{R}^3} |w_n|^{5}\varphi   =C\int_{\mathbb{R}^3} |w_n|^{6}|v_n|^{2r}  \\
&\leq C\left(\int_{\mathbb{R}^3} |w_n|^{12}\right)^{\frac{1}{6}} \left[\int_{\mathbb{R}^3} \left(|v_n|^r|w_n|^2\right)^{\frac{12}{5}}\right]^{\frac{5}{6}}  \\
&\leq C \left[\int_{\mathbb{R}^3} \left(|v_n|^r|w_n|^2\right)^{\frac{12}{5}}\right]^{\frac{5}{6}}
\end{align*}
As a result,
\begin{equation}\label{eq4.1}
\left[\int_{\mathbb{R}^3} \left(|v_n|^r|w_n|^2\right)^{6}\right]^{\frac{1}{6}} \leq C(r+2) \left[\int_{\mathbb{R}^3} \left(|v_n|^r|w_n|^2\right)^{\frac{12}{5}}\right]^{\frac{5}{12}}.
\end{equation}
Let $r_0=3$, $d=\frac{5}{2}$. Taking $r=r_0$ in \eqref{eq4.1}, and letting $M \rightarrow +\infty$, we can derive
\begin{equation}\label{eq4.2}
\|w_n\|_{12d} \leq \ [C(r_0+2)]^\frac{1}{r_0+2}\|w_n\|_{12}.
\end{equation}
Set $r_{i+1}+2=(r_i+2)d$ for $i\in\mathbb{N}^+$. Then, by induction, we infer that
\begin{equation}\label{eq4.3}
\|w_n\|_{12d^{i+1}} \leq \ \prod^i_{k=0} [C(r_k+2)]^\frac{1}{r_k+2}\|w_n\|_{12}   \leq C\|w_n\|_{12}.
\end{equation}
Taking $i \rightarrow +\infty$ in \eqref{eq4.3}, we obtain
$$\|w_n\|_{\infty} \leq C.$$
Therefore,
\begin{equation}\label{eq4.4}
\|u_{\mu_n}\|_{\infty} \leq C,\ \|u\|_{\infty} \leq C.
\end{equation}

(2) Let $\varphi=\psi e^{-u_{\mu_n}}$ with $0\leq \psi \in \mathcal{C}^{\infty}_0(\mathbb{R}^3)$, we can derive
\begin{align*}
0=&\ (I'_{\mu_n}(u_{\mu_n})+\lambda_{\mu_n} u_{\mu_n})[\varphi] \\
 =&\ \mu_n \int_{\mathbb{R}^3} |\nabla u_{\mu_n}|^{\theta-2}\nabla u_{\mu_n} \cdot (\nabla \psi e^{-u_{\mu_n}}-\psi e^{-u_{\mu_n}}\nabla u_{\mu_n})       +\int_{\mathbb{R}^3} \nabla u_{\mu_n} \cdot (\nabla \psi e^{-u_{\mu_n}}-\psi e^{-u_{\mu_n}}\nabla u_{\mu_n})  \\
 &+2\int_{\mathbb{R}^3} |u_{\mu_n}|^2 \nabla u_{\mu_n} \cdot (\nabla \psi e^{-u_{\mu_n}}-\psi e^{-u_{\mu_n}}\nabla u_{\mu_n})
 +2\int_{\mathbb{R}^3} u_{\mu_n}\psi e^{-u_{\mu_n}} |\nabla u_{\mu_n}|^2
  \\
  &\,+\lambda_{\mu_n} \int_{\mathbb{R}^3} u_{\mu_n}\psi e^{-u_{\mu_n}} -\int_{\mathbb{R}^3} h(u_{\mu_n})\psi e^{-u_{\mu_n}}  \\
 \leq&\ \mu_n \int_{\mathbb{R}^3} |\nabla u_{\mu_n}|^{\theta-2}\nabla u_{\mu_n} \cdot \nabla \psi e^{-u_{\mu_n}}
 + \int_{\mathbb{R}^3}\left(1+2u_{\mu_n}^2\right)\nabla u_{\mu_n} \cdot \nabla \psi e^{-u_{\mu_n}}   \\
 &-\int_{\mathbb{R}^3}\left(1+2u_{\mu_n}^2-2u_{\mu_n}\right) \psi e^{-u_{\mu_n}}|\nabla u_{\mu_n}|^2   +\lambda_{\mu_n} \int_{\mathbb{R}^3} u_{\mu_n}\psi e^{-u_{\mu_n}}  \\
 &-\int_{\mathbb{R}^3} h(u_{\mu_n})\psi e^{-u_{\mu_n}}.
\end{align*}
By Lemma \ref{lem2.5}(3) and $\mu_n \rightarrow 0^+$, we infer that
\begin{align*}
     \left|\mu_n \int_{\mathbb{R}^3} |\nabla u_{\mu_n}|^{\theta-2}\nabla u_{\mu_n} \cdot \nabla \psi e^{-u_{\mu_n}} \right|
\leq C|\mu_n \|\nabla u_{\mu_n}\|^{\theta-1}_\theta|
=    C|\mu_n \|\nabla u_{\mu_n}\|^{\theta}_\theta|^{\frac{\theta-1}{\theta}} |\mu_n|^{\frac{1}{\theta}}  \rightarrow 0^+.
\end{align*}
Taking into account (1), the H\"older inequality, the weak convergence, and the Lebesgue's dominated convergence theorem, we can deduce that
$$\int_{\mathbb{R}^3}\left(1+2u_{\mu_n}^2\right)\nabla u_{\mu_n} \cdot \nabla \psi e^{-u_{\mu_n}} \rightarrow  \int_{\mathbb{R}^3}(1+2u^2)\nabla u \cdot \nabla \psi e^{-u} ,$$
$$\lambda_{\mu_n} \int_{\mathbb{R}^3} u_{\mu_n}\psi e^{-u_{\mu_n}}  \rightarrow \lambda \int_{\mathbb{R}^3} u\psi e^{-u} ,$$
$$\int_{\mathbb{R}^3} h(u_{\mu_n})\psi e^{-u_{\mu_n}} \rightarrow \int_{\mathbb{R}^3} h(u)\psi e^{-u}.$$
By the Fatou's lemma, we conclude that
$$\liminf_{n\rightarrow \infty} \int_{\mathbb{R}^3}(1+2u_{\mu_n}^2-2u_{\mu_n}) \psi e^{-u_{\mu_n}}|\nabla u_{\mu_n}|^2 \geq \int_{\mathbb{R}^3}(1+2u^2-2u) \psi e^{-u}|\nabla u|^2. $$
As a result,
\begin{align*}
0\leq & \int_{\mathbb{R}^3} \nabla u \cdot (\nabla \psi e^{-u}-\psi e^{-u}\nabla u)  +2\int_{\mathbb{R}^3} |u|^2 \nabla u \cdot (\nabla \psi e^{-u}-\psi e^{-u}\nabla u)  \\
 & +2\int_{\mathbb{R}^3} u\psi e^{-u} |\nabla u|^2 +\lambda \int_{\mathbb{R}^3} u\psi e^{-u}
 -\int_{\mathbb{R}^3} h(u)\psi e^{-u} .
\end{align*}
For any fixed $\phi \in \mathcal{C}^{\infty}_0(\mathbb{R}^3)$ with $\phi \geq0$, we take $\psi_n \in \mathcal{C}^{\infty}_0(\mathbb{R}^3)$ and $\psi_n \geq0$ such that
\begin{align*}
\psi_n \rightarrow \phi e^u \;\mbox{in} \; H^1(\mathbb{R}^3),\;\;  \psi_n \rightarrow \phi e^u \; \mbox{a.e.} \;\mbox{in} \; \mathbb{R}^3.
\end{align*}
Therefore,
\begin{align*}
0\leq & \int_{\mathbb{R}^3} \nabla u \cdot \nabla \phi   +2\int_{\mathbb{R}^3} \left(|u|^2 \nabla u \cdot \nabla \phi
  +u\phi |\nabla u|^2\right) +\lambda \int_{\mathbb{R}^3} u\phi
 -\int_{\mathbb{R}^3} h(u)\phi .
\end{align*}
Similarly by taking $\varphi=\psi e^{u_{\mu_n}}$, we get
\begin{align*}
0\geq & \int_{\mathbb{R}^3} \nabla u \cdot \nabla \phi   +2\int_{\mathbb{R}^3} \left(|u|^2 \nabla u \cdot \nabla \phi
  +u\phi |\nabla u|^2\right) +\lambda \int_{\mathbb{R}^3} u\phi
 -\int_{\mathbb{R}^3} h(u)\phi .
\end{align*}
It is easy to get
\begin{equation}\label{eq4-9}
    \Theta(u,\lambda)=0,
\end{equation}
which implies that $P(u)=0$.
Obviously,
$$P_{\mu_n}(u_{\mu_n}) +\frac{3}{2}\int_{\mathbb{R}^3}\tilde{H}(u_{\mu_n}) \rightarrow P(u) +\frac{3}{2}\int_{\mathbb{R}^3}\tilde{H}(u).$$
Similar to \eqref{eq3.10}$-$\eqref{eq3.12}, we have
\begin{equation}\label{eq4.5}
\mu_n \int_{\mathbb{R}^3} |\nabla u_{\mu_n}|^{\theta} \rightarrow 0,
\end{equation}
\begin{equation}\label{eq4.6}
 \int_{\mathbb{R}^3} |\nabla u_{\mu_n}|^2 \rightarrow \int_{\mathbb{R}^3} |\nabla u|^2,
\end{equation}
\begin{equation}\label{eq4.7}
 \int_{\mathbb{R}^3} |u_{\mu_n}|^2 |\nabla u_{\mu_n}|^2 \rightarrow \int_{\mathbb{R}^3} |u|^2 |\nabla u|^2.
\end{equation}
Then
$$I(u)=\lim_{n\rightarrow \infty} I_{\mu_n}(u_{\mu_n})=c.$$
Consequently,
$$0<\|u\|_2 \leq \liminf_{n\rightarrow \infty} \|u_{\mu_n}\|_2 \leq a.$$

(3) If $u_{\mu_n} \geq0$ for any $n \in \mathbb{N}^+$, then for any $x \in \mathbb{R}^3$, we have
$$u(x)=\lim_{n\rightarrow \infty} u_{\mu_n}(x) \geq0.$$

(4) It follows from \eqref{eq4-1} and \eqref{eq4.5}$-$\eqref{eq4.7} that
\begin{equation}\label{eq4.13}
    I'_{\mu_n}(u_{\mu_n})[u_{\mu_n}] \rightarrow \int_{\mathbb{R}^3} |\nabla u|^2  +4\int_{\mathbb{R}^3} |u|^2 |\nabla u|^2
 -\int_{\mathbb{R}^3} h(u)u.
\end{equation}
Thus combining \eqref{eq4-4} with \eqref{eq4-9} and \eqref{eq4.13}, we obtain
$\lambda \|u_{\mu_n}\|^2_2 \rightarrow \lambda \|u\|^2_2$.
If $\lambda \neq0$, then
$$\|u\|_2=\lim_{n\rightarrow \infty}a_n.$$
\end{proof}

\begin{remark}
We work in a radial space which yields the compact embedding. As for non-radial space, the authors \cite{JL2020} provided a method to handle the compactness by analyzing the nonincreasing property of $m \mapsto E_m$.
And we can establish a similar property for $a \mapsto m_\mu(a)$ to conclude Lemma \ref{lem3.5}. However, it is impossible to derive Lemma \ref{lem4.1} from this property.
\end{remark}

Based on the above preliminary works, we are able to finish the proof of Theorem \ref{Thm1.1}.

\begin{proof} [Proof of Theorem \ref{Thm1.1}]
For any $0<\mu_1<\mu_2 \leq1$, since $I_{\mu_1}(u) \leq I_{\mu_2}(u)$ and $\Gamma_{\mu_1} \supset \Gamma_{\mu_2}$, there holds
\begin{align*}
m_{\mu_1}(a) &=     \inf_{\gamma \in \Gamma_{\mu_1}} \sup_{t \in [0,1]} \Phi_{\mu_1}(\gamma(t)) \leq  \inf_{\gamma \in \Gamma_{\mu_2}} \sup_{t \in [0,1]} \Phi_{\mu_1}(\gamma(t))  \\
             &\leq  \inf_{\gamma \in \Gamma_{\mu_2}} \sup_{t \in [0,1]} \Phi_{\mu_2}(\gamma(t))  =     m_{\mu_2}(a) ,
\end{align*}
i.e., $m_\mu(a)$ is nondecreasing with respect to $\mu \in (0,1]$. From Remark \ref{rem3.5}, we infer that
$$b(a):=\lim_{\mu \rightarrow 0^+} m_\mu(a) \geq \frac{\zeta}{8}>0.$$
It follows from Lemma \ref{lem3.5} that there exists a sequence $\{\mu_n\}$ such that
$$ I'_{\mu_n}(u_{\mu_n})+\lambda_{\mu_n}u_{\mu_n}=0, \;\; I_{\mu_n}(u_{\mu_n})=m_{\mu_n}(a) \rightarrow b(a), \;\mbox{as} \; \mu_n \rightarrow 0^+,$$
where $u_{\mu_n} \in \mathcal{S}_r(a_n)$ with $0<a_n\leq a$ and $u_{\mu_n} \geq0$. Taking into account Lemma \ref{lem3.7}, we have $\lambda_{\mu_n}>0$. By Lemma \ref{lem3.5}, we conclude that $a_n=\|u_{\mu_n}\|_2=a$.    At the same time, Lemma \ref{lem4.1} implies that there exists a $v\geq0,\; v \in H^1_r(\mathbb{R}^3)\cap L^{\infty}(\mathbb{R}^3)$ and a $\lambda_0 \in \mathbb{R}$ such that
$$\Theta(v,\lambda_0)=0, \;\; I(v)=b(a),\;\; 0<\|v\|_2\leq a.$$
So, by Lemma \ref{lem3.7}, we get $\lambda_0>0$, which means $\|v\|_2=\lim_{n\rightarrow \infty}a_n =a$. Obviously, $v$ is a nontrivial nonnegative solution of \eqref{eq01} and \eqref{eq02}.

Recall
$$\sigma(a)=\inf_{u \in \mathcal{S}'(a), \Theta(u,\lambda)=0} I(u).$$
Obviously, $\sigma(a) \leq I(v)=b(a)$. Further, using the similar approach to Lemma \ref{lem2.5}(1), we conclude that $\sigma(a)>0$.

We take a sequence $v_n \neq0$ and $v_n \geq0$ such that
$$ v_n \in \mathcal{S}'(a),\;  \Theta(v_n,\lambda_n)=0,\;  I(v_n) \rightarrow \sigma(a).$$
By the similar approach to Lemma \ref{lem4.1}, we conclude that
there exists a $u \neq0,\; u\geq0,\; u \in H^1_r(\mathbb{R}^3)\cap L^{\infty}(\mathbb{R}^3)$ and a $\lambda \in \mathbb{R}$ such that
$$\Theta(u,\lambda)=0, \;\; I(u)=\sigma(a).$$
It follows from Lemma \ref{lem3.7} that $\lambda >0$. As a result, $\|u\|_2=a$. By \cite[Lemma 2.6]{LLW2013-2}  and $u \in L^{\infty}(\mathbb{R}^3)$, we get $u>0$.
\end{proof}

\section{Proof of Theorem \ref{Thm1.2}}\label{sec5}

Let $Y \subset \Upsilon$. A set $A \subset Y$ is called $\tau$-invariant if $\tau(A)=A$. A homotopy $\eta:[0,1]\times Y \mapsto Y$ is $\tau$-equivariant if $\eta(t,\tau(u))=\tau(\eta(t,u))$ for any $(t,u) \in [0,1]\times Y$.

\begin{definition}\label{def5.1}
\cite[Definition 7.1]{G1993} Let B be a closed $\tau$-invariant subset of Y. We say that a class $\mathcal{F}$ of compact subsets of Y is a $\tau$-homotopy stable family with boundary B provided  \\
(a) every set in $\mathcal{F}$ is $\tau$-invariant;  \\
(b) every set in $\mathcal{F}$ contains B;  \\
(c) for any set A in $\mathcal{F}$ and any $\tau$-equivariant homoyopy $\eta \in \mathcal{C}([0,1]\times Y,Y)$ satisfying $\eta(t,x)=x$ for all $(t,x)$ in $(\{0\}\times Y)\cup ([0,1]\times B) $ we have that $\eta(1,A) \in \mathcal{F}.$
\end{definition}

We consider an auxiliary functional as the one in \cite{JL2020}:
\begin{align*}
&\ \Psi_\mu(u):=I_\mu(s(u)*u)   \\
=&\ \frac{\mu}{\theta} e^{\theta(1+\gamma_\theta)s(u)}  \int_{\mathbb{R}^3} |\nabla u|^{\theta} + \frac{1}{2}e^{2s(u)} \int_{\mathbb{R}^3} |\nabla u|^2 + e^{5s(u)}\int_{\mathbb{R}^3} |u|^2 |\nabla u|^2 -e^{-3s(u)}\int_{\mathbb{R}^3}  H(e^{\frac{3}{2}s(u)}u) ,
\end{align*}
where $s(u)$ is given by Lemma \ref{lem2.4}. Taking $\tau(u)=-u$, then $\Psi_\mu(u)$ is $\tau$-invariant.

\begin{lemma}\label{lem5.2}
Let $h$ satisfy $(h_1)-(h_4)$ and $(h_5')$. Then for $\mu \in (0,1]$, $u \in \Upsilon \backslash \{0\}$ and $\varphi \in \Upsilon$, we have
\begin{align*}
\Psi'_\mu(u)[\varphi]=&\ \mu e^{\theta(1+\gamma_\theta)s(u)} \int_{\mathbb{R}^3} |\nabla u|^{\theta-2}\nabla u \cdot \nabla \varphi       +e^{2s(u)}\int_{\mathbb{R}^3} \nabla u \cdot \nabla \varphi  \\
 &+2e^{5s(u)}\int_{\mathbb{R}^3} |u|^2 \nabla u \cdot \nabla\varphi + u\varphi |\nabla u|^2 -e^{-\frac{3}{2}s(u)}\int_{\mathbb{R}^3} h\left(e^{\frac{3}{2}s(u)}u\right)\varphi  \\
 =&\ I'_\mu(s(u)*u)[s(u)*\varphi].
\end{align*}
\end{lemma}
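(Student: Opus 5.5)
The strategy is the standard one for fibering-type functionals, as in \cite{JL2020}: treat $\Psi_\mu(u)=\Phi_\mu(s(u),u)$ as the composition of the $\mathcal{C}^1$ map $\Phi_\mu(s,u)=I_\mu(s*u)$ with $u\mapsto(s(u),u)$. Then use $\partial_s\Phi_\mu(s(u),u)=P_\mu(s(u)*u)=0$ to kill the term involving the derivative of $s(u)$. Since $s(\cdot)$ is only known to be continuous (Lemma \ref{lem2.4}(2)), not differentiable, the chain rule cannot be applied directly. I will instead compute the Gateaux derivative by hand, using a two-sided squeeze built from the maximality of $s(u)$.

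Fix $u\in\Upsilon\backslash\{0\}$, $\varphi\in\Upsilon$, and $|t|$ small, so that $u+t\varphi\neq0$. Write $s_t:=s(u+t\varphi)$ and $s_0=s(u)$. By Lemma \ref{lem2.4}(1), $s_0$ maximizes $s\mapsto I_\mu(s*u)$ and $s_t$ maximizes $s\mapsto I_\mu(s*(u+t\varphi))$. This gives the two inequalities
\begin{align*}
\Psi_\mu(u+t\varphi)-\Psi_\mu(u)&\le I_\mu(s_t*(u+t\varphi))-I_\mu(s_t*u),\\
\Psi_\mu(u+t\varphi)-\Psi_\mu(u)&\ge I_\mu(s_0*(u+t\varphi))-I_\mu(s_0*u).
\end{align*}
The lower bound, divided by $t$, tends to $I_\mu'(s_0*u)[s_0*\varphi]$ as $t\to0$. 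This holds because $v\mapsto s_0*v$ is a bounded linear map on $\Upsilon$ and $I_\mu\in\mathcal{C}^1(\Upsilon)$. For the upper bound I apply the mean value theorem in $t$: there is $\tau_t\in(0,1)$ with
\[
\frac{I_\mu(s_t*(u+t\varphi))-I_\mu(s_t*u)}{t}=I_\mu'\big(s_t*(u+\tau_t t\varphi)\big)[s_t*\varphi].
\]
By continuity of $s(\cdot)$, $s_t\to s_0$ as $t\to0$. The map $(s,v)\mapsto s*v$ is continuous from $\mathbb{R}\times\Upsilon$ to $\Upsilon$ (checked norm by norm on $\int|\nabla\cdot|^\theta$, $\int|\nabla\cdot|^2$, $\int|\cdot|^2$), so $s_t*(u+\tau_tt\varphi)\to s_0*u$ and $s_t*\varphi\to s_0*\varphi$ in $\Upsilon$. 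Continuity of $I_\mu'$ then makes the right-hand side tend to the same limit. Taking $t\to0^\pm$ separately, with the inequalities flipping when dividing by $t<0$, shows that the Gateaux derivative exists and equals $I_\mu'(s(u)*u)[s(u)*\varphi]$.

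The explicit formula follows by computing $I_\mu'(s*u)[s*\varphi]$ with the change of variables $y=e^sx$. Using $\nabla(s*u)(x)=e^{\frac{5}{2}s}(\nabla u)(e^sx)$, each term scales as follows:
\begin{itemize}
\item the $\theta$-term gives $e^{(\theta+\frac{3}{2}\theta-3)s}=e^{\theta(1+\gamma_\theta)s}$;
\item the gradient term gives $e^{5s-3s}=e^{2s}$;
\item the quasilinear terms give $e^{(3+5-3)s}=e^{5s}$;
\item the nonlinearity gives $e^{\frac{3}{2}s-3s}\int h(e^{\frac{3}{2}s}u)\varphi=e^{-\frac{3}{2}s}\int h(e^{\frac{3}{2}s}u)\varphi$.
\end{itemize}
This matches the displayed expression. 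Finally, the formula is continuous in $u$: it is built from $s(\cdot)$, which is continuous, and $I'_\mu$, which is continuous. Hence $\Psi_\mu\in\mathcal{C}^1$, because a continuous Gateaux derivative is a Fréchet derivative.

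The main obstacle is the upper-bound step. It requires uniform control of $I_\mu'$ along the moving points $s_t*(u+\tau_tt\varphi)$, which in turn rests on the joint continuity of $(s,v)\mapsto s*v$ in $\Upsilon$ together with Lemma \ref{lem2.4}(2). For the quasilinear part $\int|v|^2\nabla v\cdot\nabla\psi$, I expect to need the uniform bound on $\|v\|_\infty$-type quantities coming from $\Upsilon\subset W^{1,\theta}$ with $\theta>\frac{12}{5}$. This is exactly what makes $I_\mu$ of class $\mathcal{C}^1$ on $\Upsilon$, and I will take it as given from the paper's earlier statement that $I_\mu\in\mathcal{C}^1(\Upsilon)$.
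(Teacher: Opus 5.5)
Your proposal is correct and is essentially the paper's proof: the same two-sided squeeze from the maximality of $s(u)$ and $s(u+t\varphi)$, the mean value theorem on the upper bound combined with $s_t\to s_0$ from Lemma~\ref{lem2.4}(2), and a change of variables; your explicit use of the joint continuity of $(s,v)\mapsto s*v$ in $\Upsilon$ spells out what the paper leaves implicit. One aside is inaccurate but harmless, since you take $I_\mu\in\mathcal{C}^1(\Upsilon)$ as given: for $\theta<3$ the space $W^{1,\theta}(\mathbb{R}^3)$ does not embed into $L^\infty$, and what actually makes $\int|u|^2|\nabla u|^2$ a bounded quartic form on $\Upsilon$ is the embedding $W^{1,\theta}\subset L^{3\theta/(3-\theta)}$ with $\frac{3\theta}{3-\theta}>12>\frac{2\theta}{\theta-2}$ for $\theta>\frac{12}{5}$, together with interpolation with $L^2$.
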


\begin{proof}
Let $u \in \Upsilon \backslash \{0\}$, $\varphi \in \Upsilon$, $u_t=u+t\varphi$ and $s_t=s(u_t)$ with $|t|$ small enough. It follows from the mean value theorem that
\begin{align*}
\Psi_\mu(u_t)-\Psi_\mu(u)
=&\ I_\mu(s_t*u_t)-I_\mu(s_0*u) \leq I_\mu(s_t*u_t)-I_\mu(s_t*u)  \\
  =&\ \mu e^{\theta(1+\gamma_\theta)s_t} \int_{\mathbb{R}^3} \left(|\nabla u_{\xi_t}|^{\theta-2}\nabla u_{\xi_t} \cdot \nabla \varphi\right)  t +e^{2s_t}\int_{\mathbb{R}^3} \left(\nabla u_{\xi_t} \cdot \nabla \varphi\right) t  \\
&+2e^{5s_t}\int_{\mathbb{R}^3} \left(|u_{\xi_t}|^2 \nabla u_{\xi_t} \cdot \nabla \varphi
 + u_{\xi_t}\varphi |\nabla u_{\xi_t}|^2\right)t -e^{-\frac{3}{2}s_t}\int_{\mathbb{R}^3} \left[h\left(e^{\frac{3}{2}s_t}u_{\xi_t}\right)\varphi\right] t,
\end{align*}
where $|\xi_t| \in (0,|t|).$
On the other hand,
\begin{align*}
 \Psi_\mu(u_t)-\Psi_\mu(u) =&\ I_\mu(s_t*u_t)-I_\mu(s_0*u) \geq I_\mu(s_0*u_t)-I_\mu(s_0*u)  \\
  =&\ \mu e^{\theta(1+\gamma_\theta)s_0} \int_{\mathbb{R}^3} \left(|\nabla u_{\kappa_t}|^{\theta-2}\nabla u_{\kappa_t} \cdot \nabla \varphi\right) t +e^{2s_0}\int_{\mathbb{R}^3} \left(\nabla u_{\kappa_t} \cdot \nabla\varphi\right) t  \\
&+2e^{5s_0}\int_{\mathbb{R}^3} \left(|u_{\kappa_t}|^2 \nabla u_{\kappa_t} \cdot \nabla \varphi
 + u_{\kappa_t}\varphi |\nabla u_{\kappa_t}|^2\right)t -e^{-\frac{3}{2}s_0}\int_{\mathbb{R}^3} \left[h\left(e^{\frac{3}{2}s_0}u_{\kappa_t}\right)\varphi \right]t,
\end{align*}
where $|\kappa_t| \in (0,|t|).$

Since $\lim_{t \rightarrow 0} s_t = s_0$, we obtain that
\begin{align*}
\Psi'_\mu(u)[\varphi]=&\ \lim_{t \rightarrow 0} \frac{\Psi_\mu(u_t)-\Psi_\mu(u)}{t}  \\
=&\ \mu e^{\theta(1+\gamma_\theta)s(u)} \int_{\mathbb{R}^3} |\nabla u|^{\theta-2}\nabla u \cdot \nabla \varphi       +e^{2s(u)}\int_{\mathbb{R}^3} \nabla u \cdot \nabla \varphi  \\
 &+2e^{5s(u)}\int_{\mathbb{R}^3} |u|^2 \nabla u \cdot \nabla\varphi + u\varphi |\nabla u|^2 -e^{-\frac{3}{2}s(u)}\int_{\mathbb{R}^3} h\left(e^{\frac{3}{2}s(u)}u\right)\varphi .
\end{align*}
By changing variables in the integrals, we can get that
$$\Psi'_\mu(u)[\varphi]=I'_\mu(s(u)*u)[s(u)*\varphi].$$
\end{proof}

It is easy to see that the functional $\Psi_\mu$ is of class $\mathcal{C}^1$. Similar to \cite[Lemma 3.12]{LZ2023} and by Lemma \ref{lem5.2}, we have the next lemma.

\begin{lemma}\label{lem5.3}
Assume $h$ satisfies $(h_1)-(h_5)$. Let $\mathcal{F}$ is a $\tau$-homotopy stable family of compact subsets of $Y=\mathcal{S}_r(a)$ with boundary $B = \emptyset$, and set
$$d:=\inf_{A \in \mathcal{F}} \max_{u \in A} \Psi_\mu(u).$$
If $d>0$, then there exists a sequence $u_n \in S_r(a)$ such that
$$I_\mu(u_n) \rightarrow d,\; I_\mu|'_{\mathcal{S}(a)}(u_n) \rightarrow 0,\; P_\mu(u_n)=0.$$
\end{lemma}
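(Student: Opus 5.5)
We will apply the $\tau$-equivariant minimax principle \cite[Theorem 7.2]{G1993} (the symmetric analogue of Proposition \ref{prop3.2}) to the even $\mathcal{C}^1$ functional $\Psi_\mu$ on the complete $\mathcal{C}^1$-Finsler manifold $Y=\mathcal{S}_r(a)$. The family $\mathcal{F}$ has boundary $B=\emptyset$, and we take $F=Y$. Then \eqref{eq3.4} holds trivially, and \eqref{eq3.5} reduces to $\sup\Psi_\mu(\emptyset)\le d$, which is vacuous.

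First we pick a minimizing family $A_n\in\mathcal{F}$ with $\max_{A_n}\Psi_\mu\to d$ and modify it. The map $\eta(t,u)=(1-t)u+t\,(s(u)*u)$ is a priori not norm-preserving, so instead we use the path $\eta(t,u)=(t s(u))*u$. This map is continuous by Lemma \ref{lem2.4}(2). It is $\tau$-equivariant because $s(-u)=s(u)$ by Lemma \ref{lem2.4}(3). It preserves $\mathcal{S}_r(a)$ and satisfies $\eta(0,\cdot)=\mathrm{id}$. Hence $D_n:=\eta(1,A_n)=\{s(u)*u:u\in A_n\}\in\mathcal{F}$.

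Since $s(s(u)*u)=0$ and $\Psi_\mu(s(u)*u)=\Psi_\mu(u)$, we get $\max_{D_n}\Psi_\mu=\max_{A_n}\Psi_\mu\to d$, and $D_n\subset\mathcal{P}_\mu(a)$. The principle then produces $v_n\in Y$ with $\Psi_\mu(v_n)\to d$, $\|\Psi_\mu|_Y'(v_n)\|\to0$, and $\mathrm{dist}(v_n,D_n)\to0$.

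Next we set $u_n:=s(v_n)*v_n$. Then $P_\mu(u_n)=0$ by Lemma \ref{lem2.4}(1) and $I_\mu(u_n)=\Psi_\mu(v_n)\to d$. Using Lemma \ref{lem5.2}, for $\psi\in T_{u_n}\mathcal{S}(a)$ we have
$$I'_\mu(u_n)[\psi]=\Psi'_\mu(v_n)[(-s(v_n))*\psi].$$
The map $\psi\mapsto(-s(v_n))*\psi$ sends $T_{u_n}\mathcal{S}(a)$ onto $T_{v_n}\mathcal{S}(a)$, because $\int u_n\psi=\int v_n\,((-s(v_n))*\psi)$. Its operator norm on $\Upsilon$ is bounded by $C\,e^{c|s(v_n)|}$, since each of the $L^2$, $\nabla$-$L^2$ and $\nabla$-$L^\theta$ norms scales by an explicit exponential factor. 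It follows that $\|I_\mu|'_{\mathcal{S}(a)}(u_n)\|_*\le Ce^{c|s(v_n)|}\|\Psi_\mu|'_Y(v_n)\|$. Radial symmetry is preserved throughout, and the principle of symmetric criticality \cite{P1979} upgrades the constrained derivative on $\mathcal{S}_r(a)$ to one on $\mathcal{S}(a)$.

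The main obstacle is showing that $\{s(v_n)\}$ is bounded. We handle it as follows. Choose $w_n\in D_n$ with $\|v_n-w_n\|_\Upsilon\to0$; these satisfy $s(w_n)=0$ and $I_\mu(w_n)\le\max_{D_n}\Psi_\mu\le d+1$. Lemma \ref{lem2.5}(3) gives a uniform bound on $\{w_n\}$ in $\Upsilon$ together with a bound on $\int|w_n|^2|\nabla w_n|^2$. Lemma \ref{lem2.5}(1) together with Lemma \ref{lem2.2} gives a lower bound $\int|\nabla w_n|^2+\int|w_n|^2|\nabla w_n|^2\ge c>0$. Indeed, if this quantity were below $\zeta$, \eqref{eq2-10-1} would give $P_\mu(w_n)>0$.

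We then rerun the argument of Lemma \ref{lem2.4}(2) uniformly along the sequence to bound $s(v_n)$.
\begin{itemize}
\item If $s(v_n)\to+\infty$, we use $e^{-5s}I_\mu(s*v_n)\ge0$ and split according to vanishing or nonvanishing of $v_n$, exactly as in Cases 1 and 2 of Lemma \ref{lem2.5}(3); the lower bound above excludes the degenerate situation.
\item If $s(v_n)\to-\infty$, then $I_\mu(s(v_n)*v_n)\to0$, contradicting $d>0$.
\end{itemize}
This yields $|s(v_n)|\le C$, so $I_\mu|'_{\mathcal{S}(a)}(u_n)\to0$, which completes the proof.
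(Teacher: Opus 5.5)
Your strategy matches the one the paper relies on. The paper gives no proof and defers to \cite[Lemma 3.12]{LZ2023}, which adapts Jeanjean--Lu: deform a minimizing sequence onto $\mathcal{P}_\mu(a)$ with the odd map $\eta(t,u)=(ts(u))*u$, apply Ghoussoub's $\tau$-equivariant principle to $\Psi_\mu$, set $u_n=s(v_n)*v_n$, and use Lemma \ref{lem5.2} together with a bound on $s(v_n)$ to transfer the derivative.

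\textbf{Incorrect step: the choice $F=Y$.} With $F=Y$, hypothesis \eqref{eq3.5} is not vacuous, because its right-hand half requires $d\le\inf_Y\Psi_\mu$. Testing with $A=\{u,-u\}$ shows $\inf_Y\Psi_\mu=c^1_\mu(a)$. The lemma, however, is applied with $d=c^j_\mu(a)$, which exceeds $c^1_\mu(a)$ for large $j$ by Lemma \ref{lem5.5}(4). The fix is the standard one: take $F:=\{u\in Y:\Psi_\mu(u)\ge d\}$. This set is closed, it meets every $A\in\mathcal{F}$ because $A$ is compact with $\max_A\Psi_\mu\ge d$, and $\inf\Psi_\mu(F)\ge d$. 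Conclusion (3) of the principle is never used, so nothing else changes.

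\textbf{Two smaller points.} First, you transfer bounds from $w_n$ to $v_n$: both the lower bound and the bound on $\int|v_n|^2|\nabla v_n|^2$. This needs $u\mapsto\int|u|^2|\nabla u|^2$ to be bounded and uniformly continuous on bounded subsets of $\Upsilon$. That holds because $\theta>\frac{12}{5}$ gives $\int|u|^2|\nabla u|^2\le\|u\|^2_{2\theta/(\theta-2)}\|\nabla u\|^2_\theta$ with $\Upsilon\hookrightarrow L^{2\theta/(\theta-2)}$, but you should state it.

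Second, your contradiction argument for $|s(v_n)|\le C$ is correct but longer than needed. Set $Q(u):=\int|\nabla u|^2+\int|u|^2|\nabla u|^2$ and $s_n:=s(v_n)$.
\begin{itemize}
\item Lemma \ref{lem2.2} and Lemma \ref{lem2.5}(3) give $\zeta\le Q(u_n)\le C$, since $P_\mu(u_n)=0$ and $I_\mu(u_n)\to d$.
\item The $w_n$ give $c\le Q(v_n)\le C$.
\item Since $Q(u_n)=e^{2s_n}\int|\nabla v_n|^2+e^{5s_n}\int|v_n|^2|\nabla v_n|^2$, it lies between $\min\{e^{2s_n},e^{5s_n}\}Q(v_n)$ and $\max\{e^{2s_n},e^{5s_n}\}Q(v_n)$.
\end{itemize}
This bounds $s_n$ at once, with no vanishing/nonvanishing dichotomy.
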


We recall the definition of the genus of $\tau$-invariant sets due to Krasnoselskii and refer the readers to \cite{R1986}.
\begin{definition}
For any nonempty closed $\tau$-invariant set $A \subset \Upsilon$, the genus of A is defined by
\begin{align*}
    {\rm gen}(A):=\min\{k \in \mathbb{N}^+\;| \;\exists\, \phi:A \mapsto \mathbb{R}^k \backslash \{0\}, \phi \;is \; odd \; and \; continuous\}.
\end{align*}
\end{definition}
We take
\begin{align*}
  \begin{split}
     \mbox{gen}(A)=\left \{
        \begin{array}{ll}
            +\infty,    &\mbox{finite} \; k \; \mbox{does} \; \mbox{not}\; \mbox{exist}, \\
            0,          &A =\emptyset.
        \end{array}
     \right.
  \end{split}
\end{align*}
Let $\mathcal{A}(a)$ be the family of compact $\tau$-invariant subsets of $\mathcal{S}_r(a)$. For any $j \in \mathbb{N}^+$, we define
$$\mathcal{A}_j(a):=\{A \in \mathcal{A}(a)\;|\; \mbox{gen}(A) \geq j\}$$
and
$$c^j_\mu(a):=\inf_{A \in \mathcal{A}_j(a)} \max_{u \in A} \Psi_\mu(u).$$

By a similar approach to  \cite[Lemma 3.13]{LZ2023}, we obtain the next lemma.
\begin{lemma}\label{lem5.5}
Let $h$ satisfy $(h_1)-(h_5)$. Then we have   \\
(1) $\mathcal{A}_j(a) \neq \emptyset$ for any $j\in \mathbb{N}^+$, and $\mathcal{A}_j(a)$ is a $\tau$-homotopy stable family of compact subsets of $\mathcal{S}_r(a)$ with boundary $B = \emptyset$;   \\
(2) $c^{j+1}_\mu(a) \geq c^j_\mu(a) \geq \frac{\zeta}{8} >0$ for any $\mu \in (0,1]$ and $j\in \mathbb{N}^+$;  \\
(3) $c^j_\mu(a)$ is nondecreasing with respect to $\mu \in (0,1]$ for any $j\in \mathbb{N}^+$;   \\
(4) $\tilde{b}^j(a):= \inf_{0<\mu \leq 1} c^j_\mu(a) \rightarrow +\infty$ as $j \rightarrow +\infty$.
\end{lemma}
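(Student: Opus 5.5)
We prove the four items in order, following the scheme of \cite[Lemma 3.13]{LZ2023} and \cite{JL2020}, using Lemma \ref{lem2.4} and Lemma \ref{lem2.5} in place of explicit power computations.

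For (1), fix $j$. We take $j$ linearly independent radial functions in $\mathcal{C}^\infty_0(\mathbb{R}^3)$, let $V_j$ be their span, and set $A:=V_j\cap\mathcal{S}_r(a)$. This is a sphere in a $j$-dimensional space, so it is compact and $\tau$-invariant, and $\mathrm{gen}(A)=j$ by the Borsuk--Ulam property of the genus. Hence $\mathcal{A}_j(a)\neq\emptyset$. Homotopy stability with $B=\emptyset$ follows from two facts: if $\eta$ is $\tau$-equivariant and continuous, then $\eta(1,A)$ is compact and $\tau$-invariant; and $\mathrm{gen}(\eta(1,A))\ge \mathrm{gen}(A)$, because $\eta(1,\cdot)$ is odd and continuous and the genus is monotone under odd continuous maps.

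For (2), monotonicity in $j$ is immediate from $\mathcal{A}_{j+1}(a)\subset\mathcal{A}_j(a)$. For the lower bound, take any $u\in\mathcal{S}_r(a)$. By Lemma \ref{lem2.4}, $s(u)*u\in\mathcal{P}_\mu(a)$, so Lemma \ref{lem2.5}(1) gives $\Psi_\mu(u)=I_\mu(s(u)*u)\ge \zeta/8$. Here $\zeta$ comes from Lemma \ref{lem2.2} and does not depend on $\mu$.

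For (3), let $\mu_1<\mu_2$. For each $u$,
\[
\Psi_{\mu_1}(u)=\max_s I_{\mu_1}(s*u)\le \max_s I_{\mu_2}(s*u)=\Psi_{\mu_2}(u),
\]
using the max characterization in Lemma \ref{lem2.4}(1). The class $\mathcal{A}_j(a)$ does not depend on $\mu$, so taking $\inf\max$ gives $c^j_{\mu_1}(a)\le c^j_{\mu_2}(a)$.

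(4) is the main step. By (3), $\tilde b^j(a)\ge \inf_{A\in\mathcal{A}_j(a)}\max_A \Psi_0$, where $\Psi_0(u):=\sup_s I(s*u)\le\Psi_\mu(u)$. It therefore suffices to show that $\inf_{A\in\mathcal{A}_j(a)}\max_{u\in A}\sup_s I(s*u)\to+\infty$. The plan is the standard Galerkin/complement argument.

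\begin{itemize}
\item Take a nested sequence of finite-dimensional radial subspaces $E_k$ whose union is dense in $\Upsilon_r$, and let $E_k^\perp$ denote their complements. Any compact $\tau$-invariant $A$ with $\mathrm{gen}(A)\ge k+1$ meets $E_k^\perp$, by the intersection property of the genus.
\item Define
\[
\beta_k:=\sup\Big\{\int_{\mathbb{R}^3}|u|^{16/3}+\int_{\mathbb{R}^3}|u|^6:\ u\in E_k^\perp,\ \int_{\mathbb{R}^3}|\nabla u|^2+\int_{\mathbb{R}^3}|u|^2|\nabla u|^2=1,\ \|u\|_2=a\Big\}.
\]
Using the compact radial embedding into $L^{16/3}$ (with the GNI control \eqref{GNI}) and weak limits lying in $\bigcap_k E_k^\perp=\{0\}$, one shows $\beta_k\to0$.
\item For $u\in A\cap E_k^\perp$, rescale by $s*u$ so that $\int|\nabla(s*u)|^2+\int|s*u|^2|\nabla(s*u)|^2=R$, with $R$ large. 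By \eqref{eq2-3} and the scaling estimates of Lemma \ref{lem2.2}, $\int H(s*u)$ is bounded by terms of the form $C\beta_k\cdot(R^{\alpha}+R^3)$.
\item Choosing $R=R_k\to\infty$ with $\beta_k R_k^{3}\to0$ then gives $I(s*u)\ge R_k/4-o(1)$, hence $\max_A\Psi_\mu\ge R_k/4-o(1)$ uniformly in $\mu$.
\end{itemize}

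The main obstacle is the third bullet. Under the $L^2$ constraint, the $L^6$ term is only controlled through $\|\nabla u\|_2^6$, which scales badly, so the rescaled norms must be split carefully, using GNI for $L^{16/3}$ and interpolation between $L^{16/3}$ and $L^{12}$ for the $C_\varepsilon|t|^6$ part, so that the smallness $\beta_k$ beats the growth in $R$. If this is delicate, we would instead define $\beta_k$ with $L^{q}$, $q\in(16/3,12)$, normalized by the quasilinear term, and use \eqref{eq2-4}.
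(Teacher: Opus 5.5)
Your items (1)--(3) are correct, but step (4) has a genuine gap in the third bullet. Your $\beta_k$ only controls elements of $E_k^\perp\cap\mathcal{S}_r(a)$ that lie on the level $N(u):=\int|\nabla u|^2+\int|u|^2|\nabla u|^2=1$. A point $u\in A\cap E_k^\perp$ has $N(u)$ arbitrary, and you cannot move it to that level inside $E_k^\perp\cap\mathcal{S}_r(a)$: scalar multiples change the mass, and $E_k^\perp$ is not invariant under $u\mapsto s*u$. So $\beta_k$ covers neither $u$ nor $s*u$, and nothing justifies the bound $\int H(s*u)\le C\beta_k(R^\alpha+R^3)$. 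Making $\beta_k$ scale invariant does not repair this, e.g.\ taking the supremum over $E_k^\perp$ of $\int|u|^{16/3}/(\|u\|_2^{4/3}\int|u|^2|\nabla u|^2)$. Concentrating radial bumps $\lambda^{3/2}v(\lambda x)$, $\lambda\to\infty$, are asymptotically orthogonal to the fixed smooth space $E_k$ and keep that ratio bounded away from $0$. So the obstruction is not the $L^6$ exponent you flag, and your $L^q$ fallback does not address it.

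The missing idea is to normalize the whole set before using the intersection property. For $u\in\mathcal{S}_r(a)$, let $s_1(u)$ be the unique $s$ with $N(s*u)=1$; this exists because $s\mapsto e^{2s}\int|\nabla u|^2+e^{5s}\int|u|^2|\nabla u|^2$ increases strictly from $0$ to $+\infty$. Then $s_1$ is continuous and even, so $u\mapsto s_1(u)*u$ is odd and continuous. Hence $\tilde A:=\{s_1(u)*u:\ u\in A\}$ is compact and $\tau$-invariant, with $\mathrm{gen}(\tilde A)\ge\mathrm{gen}(A)\ge k+1$. Intersecting $\tilde A$ with $E_k^\perp$ gives $v=s_1(u)*u\in E_k^\perp\cap\mathcal{S}_r(a)$ with $N(v)=1$, so $\int|v|^{16/3}+\int|v|^6\le\beta_k$. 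From here your computation is right. If $N(s*v)=R\ge 1$, then $s\ge0$ and $e^{2s}\le R$, so $\int H(s*v)\le C\beta_k(R^{5/2}+R^3)$ and
\[
\max_A\Psi_\mu\ \ge\ I_\mu\big((s+s_1(u))*u\big)\ \ge\ I(s*v)\ \ge\ \tfrac12 R-C\beta_k\big(R^{5/2}+R^3\big),
\]
uniformly in $\mu$. Taking $R=R_k\to\infty$ with $\beta_kR_k^3\to0$ yields (4).

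One technical point for $\beta_k\to0$: the near-maximizers are bounded in $H^1_r$ and in $\int|u|^2|\nabla u|^2$, but not in $W^{1,\theta}$. So the weak limit exists only in $H^1_r$, and you should define $E_k^\perp$ by $H^1$- (or $L^2$-) orthogonality so that the limit lies in every $E_k^\perp$ and hence vanishes. For comparison, the paper gives no argument of its own here; it only refers to \cite[Lemma 3.13]{LZ2023}.
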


For any fixed $\mu \in (0,1]$ and any $j\in \mathbb{N}^+$, taking into account Lemma \ref{lem5.3} and Lemma \ref{lem5.5}, we may get a sequence $u_n \in \mathcal{S}_r(a)$ such that
$$I_\mu(u_n) \rightarrow c^j_\mu(a),\; I_\mu|'_{\mathcal{S}(a)}(u_n) \rightarrow 0,\;  P_\mu(u_n)=0,$$
as $n \rightarrow +\infty$.

Therefore, similar to Lemma \ref{lem3.5}, we obtain the next lemma.
\begin{lemma}\label{lem5.6}
If $h$ satisfies $(h_1)-(h_5)$, then for any fixed $\mu \in (0,1]$ and any $j\in \mathbb{N}^+$, there exist a  $u^j_\mu \in \Upsilon_r \backslash \{0\}$ and a $\lambda^j_\mu \in \mathbb{R}$ such that   \\
(1) $I'_\mu(u^j_\mu) +\lambda^j_\mu u^j_\mu=0;$  \\
(2) $P_\mu(u^j_\mu)=0;$  \\
(3) $I_\mu(u^j_\mu)=c^j_\mu(a);$  \\
(4) $0<\|u^j_\mu\|_2 \leq a;$ furthermore, if $\lambda^j_\mu \neq 0$, then $\|u^j_\mu\|_2 = a.$
\end{lemma}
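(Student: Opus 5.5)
The proof follows that of Lemma \ref{lem3.5} almost line by line. The only change is the source of the Palais–Smale sequence. Fix $\mu\in(0,1]$ and $j\in\mathbb{N}^+$. By Lemma \ref{lem5.5}(1)--(2), $\mathcal{A}_j(a)$ is a $\tau$-homotopy stable family with empty boundary, and $c^j_\mu(a)\geq\frac{\zeta}{8}>0$. So Lemma \ref{lem5.3} applies with $d=c^j_\mu(a)$ and gives $\{u_n\}\subset\mathcal{S}_r(a)$ with
$$I_\mu(u_n)\to c^j_\mu(a),\qquad I_\mu|'_{\mathcal{S}(a)}(u_n)\to0,\qquad P_\mu(u_n)=0.$$
Since $u_n\in\mathcal{P}_\mu(a)$ and $\sup_n I_\mu(u_n)<\infty$, Lemma \ref{lem2.5}(3) bounds $\mu\int|\nabla u_n|^\theta$, $\int|\nabla u_n|^2$ and $\int|u_n|^2|\nabla u_n|^2$. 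Hence $\{u_n\}$ is bounded in $\Upsilon_r$. Passing to a subsequence, $u_n\rightharpoonup u^j_\mu$ in $\Upsilon_r$, $u_n\nabla u_n\rightharpoonup u^j_\mu\nabla u^j_\mu$ in $L^2$, $u_n\to u^j_\mu$ in $L^q$ for $q\in(2,6)$ by radial compactness, and $u_n\to u^j_\mu$ a.e. Lemma \ref{lem02-2} then gives convergence of $\int h(u_n)u_n$, $\int H(u_n)$ and $\int\tilde H(u_n)$. Nonnegativity is not claimed here, so the $u_n^-$ information used before is not needed.

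\emph{Nontriviality.} If $u^j_\mu=0$, then $\int\tilde H(u_n)\to0$. Combined with $P_\mu(u_n)=0$, this forces all three gradient terms to vanish, so $I_\mu(u_n)\to0$. This contradicts $c^j_\mu(a)>0$.

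\emph{Limit equation.} By \cite[Lemma 3]{BL1983} there are $\lambda_n$ with $I'_\mu(u_n)+\lambda_nu_n\to0$ in $\Upsilon^*$. Here $\lambda_n=-a^{-2}I'_\mu(u_n)[u_n]+o_n(1)$ is bounded, so $\lambda_n\to\lambda^j_\mu$ along a subsequence. Passing to the weak limit gives (1), namely $I'_\mu(u^j_\mu)+\lambda^j_\mu u^j_\mu=0$, exactly as in \eqref{eq3.9}. The Pohozaev identity for critical points then gives (2).

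\emph{Energy level.} This is the step needing care: we must show $I_\mu(u^j_\mu)=c^j_\mu(a)$ despite the non-convex quasilinear term. I would copy step (3) of Lemma \ref{lem3.5}. First,
$$P_\mu(u_n)+\tfrac32\int\tilde H(u_n)\to P_\mu(u^j_\mu)+\tfrac32\int\tilde H(u^j_\mu),$$
since both sides equal $\tfrac32\int\tilde H$ of the respective function. Thus the positive combination $\mu(1+\gamma_\theta)\int|\nabla\cdot|^\theta+\int|\nabla\cdot|^2+5\int|\cdot|^2|\nabla\cdot|^2$ converges. Each of its three terms is weakly lower semicontinuous (\cite[Lemma 4.3]{CJS2010} for the quasilinear term). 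A sum of l.s.c. terms with positive coefficients converging to its limit forces each term to converge. This yields the analogues of \eqref{eq3.10}--\eqref{eq3.12}, and hence $I_\mu(u^j_\mu)=\lim I_\mu(u_n)=c^j_\mu(a)$, which is (3).

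\emph{Mass.} The same convergences give $I'_\mu(u_n)[u_n]\to I'_\mu(u^j_\mu)[u^j_\mu]$. Comparing with (1), we get $\lambda^j_\mu\|u_n\|_2^2\to\lambda^j_\mu\|u^j_\mu\|_2^2$. Since $\|u_n\|_2=a$, if $\lambda^j_\mu\neq0$ then $\|u^j_\mu\|_2=a$. In all cases $0<\|u^j_\mu\|_2\leq a$ by weak lower semicontinuity of the norm, which is (4).
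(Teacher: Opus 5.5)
Your proposal is the paper's own argument. The paper takes the sequence $u_n\in\mathcal{S}_r(a)$ with $I_\mu(u_n)\to c^j_\mu(a)$, $I_\mu|'_{\mathcal{S}(a)}(u_n)\to0$, $P_\mu(u_n)=0$ from Lemmas \ref{lem5.3} and \ref{lem5.5}, says the rest is ``similar to Lemma \ref{lem3.5}'', and you carry that out, rightly dropping the nonnegativity part. The one step you treat as lightly as the paper does at \eqref{eq3.9} is passing to the limit in $I'_\mu(u_n)+\lambda_n u_n\to0$: weak convergence alone does not handle $|\nabla u_n|^{\theta-2}\nabla u_n$ or $u_n|\nabla u_n|^2$, which need a.e.\ convergence of $\nabla u_n$ or an $(S_+)$-type argument, and your energy step depends on this through $P_\mu(u^j_\mu)=0$.
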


Now, we can complete the proof of Theorem \ref{Thm1.2}.

\begin{proof}[Proof of Theorem \ref{Thm1.2}]
It follows from Lemma \ref{lem5.5} that
$$\tilde{b}^j(a)= \lim_{\mu \rightarrow 0^+} c^j_\mu(a)\geq \frac{\zeta}{8}>0,\; \tilde{b}^j(a) \rightarrow +\infty.$$
By Lemma \ref{lem5.6}, for any $j\in \mathbb{N}^+$, there exists a sequence $\{\mu^j_n\}$ such that
$$\mu^j_n \rightarrow 0^+,\; I'_{\mu^j_n}\left(u^j_{\mu^j_n}\right) +\lambda^j_{\mu^j_n} u^j_{\mu^j_n}=0,\; I_{\mu^j_n}\left(u^j_{\mu^j_n}\right)=c^j_{\mu^j_n}(a) \rightarrow \tilde{b}^j(a),$$
as $n \rightarrow +\infty$, where $u^j_{\mu^j_n} \in \mathcal{S}_r(a^j_n)$ with $0<a^j_n \leq a$. From Lemma \ref{lem3.7}, we have $\lambda^j_{\mu^j_n}>0$, which means $a^j_n=\left\|u^j_{\mu^j_n}\right\|_2=a$.
As a result, Lemma \ref{lem4.1} implies that there exist a $u^j \in H^1_r(\mathbb{R}^3) \cap L^\infty(\mathbb{R}^3)$ and a $\lambda^j \in \mathbb{R}$ such that
$$\Theta(u^j,\lambda^j)=0,\; I(u^j)=\tilde{b}^j(a),\; 0<\|u^j\|_2\leq a.$$
It follows from Lemma \ref{lem3.7} that $\lambda^j>0$. Therefore, $\|u^j\|_2=\lim_{n\rightarrow \infty}a^j_n =a $. Moreover, $I(u^j)=\tilde{b}^j(a) \rightarrow +\infty$.
\end{proof}

\section{Proof of Theorem \ref{Thm1.5}}\label{sec6}

We split the proof of Theorem \ref{Thm1.5} into several Lemmas.

\begin{lemma}\label{lem6.3}
    Let $(h_1)-(h_4)$ and $(h_5')$ hold, then the function $a \mapsto m(a)$ is lower semicontinuous with respect to any $a>0$.
\end{lemma}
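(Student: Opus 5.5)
We prove lower semicontinuity directly from the definition $m(a)=\inf_{u\in\mathcal{P}'(a)}I(u)$. Fix $a>0$ and a sequence $a_n\to a$; we must show $m(a)\le\liminf_{n\to\infty}m(a_n)$. Passing to a subsequence, assume $\lim_n m(a_n)=\liminf_n m(a_n)<+\infty$; otherwise there is nothing to prove. For each $n$ pick $u_n\in\mathcal{P}'(a_n)$ with $I(u_n)\le m(a_n)+\frac1n$. The idea is to rescale the amplitude so that the mass becomes $a^2$, and then project back onto the Pohozaev manifold with the fibering map $s*$. Define
$$v_n:=\frac{a}{a_n}u_n\in\mathcal{S}'(a),\qquad t_n:=\frac{a}{a_n}\to1 .$$

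\textbf{Step 1 (bounds on $u_n$).} Since $P(u_n)=0$, the computation of Lemma \ref{lem2.5}(3) with $\mu=0$ applies verbatim; it uses Lemma \ref{lem2.1} and the vanishing/nonvanishing dichotomy, and $a_n$ stays in a compact subset of $(0,\infty)$. It gives that $\int|\nabla u_n|^2$ and $\int|u_n|^2|\nabla u_n|^2$ are bounded. The same scaling argument as in Lemma \ref{lem2.5}(1), based on Lemma \ref{lem2.2} (whose constant $\zeta$ can be taken uniform for masses in a compact range), gives a uniform lower bound $\int|\nabla u_n|^2+\int|u_n|^2|\nabla u_n|^2\ge c_0>0$.

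\textbf{Step 2 (projection onto $\mathcal{P}'(a)$).} By the $\mu=0$ analogue of Lemma \ref{lem2.4}, whose proof uses only $(h_1)$--$(h_4)$ and $(h_5')$, there is a unique $s_n$ with $s_n*v_n\in\mathcal{P}'(a)$, and $I(s_n*v_n)=\max_s I(s*v_n)$. Hence $m(a)\le I(s_n*v_n)$.

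We then show that $(s_n)$ is bounded, repeating the Claim in Lemma \ref{lem2.4}(2).
\begin{itemize}
\item If $s_n\to+\infty$, then $e^{-5s_n}I(s_n*v_n)\ge0$ while $(h_4)$ and Fatou force it to $-\infty$. This requires $v_n$ not to vanish; after translation this holds by the $\rho>0$ alternative of Lemma \ref{lem2.5}(3), since vanishing would contradict $P(u_n)=0$ together with the lower bound of Step 1.
\item If $s_n\to-\infty$, then $I(s_n*v_n)\to0$. This contradicts $I(s_n*v_n)\ge I(v_n)$ as soon as $\liminf_n I(v_n)>0$, which follows from Step 3 below together with $I(u_n)\ge\zeta/8$.
\end{itemize}

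\textbf{Step 3 (comparison of energies).} We compare $I(s_n*v_n)$ with $I(s_n*u_n)\le I(u_n)$; the last inequality holds because $u_n\in\mathcal{P}'(a_n)$ maximizes $I$ along its fiber. We have
$$I(s_n*v_n)-I(s_n*u_n)=\tfrac{t_n^2-1}{2}e^{2s_n}\!\int|\nabla u_n|^2+(t_n^4-1)e^{5s_n}\!\int|u_n|^2|\nabla u_n|^2-e^{-3s_n}\!\int\big[H(t_ne^{\frac32 s_n}u_n)-H(e^{\frac32 s_n}u_n)\big].$$
The first two terms are $o_n(1)$ by Steps 1 and 2. For the last term we use the mean value theorem together with \eqref{eq2-3} and \eqref{GNI}: it is bounded by $C|t_n-1|$ times a bounded quantity. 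Therefore
$$m(a)\le I(s_n*v_n)\le I(u_n)+o_n(1)\le m(a_n)+o_n(1),$$
which gives $m(a)\le\liminf_n m(a_n)$.

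\textbf{Main obstacle.} The delicate point is the boundedness of $s_n$ from above, that is, the nonvanishing of $v_n$ needed to apply $(h_4)$ via Fatou. This is where the non-radial setting bites. If needed, we first replace $u_n$ by its Schwarz symmetrization, which by \cite{LL1997} does not increase the gradient terms. The resulting radial function still lies on a fiber that reaches $\mathcal{P}'(a_n)$, and the fiber maximum does not increase, so we may assume $u_n$ radial. Then the compact embedding $H^1_r\hookrightarrow L^{16/3}$ combined with the lower bound of Step 1 excludes vanishing directly.
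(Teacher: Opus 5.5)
Your argument is correct and is essentially the paper's proof. You take near-minimizers on $\mathcal{P}'(a_n)$, rescale them to mass $a^2$, project onto $\mathcal{P}'(a)$ along the fiber $s*$, and compare energies using that $s=0$ maximizes $I$ along the fiber of $u_n$. The upper bound on the fiber parameter comes, as in the paper, from nonvanishing via Lions' lemma, $P(u_n)=0$ and Lemma \ref{lem2.2}, followed by $(h_4)$, Fatou and translation invariance of $s(\cdot)$. Your remark that $\zeta$ is uniform for masses near $a$ is something the paper uses without saying.

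The real differences are small. The paper rescales by the spatial dilation $\tilde v_n(x)=v_n(x/t_n)$ with $t_n=(a/a_n)^{2/3}$. Each term of $I$ then scales by an exact power of $t_n$, so no mean-value estimate on $H$ is needed. The paper also gets its uniform bounds from an explicit estimate $\limsup_n m(a_n)<+\infty$, where you pass to a subsequence realizing the $\liminf$. Your lower bound on $s_n$ and the symmetrization remark are not needed.
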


\begin{proof}
It suffices to prove that for any positive sequence $\{a_n\}$ with $a_n \rightarrow a$ as $n \rightarrow \infty$, we have $m(a) \leq \liminf_{n \rightarrow \infty}m(a_n)$. We divide the proof into two steps.

\textbf{Step 1:}
We prove that $\limsup_{n\rightarrow \infty}m(a_n) \leq C$.

Fix $u \in \mathcal{S}'(a) \cap L^{\infty}(\mathbb{R}^3)$, set $u_n:=\frac{a_n}{a}u  \in \mathcal{S}'(a_n)$. It is clear that $\int_{\mathbb{R}^3} |u_n|^2 |\nabla u_n|^2 \rightarrow \int_{\mathbb{R}^3} |u|^2 |\nabla u|^2$ and $u_n \rightarrow u$ in $H^1(\mathbb{R}^3)$. By Lemma \ref{lem2.4}, there exists a unique $s(u_n) \in \mathbb{R}$ such that $s(u_n)*u_n \in \mathcal{P}'(a_n)$ and $\lim_{n \rightarrow \infty}s(u_n) \rightarrow s(u).$
And thus
 $$s(u_n)*u_n \rightarrow s(u)*u\;\,\mbox{in} \; H^1(\mathbb{R}^3),$$
 $$\int_{\mathbb{R}^3} |s(u_n)*u_n|^2 |\nabla (s(u_n)*u_n)|^2 \rightarrow \int_{\mathbb{R}^3} |s(u)*u|^2 |\nabla (s(u)*u)|^2.$$
As a consequence,
$$\limsup_{n\rightarrow \infty}m(a_n) \leq
\limsup_{n\rightarrow \infty}I(s(u_n)*u_n) =
I(s(u)*u).$$

\textbf{Step 2:}
We show that $m(a)  \leq \liminf_{n\rightarrow \infty}m(a_n)$.

In fact, for each $n \in \mathbb{N}^+$, from the definition of the infimum, there exists a sequence $v_n \in \mathcal{P}'(a_n)$ such that
$$I(v_n) \leq m(a_n)+\frac{1}{n}.$$
Let $t_n:=\left(\frac{a}{a_n}\right)^\frac{2}{3} \rightarrow 1$ as $n \rightarrow \infty$ and $\tilde{v}_n(x):=v_n(\frac{x}{t_n}) \in \mathcal{S}'(a)$. It is easy to see that
\begin{align*}
m(a) \leq&\ I(s(\tilde{v}_n)*\tilde{v}_n)  \\
        \leq&\ I(s(\tilde{v}_n)*\tilde{v}_n)-I(s(\tilde{v}_n)*v_n)+I(s(\tilde{v}_n)*v_n)   \\
        \leq&\ |I(s(\tilde{v}_n)*\tilde{v}_n)-I(s(\tilde{v}_n)*v_n)|+I(v_n)  \\
        \leq&\ |I(s(\tilde{v}_n)*\tilde{v}_n)-I(s(\tilde{v}_n)*v_n)| +m(a_n)+\frac{1}{n}.
\end{align*}
Clearly,
\begin{align*}
    &\ |I(s(\tilde{v}_n)*\tilde{v}_n)-I(s(\tilde{v}_n)*v_n)|    \\
    =&\ \bigg|\frac{1}{2}(t_n-1)\int_{\mathbb{R}^3} |\nabla (s(\tilde{v}_n)*v_n)|^2 + (t_n-1)\int_{\mathbb{R}^3} |s(\tilde{v}_n)*v_n|^2 |\nabla (s(\tilde{v}_n)*v_n)|^2   \\
 &\,-(t_n^3-1)\int_{\mathbb{R}^3}H(s(\tilde{v}_n)*v_n)\bigg|  \\
    \leq&\ \frac{1}{2}|t_n-1|\int_{\mathbb{R}^3} |\nabla (s(\tilde{v}_n)*v_n)|^2 + |t_n-1|\int_{\mathbb{R}^3} |s(\tilde{v}_n)*v_n|^2 |\nabla (s(\tilde{v}_n)*v_n)|^2   \\
 &\,+|t_n^3-1|\int_{\mathbb{R}^3}H(s(\tilde{v}_n)*v_n).
\end{align*}
By Lemma \ref{lem2.5}(3) and Step 1, we can conclude that $\int_{\mathbb{R}^3} |v_n|^2 |\nabla v_n|^2 \leq C$ and $\{v_n\}$ is bounded in $H^1(\mathbb{R}^3)$, which combining with $t_n \rightarrow 1$ as $n \rightarrow \infty$ we see that $\int_{\mathbb{R}^3} |\tilde{v}_n|^2 |\nabla \tilde{v}_n|^2 \leq C$ and $\{\tilde{v}_n\}$ is bounded in $H^1(\mathbb{R}^3)$.

\textbf{Claim 1:}
There exists a sequence  $\{\tilde{y}_n\} \subset \mathbb{R}^3$ and $\tilde{v} \in H^1(\mathbb{R}^3)$ such that up to a subsequence $\tilde{v}_n(\cdot +\tilde{y}_n) \rightarrow \tilde{v}\neq 0$ a.e. in $\mathbb{R}^3$.

Set
$$\tilde{\rho} :=\limsup_{n \rightarrow \infty} \sup_{\tilde{y} \in \mathbb{R}^3} \int_{B_1(\tilde{y})} |\tilde{v}_n|^2.$$
 If $\tilde{\rho}=0$, then $\tilde{v}_n \rightarrow 0$ in $L^{\frac{16}{3}}(\mathbb{R}^3)$ by \cite[Lemma 1.21]{Willem}. It is easy to see that
 $$\int_{\mathbb{R}^3}|v_n(x)|^{\frac{16}{3}}dx=\int_{\mathbb{R}^3}|\tilde{v}_n(t_nx)|^{\frac{16}{3}}dx=t_n^{-3}\int_{\mathbb{R}^3}|\tilde{v}_n(x)|^{\frac{16}{3}}dx \rightarrow 0.$$
Similar to \eqref{eq02-12}, we get $\int_{\mathbb{R}^3}\tilde{H}(v_n) \rightarrow 0$. At the same time, $P(v_n)=0$ gives us
$$\int_{\mathbb{R}^3} |\nabla v_n|^2 +
5\int_{\mathbb{R}^3} |v_n|^2 |\nabla v_n|^2 =\frac{3}{2} \int_{\mathbb{R}^3} \tilde{H}(v_n) \rightarrow 0.$$
In view of Lemma \ref{lem2.2}, we obtain
$$0=P(v_n) \geq \frac{1}{2} \left(\int_{\mathbb{R}^3} |\nabla v_n|^2 +\int_{\mathbb{R}^3} |v_n|^2 |\nabla v_n|^2\right),$$
this is impossible. Therefore, $\tilde{\rho}>0$, which implies that there exists a sequence  $\{\tilde{y}_n\} \subset \mathbb{R}^3$ and $\tilde{v} \in H^1(\mathbb{R}^3)$ such that up to a subsequence $\tilde{v}_n(\cdot +\tilde{y}_n) \rightarrow \tilde{v}\neq 0$ a.e. in $\mathbb{R}^3$.

\textbf{Claim 2:}
$\limsup_{n \rightarrow \infty}s(\tilde{v}_n) \leq C.$

By Claim 1, we can let $\tilde{z}_n:=\tilde{v}_n(\cdot +\tilde{y}_n) \rightarrow \tilde{v} \neq 0$. Suppose that up to a subsequence $s(\tilde{v}_n) \rightarrow +\infty$ as $n \rightarrow \infty$. Hence, it follows from Lemma \ref{lem2.4}(3) that
$$s(\tilde{z}_n)=s(\tilde{v}_n(\cdot +\tilde{y}_n))=s(\tilde{v}_n) \rightarrow + \infty.$$
From $(h_4)$, Lemma \ref{lem2.1} and the Fatou's lemma, we get
$$\lim_{n \rightarrow +\infty} e^{-8s(\tilde{z}_n)}\int_{\mathbb{R}^3}H\left(e^{\frac{3}{2}s(\tilde{z}_n)}\tilde{z}_n\right)= +\infty.$$
As a result,
\begin{align*}
0&\leq e^{-5s(\tilde{z}_n)} I(s(\tilde{z}_n)*\tilde{z}_n) \\
&\leq  \frac{1}{2}e^{-3s(\tilde{z}_n)} \int_{\mathbb{R}^3} |\nabla \tilde{z}_n|^2 + \int_{\mathbb{R}^3} |\tilde{z}_n|^2 |\nabla \tilde{z}_n|^2 -e^{-8s(\tilde{z}_n)}\int_{\mathbb{R}^3}H\left(e^{\frac{3}{2}s(\tilde{z}_n)}\tilde{z}_n\right)  \\
&\rightarrow -\infty,
\end{align*}
as $n \rightarrow \infty$, a contradiction. The proof of Claim 2 is complete.

It is easy to see that
$$\int_{\mathbb{R}^3} |\nabla (s(\tilde{v}_n)*v_n)|^2 \leq C,$$
 $$\int_{\mathbb{R}^3} |s(\tilde{v}_n)*v_n|^2 |\nabla (s(\tilde{v}_n)*v_n)|^2 \leq C,$$
$$\int_{\mathbb{R}^3}H(s(\tilde{v}_n)*v_n) \leq C.$$
As a result,
$$m(a) \leq m(a_n) +o_n(1),$$
to wit,
$$m(a) \leq \liminf_{n \rightarrow \infty}m(a_n).$$
\end{proof}

\begin{lemma}\label{lem6.1}
Suppose $(h_1)-(h_7)$ hold, then $m(a) \rightarrow 0^+$ as $a \rightarrow +\infty$.
\end{lemma}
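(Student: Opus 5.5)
Positivity of $m(a)$ for every $a$ is immediate from the argument of Lemma \ref{lem2.5}(1) (with $\mu=0$), so it remains to show $\limsup_{a\to+\infty} m(a)\le 0$. The plan is to construct, for each large $a$, an explicit element of $\mathcal{P}'(a)$ with small energy. Fix $u\in \mathcal{S}'(1)\cap L^\infty(\mathbb{R}^3)$, radial, nonnegative and compactly supported, and set $u_a:=a u$, so $u_a\in\mathcal{S}'(a)$. By Lemma \ref{lem2.4} with $\mu=0$ there is a unique $s_a:=s(u_a)$ with $s_a*u_a\in\mathcal{P}'(a)$. Then
\[
m(a)\le I(s_a*u_a)=\max_{s\in\mathbb{R}} I(s*u_a)\le \max_{s\in\mathbb{R}}\Big(\tfrac{a^2}{2}e^{2s}A+a^4e^{5s}B-e^{-3s}\int_{\mathbb{R}^3}H\big(ae^{\frac32 s}u\big)\Big),
\]
where $A=\|\nabla u\|_2^2$ and $B=\int|u|^2|\nabla u|^2$. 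The task is to show that this maximum tends to $0$ as $a\to+\infty$.

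The key input is $(h_7)$. Together with Lemma \ref{lem2.1}, which gives $H(t)\ge \frac{3}{16}h(t)t$, it yields $H(t)/|t|^6\to+\infty$ as $t\to0$. Write $\tau:=ae^{\frac32 s}$, so that $e^{s}=(\tau/a)^{2/3}$. In terms of $\tau$ the three terms become
\[
\tfrac12 A\,a^{2/3}\tau^{4/3},\qquad B\,a^{2/3}\tau^{10/3},\qquad a^{2}\tau^{-2}\int_{\mathbb{R}^3} H(\tau u).
\]
Hence
\[
g_a(\tau):=a^{2/3}\Big[\tfrac12 A\tau^{4/3}+B\tau^{10/3}-a^{4/3}\tau^{-2}\int_{\mathbb{R}^3}H(\tau u)\Big],
\]
and we need $\sup_{\tau>0}g_a(\tau)\to0$.

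To bound the supremum, I would split into two ranges of $\tau$.

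\emph{Range $\tau\ge\delta$}, for a fixed small $\delta$. By Lemma \ref{lem2.1} and $(h_4)$, $H>0$ and $H(\tau u)/\tau^{16/3}$ is nondecreasing in $\tau$. So the term $a^{4/3}\tau^{-2}\int H(\tau u)$ dominates the positive terms as soon as $a^{4/3}$ is large, and there $g_a<0$. Concretely, $\int H(\tau u)\ge c_\delta\tau^{16/3}$ for $\tau\ge\delta$ with $c_\delta>0$. This gives
\[
g_a\le a^{2/3}\tau^{4/3}\Big(\tfrac12A+B\tau^2-a^{4/3}c_\delta\tau^{2}\Big),
\]
which is negative for $\tau\ge\delta$ once $a^{4/3}c_\delta\delta^2>\frac12A+B\delta^2$.

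\emph{Range $\tau<\delta$.} Choose $\delta$ so small that $H(t)\ge M|t|^6$ for $|t|\le\delta\|u\|_\infty$, with $M$ large. Then
\[
g_a(\tau)\le a^{2/3}\Big[\big(\tfrac12A+B\delta^2\big)\tau^{4/3}-a^{4/3}M\|u\|_6^6\,\tau^{4}\Big].
\]
Writing $K=\frac12A+B\delta^2$ and maximizing $K\tau^{4/3}-L\tau^4$ with $L=a^{4/3}M\|u\|_6^6$ gives a maximum of order $K^{3/2}L^{-1/2}=CK^{3/2}a^{-2/3}(M\|u\|_6^6)^{-1/2}$. Multiplying by $a^{2/3}$ leaves a quantity of order $K^{3/2}M^{-1/2}$, which is bounded but not yet small.

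This is where $(h_7)$ is really used, and it is the main obstacle. The fix is to let $\delta\to0$, so that $M=M(\delta)\to\infty$, while $a\to\infty$ fast enough that the first range is still handled with this $\delta$. Since $K\le \frac12A+B$ stays bounded, this gives $\sup_\tau g_a(\tau)\le C M(\delta)^{-1/2}$ for all $a\ge a_0(\delta)$. Hence $\limsup_{a\to\infty}m(a)\le CM(\delta)^{-1/2}\to0$. Care is needed to check that the choice $H(t)\ge M|t|^6$ on $|t|\le\delta\|u\|_\infty$ indeed follows from $(h_7)$ via Lemma \ref{lem2.1}; this is the reason $u\in L^\infty$ is required. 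Combined with $m(a)>0$, this proves $m(a)\to0^+$.
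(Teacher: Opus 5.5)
Your argument is correct once one misquoted inequality is fixed, and it reaches the result by a somewhat different route from the paper.

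\textbf{The misquoted step.} Lemma \ref{lem2.1} gives $h(t)t>\frac{16}{3}H(t)$, that is, $H(t)<\frac{3}{16}h(t)t$. This is an upper bound on $H$, so it cannot yield $H(t)/|t|^6\to+\infty$ as $t\to0$.

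The lower bound you need is still available in either of two ways:
\begin{itemize}
\item Integrate $(h_7)$ directly. For any $M>0$ there is $\delta_0>0$ with $h(s)\ge 6Ms^5$ for $0<s\le\delta_0$. Hence $H(t)=\int_0^t h(s)\,ds\ge M|t|^6$ for $0<|t|\le\delta_0$, using that $H$ is even by $(h_1)$.
\item Use $(h_6)$, which gives $H(t)>\frac16 h(t)t$. This is essentially what the paper does: it combines $(h_6)$, $(h_7)$ and Lemma \ref{lem2.1} to get $\tilde H(t)\ge\frac{10}{3}H(t)\ge\frac59 h(t)t\ge M|t|^6$ near $0$.
\end{itemize}
With this fix, the rest of your proof checks out. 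The bound $g_a<0$ on $\tau\ge\delta$ for $a\ge a_0(\delta)$ is correct. So is the one-variable computation $\sup_{\tau>0}(K\tau^{4/3}-L\tau^4)=CK^{3/2}L^{-1/2}$, which after multiplying by $a^{2/3}$ becomes independent of $a$. The order of limits is also right: first $a\to\infty$ with $\delta$ fixed, then $\delta\to0$.

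\textbf{Comparison with the paper.} The paper never estimates the whole fiber. It works at the Pohozaev point $s_a$ and proceeds in three steps:
\begin{itemize}
\item It writes $P(s_a*u_a)=0$ as $\tau_a^{-2}A+5B=\frac32 a^{4/3}\int_{\mathbb{R}^3}\frac{\tilde H(\tau_a u)}{|\tau_a u|^{16/3}}|u|^{16/3}$, with $\tau_a=ae^{\frac32 s_a}$, and uses $(h_5')$ to force $\tau_a\to0$.
\item It uses $\tilde H(t)\ge M|t|^6$ near $0$ to show $ae^{s_a}\to0$ by contradiction.
\item It drops the $H$-term, which is positive, to conclude $m(a)\le\frac12 a^2e^{2s_a}A+a^4e^{5s_a}B\to0$.
\end{itemize}
Your two ranges of $\tau$ play exactly the roles of the first two steps. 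You replace the Pohozaev identity and the contradiction arguments with direct upper bounds on $g_a$. These use only lower bounds on $H$: monotonicity of $H(t)/|t|^{16/3}$ and $H\ge M|t|^6$. You never need $\tilde H$.

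What each buys:
\begin{itemize}
\item Your version is self-contained and slightly more explicit: $\sup_\tau g_a\le C M(\delta)^{-1/2}$ for all $a\ge a_0(\delta)$.
\item The paper's version also gives the behaviour of the optimal dilation. Namely $e^{s_a}\to0$ and $ae^{s_a}\to0$, so both gradient terms at the Pohozaev point vanish.
\end{itemize}
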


\begin{proof}
Fix $u \in \mathcal{S}'(1) \cap L^{\infty}(\mathbb{R}^3)$, set $u_a:=au \in \mathcal{S}'(a)$ for any $a>1$. By Lemma \ref{lem2.4}(1), there exists a unique $s_a \in \mathbb{R}$ such that $s_a*u_a \in \mathcal{P}'(a)$. And by means of Lemma \ref{lem2.1} and Remark \ref{rem3.5}, we know that
\begin{align*}
0<&\; m(a)\leq I(s_a*u_a)  \\
=&\ \frac{1}{2}e^{2s_a} \int_{\mathbb{R}^3} |\nabla u_a|^2 + e^{5s_a}\int_{\mathbb{R}^3} |u_a|^2 |\nabla u_a|^2 -e^{-3s_a}\int_{\mathbb{R}^3}  H\left(e^{\frac{3}{2}s_a}u_a\right)   \\
\leq&\ \frac{1}{2}a^2e^{2s_a} \int_{\mathbb{R}^3} |\nabla u|^2 + a^4e^{5s_a}\int_{\mathbb{R}^3} |u|^2 |\nabla u|^2.
\end{align*}
Next, we prove that
$$\lim_{a \rightarrow + \infty}ae^{s_a}=\lim_{a \rightarrow + \infty}ae^{\frac{5}{4}s_a}=0.$$
Indeed, by $s_a*u_a \in \mathcal{P}'(a)$, we see that
$$a^2e^{2s_a} \int_{\mathbb{R}^3} |\nabla u|^2 + 5a^4e^{5s_a}\int_{\mathbb{R}^3} |u|^2 |\nabla u|^2 =\frac{3}{2}e^{-3s_a}\int_{\mathbb{R}^3}  \tilde{H}\left(e^{\frac{3}{2}s_a}au\right).$$
And it follows from $(h'_5)$ that
\begin{align*}
&\ a^{-2}e^{-3s_a} \int_{\mathbb{R}^3} |\nabla u|^2 + 5\int_{\mathbb{R}^3} |u|^2 |\nabla u|^2  \\
=&\ \frac{3}{2}a^{-4}e^{-8s_a}\int_{\mathbb{R}^3}  \tilde{H}\left(e^{\frac{3}{2}s_a}au\right)  \\
=&\ \frac{3}{2}a^\frac{4}{3}\int_{\mathbb{R}^3}  \frac{\tilde{H}\left(e^{\frac{3}{2}s_a}au\right)}{|e^{\frac{3}{2}s_a}au|^{\frac{16}{3}}} |u|^{\frac{16}{3}} \\
\geq &\ \frac{3}{2}a^\frac{4}{3} \frac{\tilde{H}\left(e^{\frac{3}{2}s_a}a\frac{\|u\|_\infty}{2}\right)}{|e^{\frac{3}{2}s_a}a\frac{\|u\|_\infty}{2}|^{\frac{16}{3}}} \int_{|u|\geq \frac{\|u\|_\infty}{2}}  |u|^{\frac{16}{3}}.
\end{align*}
If $\limsup_{a \rightarrow + \infty}ae^{\frac{3}{2}s_a}=+\infty$ \,or\, $\limsup_{a \rightarrow + \infty}ae^{\frac{3}{2}s_a}=C_1>0$, we can derive a contradiction. As a result,
$$\lim_{a \rightarrow + \infty}ae^{\frac{3}{2}s_a}=0 \;\;\mbox{and}\;\; \lim_{a \rightarrow + \infty}e^{s_a}=0.$$
Assume $\limsup_{a \rightarrow + \infty}ae^{s_a}=C_2>0$, taking into account $(h_6)$, $(h_7)$ and Lemma \ref{lem2.1}, we can deduce that for $M>C_2^{-4}\frac{\int_{\mathbb{R}^3} |\nabla u|^2}{\int_{\mathbb{R}^3} |u|^{6}}>0$, there exists a $\delta>0$ such that as $|t|\leq \delta$, we have
$$\tilde{H}(t)\geq \frac{10}{3}H(t)\geq \frac{5}{9}h(t)t \geq M|t|^{6}.$$
Therefore, for $a$ satisfying $|e^{\frac{3}{2}s_a}au|\leq \delta$, there holds
\begin{align*}
&\ a^{-3}e^{-3s_a} \int_{\mathbb{R}^3} |\nabla u|^2 + 5a^{-1}\int_{\mathbb{R}^3} |u|^2 |\nabla u|^2
= \frac{3}{2}a^{-5}e^{-8s_a}\int_{\mathbb{R}^3}  \tilde{H}\left(e^{\frac{3}{2}s_a}au\right)  \\
\geq&\ \frac{3}{2}a^{-5}e^{-8s_a} M|e^{\frac{3}{2}s_a}a|^{6} \int_{\mathbb{R}^3}|u|^{6} \\
=&\ \frac{3}{2}Mae^{s_a}\int_{\mathbb{R}^3}|u|^{6},
\end{align*}
which implies
$$C_2^{-3}\int_{\mathbb{R}^3} |\nabla u|^2 \geq \frac{3}{2}MC_2\int_{\mathbb{R}^3}|u|^{6}.$$
Therefore,
$$\frac{2}{3}C_2^{-4}\frac{\int_{\mathbb{R}^3} |\nabla u|^2}{\int_{\mathbb{R}^3} |u|^{6}}\geq M.$$
This is impossible. And if $\limsup_{a \rightarrow + \infty}ae^{s_a}=+\infty$, we can derive a contradiction. Thereby,
$$\lim_{a \rightarrow + \infty}ae^{s_a}=0.$$
And it follows from $\lim_{a \rightarrow + \infty}e^{s_a}=0$ that
$$\lim_{a \rightarrow + \infty}ae^{\frac{5}{4}s_a}=0.$$
It is clear that $m(a) \rightarrow 0^+$ as $a \rightarrow +\infty$.

\end{proof}

\begin{lemma}\label{lem6.2}
    Assume $h$ satisfies $(h_1)-(h_4)$ and $(h_5')$, then $m(a) \rightarrow +\infty$ as $a \rightarrow 0^+$.
\end{lemma}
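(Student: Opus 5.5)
We will show that on $\mathcal{P}'(a)$ the quantity $\int_{\mathbb{R}^3}|\nabla u|^2+\int_{\mathbb{R}^3}|u|^2|\nabla u|^2$ is bounded below by a constant that blows up as $a\to0^+$. We then convert this into a lower bound for $I$.

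\textbf{Step 1 (lower bound on the energy on $\mathcal{P}'(a)$).} For $u\in\mathcal{P}'(a)$, write $X:=\int_{\mathbb{R}^3}|\nabla u|^2$ and $Y:=\int_{\mathbb{R}^3}|u|^2|\nabla u|^2$. Using $P(u)=0$ and Lemma \ref{lem2.1}, exactly as in the proof of Lemma \ref{lem2.5}(3) with $\mu=0$, we get $5\int H(u)<X+5Y$. Hence
$$I(u)>\tfrac{3}{10}X .$$
This alone is not enough, so we also keep $Y$. We write $I(u)=\tfrac{1}{5}P(u)+\tfrac{3}{10}X+\tfrac{3}{10}\int[h(u)u-\tfrac{16}{3}H(u)]$, and the last integral is nonnegative by Lemma \ref{lem2.1}. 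Since this still loses $Y$, the plan is to prove instead that $X+Y\to+\infty$ uniformly on $\mathcal{P}'(a)$ as $a\to0^+$. We then recover $I$ by the scaling trick of Lemma \ref{lem2.5}(1): for $u\in\mathcal{P}'(a)$, choose $s$ with $e^{-2s}X+e^{-5s}Y=\zeta/2$. Then $I(u)\ge I((-s)*u)\ge \zeta/8$. This only gives a constant, so the scaling must be refined as in Step 3.

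\textbf{Step 2 (the constraint forces $X+Y$ large).} Using \eqref{eq2-3} and the Gagliardo--Nirenberg inequality \eqref{GNI} with $q=\frac{16}{3}$ and $q=6$, we bound
$$\int|\tilde H(u)|\le \varepsilon C a^{2/3}\,Y+C_\varepsilon C\,a^{3/5}Y^{6/5}.$$
For the $q=6$ term, \eqref{GNI} gives exponents $\frac{12-6}{10}\cdot 2=\frac{6}{5}$ on $a$ and $\frac{12}{10}$ on $Y$, and we may also use Sobolev. The precise powers are to be checked. The key feature is that every term carries a positive power of $a$ and is superlinear in $Y$, or linear in $Y$ with a small coefficient. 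Then $P(u)=0$ gives
$$X+5Y\le \tfrac32\int\tilde H(u)\le C\bigl(a^{2/3}Y+a^{6/5}Y^{6/5}\bigr).$$
For $a$ small, the linear term is absorbed into the left side, leaving $Y\le Ca^{6/5}Y^{6/5}$. Therefore $Y\ge C a^{-6}\to+\infty$.

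\textbf{Step 3 (from large $Y$ to large $I$).} For $u\in\mathcal{P}'(a)$ and any $s\le0$, Lemma \ref{lem2.4}(1) gives $I(u)\ge I(s*u)$, and $I(s*u)\ge \frac{e^{2s}}{2}X+e^{5s}Y-e^{-3s}\int H(e^{3s/2}u)$. We estimate $\int H$ as in Step 2, with the scaled quantities $e^{5s}Y$ and $e^{2s}X$. We choose $s$ so that $e^{5s}Y=Ka^{-\beta}$, with $K,\beta>0$ tuned so that the nonlinear part is at most $\frac12 e^{5s}Y$. This is possible because after scaling the bound reads $C(a^{2/3}e^{5s}Y+a^{6/5}(e^{5s}Y)^{6/5})$, and $a^{6/5}(e^{5s}Y)^{1/5}$ is small provided $e^{5s}Y\ll a^{-6}$. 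We take $e^{5s}Y=a^{-3}$, which satisfies $s\le0$ because $Y\gtrsim a^{-6}$ by Step 2. This yields $I(u)\ge\frac12 a^{-3}$ uniformly on $\mathcal{P}'(a)$, so $m(a)\to+\infty$.

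The main obstacle is getting the exponents in \eqref{GNI} right so that the $a$-powers really are positive. If the $|u|^6$ term cannot be controlled by $Y$ with a positive power of $a$, I would instead control it by $X^3$ via Sobolev and run a two-variable version of the same absorption argument.
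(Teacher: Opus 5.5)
Your proof is correct. It is a quantitative variant of the paper's argument rather than a copy of it.

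\textbf{Common core.} Both proofs rest on the same fact: for $u\in\mathcal{P}'(a)$, $I$ is maximized along the fiber at $s=0$, so $I(u)\ge I(s*u)$ for every $s$.

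\textbf{The paper's route.} It argues by sequences.
\begin{itemize}
\item It dilates $u_n\in\mathcal{P}'(a_n)$ to $v_n=(-s_n)*u_n$ with $\int|\nabla v_n|^2+\int|v_n|^2|\nabla v_n|^2=1$.
\item It uses \eqref{GNI} only to get $\|v_n\|_{16/3}\to0$. The $|t|^6$ part of $H$ is handled by Sobolev with an arbitrary $\varepsilon$.
\item It deduces that at each fixed scale $s>0$ the nonlinear term vanishes, so $I(u_n)\ge\frac12e^{2s}+o_n(1)$, and then lets $s\to\infty$.
\end{itemize}

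\textbf{Your route.} You apply \eqref{GNI} at both $q=\frac{16}{3}$ and $q=6$. Writing $Y_w=\int|w|^2|\nabla w|^2$, this gives the uniform bound $\bigl|\int H(w)\bigr|\le C\bigl(a^{4/3}Y_w+a^{6/5}Y_w^{6/5}\bigr)$ for all $w\in\mathcal{S}'(a)$. You then evaluate at the single $a$-dependent scale $e^{5s}Y=a^{-3}$. This avoids the $\varepsilon$-splitting and the subsequence argument. It also gives an explicit rate, $m(a)\ge\frac12a^{-3}$ for small $a$, which the paper's argument does not. Taking $e^{5s}Y=\kappa a^{-6}$ with $\kappa$ small even yields $m(a)\gtrsim a^{-6}$.

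\textbf{Simplifications and a slip.}
\begin{itemize}
\item Step 1 is a dead end and can be deleted.
\item Step 2 is unnecessary. Since $P(u)=0$ means $s(u)=0$, Lemma \ref{lem2.4}(1) (used with $\mu=0$, as the paper does) gives $I(u)=\max_{s\in\mathbb{R}}I(s*u)$. So you do not need $s\le0$, and the lower bound $Y\gtrsim a^{-6}$ plays no role.
\item For $q=\frac{16}{3}$ the mass factor in \eqref{GNI} is $(a^2)^{2/3}=a^{4/3}$, not $a^{2/3}$. This is harmless, since all you need is a positive power of $a$, and \eqref{GNI} always provides one because $q<12$. The ``main obstacle'' you flag is therefore not an obstacle.
\end{itemize}
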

\begin{proof}
It is sufficient to show that for any sequence $\{u_n\} \subset H^1(\mathbb{R}^3)\setminus \{0\}$ such that $\lim_{n \rightarrow \infty}\|u_n\|_2=0$ and $P(u_n)=0$, one has $I(u_n) \rightarrow +\infty$ as $n \rightarrow \infty$.

Take $s_n$ such that
$$e^{-2s_n} \int_{\mathbb{R}^3} |\nabla u_n|^2 + e^{-5s_n}\int_{\mathbb{R}^3} |u_n|^2 |\nabla u_n|^2=1.$$
Set $v_n:=(-s_n)*u_n$, it is clear that
$$ \int_{\mathbb{R}^3} |\nabla v_n|^2+\int_{\mathbb{R}^3} |v_n|^2 |\nabla v_n|^2=1,\;  \int_{\mathbb{R}^3} |v_n|^2= \int_{\mathbb{R}^3} |u_n|^2 \rightarrow 0\;\; \mbox{and}\;\; s(v_n)=s_n.$$
And by \eqref{GNI}, we obtain $\int_{\mathbb{R}^3} |v_n|^{\frac{16}{3}} \rightarrow 0$. Further, using the similar approach to \eqref{eq02-12}, we conclude that
\begin{equation*}
    \lim_{n \rightarrow \infty} e^{-3s}\int_{\mathbb{R}^3} H\Big(e^{\frac{3}{2}s}v_n\Big)=0 \;\; \mbox{for} \;\; \mbox{any}\;\; s>0.
\end{equation*}
Since $P(s(v_n)*v_n)=P(u_n)=0$, we derive that for $s>0$,
\begin{align*}
    &\ I(s(v_n)*v_n) \geq I(s*v_n) \\
      =&\ \frac{1}{2}e^{2s} \int_{\mathbb{R}^3} |\nabla v_n|^2 + e^{5s}\int_{\mathbb{R}^3} |v_n|^2 |\nabla v_n|^2 -e^{-3s}\int_{\mathbb{R}^3}  H\left(e^{\frac{3}{2}s}v_n\right) \\
      \geq&\ \frac{1}{2}e^{2s} \left(\int_{\mathbb{R}^3} |\nabla v_n|^2 +\int_{\mathbb{R}^3} |v_n|^2 |\nabla v_n|^2\right) +o_n(1) \\
      =&\ \frac{1}{2}e^{2s} +o_n(1).
\end{align*}
As $s>0$ is arbitrary, it is clear that $I(u_n)=I(s(v_n)*v_n) \rightarrow +\infty$ as $n \rightarrow \infty$.
\end{proof}

\begin{proof}[Proof of Theorem \ref{Thm1.5}]
Owing to the fact $\{u \in \mathcal{S}'(a)\, |\, \Theta(u,\lambda)=0\} \subset \mathcal{P}'(a)$, we conclude $m(a) \leq \sigma(a).$ Together with Remark \ref{rem3.5}, Lemma \ref{lem6.3}$-$Lemma \ref{lem6.2}, we can complete the proof of Theorem \ref{Thm1.5}.
\end{proof}

\section*{Acknowledgements}
This paper is partially supported by the National Natural Science Foundation of China (No. 11571200) and the Natural Science Foundation of Shandong Province (No. ZR2021MA062).

\section*{Conflict of Interest Statement}
The authors declare that they have no conflict of interest.

\section*{Data Availability Statement}
The manuscript has no associated data.

\end{document}